 \theoremstyle{plain}
 \newtheorem{thm1}{Theorem}
 \newtheorem{cor1}[thm1]{Corollary}
\newtheorem{thm}{Theorem}[section]
\newtheorem{lemma}[thm]{Lemma}
\newtheorem{prop}[thm]{Proposition}
\newtheorem{cor}[thm]{Corollary}
\theoremstyle{definition}
\newtheorem{remark}[thm]{Remark}
\newtheorem{convention}[thm]{Convention}
\numberwithin{equation}{section}
\def\sD{\mathsf{D}}
\def\sE{\mathsf{E}}
\def\sF{\mathsf{F}}
\def\sG{\mathsf{G}}
\def\cL{\mathcal{L}}
\def \cP{\mathcal{P}}
\newcommand{\pperp}{\perp\hspace{-0.15cm}\perp}
\newcommand{\ssT}{\mathsf{T}}
\newcommand{\ssN}{\mathsf{N}}
\def\EE{\mathbb{E}}
\def\FF{\mathbb{F}}
\def\JJ{\mathbb{J}}
\def\KK{\mathbb{K}}
\def\LL{\mathbb{L}}
\def\J{\mathbb{J}}
\DeclareMathOperator\kar{\mathrm{char}}
\def\<{\langle}
\def\>{\rangle}
\renewcommand{\@makefnmark}{\mbox{\textsuperscript{}}}
\title{Automorphisms and opposition in spherical buildings of exceptional type, II: Moufang hexagons}
\author{James Parkinson\footnote{This work was partially supported by the Australian Research Council Discovery Project~DP200100712.} 
\and
Hendrik Van Maldeghem}
\date{\today}
\begin{document}

\maketitle

\begin{abstract}
We classify the automorphisms of a Moufang hexagon mapping no chamber to an opposite chamber (such automorphisms are called \textit{domestic}). This forms part of a larger program to classify domestic automorphisms of Moufang spherical buildings.
\end{abstract}

\begin{center}
\textit{Dedicated to Jacques Tits}
\end{center}

\section*{Introduction}

An automorphism of a spherical building is called \textit{domestic} if it maps no chamber to an opposite chamber. Recently a systematic investigation of domestic automorphisms has revealed a beautiful connection between domesticity and large rich fixed element structures of the automorphism, and there are now complete classifications of the domestic automorphisms for various classes of spherical buildings. For example, by \cite{HVM:12} the domestic dualities of $\sE_6(\FF)$ buildings with $|\FF|>2$ are precisely the polarities that fix a split building of type $\sF_4$, and by \cite{HVM:13} the domestic trialities of thick $\sD_4$ buildings are precisely the automorphisms fixing a split building of type $\sG_2$. Moreover, in~\cite{PVM:19b,PVM:21} we classified the domestic automorphisms of split spherical buildings of types $\sE_6$, $\sF_4$, and $\sG_2$, as well as providing partial classifications in the $\sE_7$ and $\sE_8$ cases. 

The case of rank~$2$ spherical buildings (equivalently, generalised polygons) is complicated by the lack of classification of such buildings, which makes a complete classification of domestic automorphisms of arbitrary generalised polygons impossible. However \textit{Moufang} generalised polygons have been classified by Tits and Weiss~\cite{TW:02}. In the case of Moufang hexagons the classification was announced in~\cite{Tit:76}, with the complete proof appearing in~\cite{TW:02}. The classification is given in terms of \textit{hexagonal systems} (which in turn were classified by Petersson and Racine~\cite{PR:86}). In this paper we give the complete classification of domestic automorphisms of Moufang hexagons.

It is easy to see that no duality of a generalised hexagon is domestic, and so we restrict attention to collineations (that is, type preserving automorphisms). A nontrivial collineation can be domestic for one of three reasons: either it maps no point to an opposite point (\textit{point-domestic}), or no line to an opposite line (\textit{line-domestic}), or it maps both points and lines to opposite points and lines yet maps no chamber (that is, incident point-line pair) to an opposite chamber (\textit{exceptional domestic}). As a (very) special case of a result of Abramenko and Brown~\cite{AB:09} the first two possibilities are mutually exclusive. 

\goodbreak

Our main theorem is as follows. By convention we fix the duality class of Moufang hexagons so that if $\Gamma$ is associated to the hexagonal system $(\JJ,\FF,\#)$ then the points on a line are indexed by $\{\infty\}\cup\FF$ and the lines through a point are indexed by $\{\infty\}\cup\JJ$. For example, if $\Gamma$ is finite then it has parameters $(s,t)=(|\FF|,|\JJ|)$.

\begin{thm1}\label{thm:main}
Let $\Gamma$ be a Moufang hexagon, with the above convention on the duality class. 
\begin{compactenum}[$(1)$]
\item $\Gamma$ admits a unique class of nontrivial line-domestic collineations (the long root elations).
\item $\Gamma$ admits a nontrivial point-domestic collineation if and only if $\Gamma$ is a dual split Cayley hexagon, a mixed hexagon, or a triality hexagon of type ${^3}\sD_4$. Moreover, if $\theta$ is a nontrivial point-domestic collineation then $\theta$ has order $3$ and:
\begin{compactenum}[$(a)$]
\item if $\Gamma$ is either mixed, or is a dual split Cayley hexagon over a field of characteristic~$3$, then $\theta$ is a short root elation and there is a unique class of such collineations. 
\item if $\Gamma$ is a dual split Cayley hexagon over a field $\FF$ with $\kar\FF\neq 3$ then $\theta$ fixes an ovoid (respectively, a large full subhexagon) if $X^2+X+1$ is irreducible (respectively, reducible) over $\FF$, and in each case there is a unique class of such collineations. 
\item if $\Gamma$ is a triality hexagon of type ${^3}\sD_4$ then $\theta$ fixes a large full subhexagon, and $\theta$ is conjugate to a nontrivial element of the Galois group of the associated cubic Galois extension~$\EE/\FF$.
\end{compactenum}
\item $\Gamma$ admits an exceptional domestic collineation if and only if $\Gamma$ is a dual split Cayley hexagon over $\FF=\FF_2$ or the triality hexagon of type ${^3}\sD_4$ associated to a cubic extension of $\FF=\FF_2$. Moreover, for each of these hexagons there exists a unique class of exceptional domestic collineations, and these collineations have order~$4$.
\end{compactenum}
\end{thm1}

In \cite{PVM:19a,PVM:19b} we developed the language of opposition diagrams for automorphisms of spherical buildings. With the above convention on duality classes, the line-domestic collineations are those with opposition diagram $\sG_{2;1}^2=\begin{tikzpicture}[scale=0.5,baseline=-0.5ex]
%\node at (-1,0) {1};
%\node at (1,0) {6};
\node [inner sep=0.8pt,outer sep=0.8pt] at (-0.5,0) (2) {$\bullet$};
\node [inner sep=0.8pt,outer sep=0.8pt] at (0.5,0) (3) {$\bullet$};
\phantom{\draw [line width=0.5pt,line cap=round,rounded corners] (2.north west)  rectangle (2.south east);}
\phantom{\draw [line width=0.5pt,line cap=round,rounded corners] (3.north west)  rectangle (3.south east);}
\draw (-0.5,0)--(0.5,0);
\draw (-0.5,0.11)--(0.5,0.11);
\draw (-0.5,-0.11)--(0.5,-0.11);
\draw (0+0.15,0.3) -- (0-0.08,0) -- (0+0.15,-0.3);%arrow
%\draw [line width=0.5pt,line cap=round,rounded corners] (2.north west)  rectangle (2.south east);
\draw [line width=0.5pt,line cap=round,rounded corners] (3.north west)  rectangle (3.south east);
\end{tikzpicture}$, and the point-domestic collineations are those with opposition diagram~$\sG_{2;1}^1=\begin{tikzpicture}[scale=0.5,baseline=-0.5ex]
%\node at (-1,0) {1};
%\node at (1,0) {6};
\node [inner sep=0.8pt,outer sep=0.8pt] at (-0.5,0) (2) {$\bullet$};
\node [inner sep=0.8pt,outer sep=0.8pt] at (0.5,0) (3) {$\bullet$};
\phantom{\draw [line width=0.5pt,line cap=round,rounded corners] (2.north west)  rectangle (2.south east);}
\phantom{\draw [line width=0.5pt,line cap=round,rounded corners] (3.north west)  rectangle (3.south east);}
\draw (-0.5,0)--(0.5,0);
\draw (-0.5,0.11)--(0.5,0.11);
\draw (-0.5,-0.11)--(0.5,-0.11);
\draw (0+0.15,0.3) -- (0-0.08,0) -- (0+0.15,-0.3);%arrow
\draw [line width=0.5pt,line cap=round,rounded corners] (2.north west)  rectangle (2.south east);
%\draw [line width=0.5pt,line cap=round,rounded corners] (3.north west)  rectangle (3.south east);
\end{tikzpicture}$ (see \cite{PVM:21} for the notation). The above theorem immediately gives:

\begin{cor1}\label{cor:main}
Let $\Gamma$ be a Moufang hexagon, with the above convention on duality classes.
\begin{compactenum}[$(1)$]
\item There exists a collineation with opposition diagram $\sG_{2;1}^2$.
\item There exists a collineation with opposition diagram $\sG_{2;1}^1$ if and only if $\Gamma$ is a dual split Cayley hexagon, a mixed hexagon, or a triality hexagon of type ${^3}\sD_4$.
\item The hexagon $\Gamma$ admits a domestic collineation not fixing a chamber if and only if $\Gamma$  is a dual split Cayley hexagon over a field $\FF$ with $X^2+X+1$ irreducible over~$\FF$.
\end{compactenum}
\end{cor1}

The structure of this paper is as follows. In Section~\ref{sec:background} we outline background material and definitions on generalised hexagons, domesticity, and hexagonal systems. We also extend the coordinatisation of dual split Cayley hexagons and triality hexagons from \cite[Chapter~3]{HVM:98} to general Moufang hexagons (see Theorem~\ref{thm:coordinates}). By Theorem~\ref{thm:PTM3}, a collineation of a generalised hexagon is point-domestic if and only if its fixed element structure is either a ball of radius $3$ in the incidence graph centred at a line, a large full subhexagon, or an ovoid (and dually for line-domestic collineations). These three possibilities, and their duals, are each analysed in Sections~\ref{sec:elations}, \ref{sec:subhex}, and~\ref{sec:ovoids}, culminating in the proof of Theorem~\ref{thm:main} in Section~\ref{sec:main}.

\newpage

\section{Background and definitions}\label{sec:background}

In this section we begin by giving some basic definitions concerning generalised hexagons, and recalling results from the literature on domestic automorphisms of generalised hexagons. In Section~\ref{sec:Moufang} we recall the classification of Moufang hexagons in terms of hexagonal systems, following \cite{TW:02}, and record commutation relations and related formulae that will be used repeatedly throughout the paper. In Section~\ref{sec:coordinates} we extend the coordinatisation of dual split Cayley hexagons and triality hexagons from \cite[Chapter~3]{HVM:98} to general Moufang hexagons (this coordinatisation will be used in Section~\ref{sec:subhex}).

\subsection{Generalised hexagons}\label{sec:GH}

A \textit{generalised hexagon} is a nonempty point line geometry $\Gamma=(\mathcal{P},\mathcal{L})$ containing no ordinary $2$, $3$, $4$, or $5$-gon as a subgeometry such that any two elements $x,y\in\mathcal{P}\cup\mathcal{L}$ are contained in an ordinary hexagon. We will typically drop the adjective ``generalised'', and simply refer to generalised hexagons as \textit{hexagons}. 

The hexagon~$\Gamma$ is \textit{thick} if each line contains at least $3$ points, and each point is on at least $3$ lines. The \textit{distance} $d(x,y)$ between elements $x,y\in\mathcal{P}\cup\mathcal{L}$ is the distance in the incidence graph. Thus $d(x,y)\leq 6$, and elements $x,y$ are \textit{opposite} one another if and only if $d(x,y)=6$. If $x$ and $y$ are opposite then necessarily $x,y$ are either both points, or are both lines. For $p\in\cP$ let $p^{\perp}$ denote the set of all points collinear with~$p$, and write $p^{\pperp}$ for the set of all points that are not opposite~$p$.

An \textit{ovoid} of $\Gamma$ is a set $O$ of mutually opposite points such that every element $x\in\mathcal{P}\cup\mathcal{L}$ is at distance at most $3$ from some element of $O$. The dual notion of an ovoid is a \textit{spread}. 

A \textit{subhexagon} of $\Gamma$ is a subgeometry $\Gamma'$ that is itself a generalised hexagon. A subhexagon $\Gamma'$ is \textit{full} if every point of $\Gamma$ incident with a line of $\Gamma'$ belongs to $\Gamma'$, and \textit{large} if every element of $\Gamma$ is at distance at most $3$ from some element of $\Gamma'$. The dual notion to a full subhexagon is an \textit{ideal} subhexagon.

\subsection{Domestic automorphisms of generalised hexagons}\label{sec:Dom}

We now recall the known results concerning domesticity in hexagons. Firstly, it is easy to see that no duality of a thick hexagon is domestic (see \cite[Theorem~2.7]{PTM:15}). If $\theta$ is a domestic collineation of a thick hexagon~$\Gamma$ then there are three possibilities. If $\theta$ maps no point (respectively no line) to an opposite point (respectively line) then $\theta$ is called \textit{point-domestic} (respectively \textit{line-domestic}). The third possibility is that $\theta$ maps both points and lines to opposite points and lines, yet maps no chamber (that is, incident point-line pair) to an opposite. Such a collineation is called \textit{exceptional domestic}.

Exceptional domestic collineations are extremely rare, and have been completely classified for finite (that is $|\cP|,|\cL|<\infty$) thick hexagons.

\begin{thm}[{\cite[Corollary~2.11]{PTM:15}}]\label{thm:TMPex1}
If a finite thick (not necessarily Moufang) hexagon with parameters $(s,t)$ admits an exceptional domestic collineation then $(s,t)\in\{(2,2),(2,8),(8,2)\}$. Moreover, for each hexagon with these parameters there exists a unique exceptional domestic collineation up to conjugation, and these collineations have order~$4$.
\end{thm}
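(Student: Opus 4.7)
The plan is to use the Benson trace method for distance-regular graphs, specialised to the collinearity graph of a generalised hexagon, together with a geometric unpacking of the chamber non-opposition condition.

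First, for a collineation $\theta$ of a finite thick hexagon of order $(s,t)$, introduce $f_i$ = number of points $p$ with $d(p,\theta p)=2i$ for $0\leq i\leq 3$, and dual counts $g_i$ on lines. The collinearity graph of $\Gamma$ is distance-regular with spectrum computable from $(s,t)$ in the classical way, and the permutation character of $\theta$ on $\mathcal{P}$ decomposes against this spectrum to yield three non-trivial linear identities in $f_0,f_1,f_2,f_3$ in addition to $f_0+f_1+f_2+f_3 = (1+s)(1+st+s^2t^2)$. The coefficients depend only on $s,t$ and on the traces $\sigma_j$ of $\theta$ on each eigenspace; these traces are algebraic integers (integers when $st$ is a perfect square, otherwise Galois-conjugate pairs). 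A parallel system governs $g_0,\dots,g_3$ on the line side.

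Next I would translate exceptional domesticity geometrically. If $p$ is opposite $\theta p$ and $L\ni p$, then $d(L,\theta L)\in\{4,6\}$, and the chamber $\{p,L\}$ is opposite its image if and only if $d(L,\theta L)=6$, i.e.\ if and only if $\theta L\neq\mathrm{proj}_{\theta p}L$. Hence exceptional domesticity forces $\theta L=\mathrm{proj}_{\theta p}L$ for \emph{every} $L\ni p$; dually, if $L$ is opposite $\theta L$ then $\theta p=\mathrm{proj}_{\theta L}p$ for every $p\in L$. This rigidity produces an extra identity: the number of chambers $(p,L)$ with $p$ opposite $\theta p$ \emph{and} $L$ opposite $\theta L$ vanishes, which couples $f_3$ with $g_3$ through the projection bijections between opposite pencils.

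The main step is the arithmetic elimination. Using the point-side and line-side Benson identities, the coupling identity above, and the strict positivity $f_3,g_3>0$ demanded by the exceptional case, together with the integrality constraints on the $\sigma_j$, one derives Diophantine conditions on $(s,t)$. I expect this to be the principal obstacle, because several systems must be solved simultaneously and one needs to exclude a large range of candidate $(s,t)$; in practice, one reduces modulo small moduli and exploits that the non-negative integer solutions $f_i,g_j$ can exist only for a sharply limited set of $(s,t)$, which one then shows equals $\{(2,2),(2,8),(8,2)\}$. A subsidiary subtlety is that in the non-square case $st$ is not a perfect square and the $\sigma_j$ occur in Galois-conjugate pairs, so one has to track rationality carefully to avoid losing constraints.

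Finally, for each allowed parameter pair the hexagon is determined up to duality (the split Cayley hexagon of order $(2,2)$, the triality hexagon ${}^3\sD_4(2)$ of order $(2,8)$, and its dual). Using the known structure of $G_2(2)$ and ${}^3\!D_4(2)$, I would exhibit an element of order $4$ as a candidate, verify with the displacement formulas of Step~1 that the geometric profile matches that of an exceptional domestic collineation, and then read off uniqueness up to conjugation and the value of the order from the conjugacy class / character data of the corresponding automorphism groups.
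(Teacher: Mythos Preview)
The paper does not contain a proof of this statement: Theorem~\ref{thm:TMPex1} is quoted verbatim from \cite[Corollary~2.11]{PTM:15} and used as input to the classification, so there is no ``paper's own proof'' to compare your proposal against. What you have sketched is, in broad outline, the strategy that \cite{PTM:15} actually follows --- eigenvalue/trace identities on the collinearity graph (what you call the Benson method), a translation of exceptional domesticity into local projection conditions that couple the point-side and line-side displacement spectra, a Diophantine elimination over the feasible parameters, and finally explicit verification in the surviving small groups.

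That said, your sketch is not yet a proof. The sentence ``I expect this to be the principal obstacle'' is an honest admission that the arithmetic elimination has not been carried out; in \cite{PTM:15} this step is substantial and requires more than congruences modulo small moduli --- one needs the full system of displacement equations for chambers (not just for points and lines separately) together with several auxiliary counting identities obtained by double-counting flags at prescribed mutual positions, and the integrality constraints alone do not suffice without these extra equations. Your ``coupling identity'' between $f_3$ and $g_3$ is stated but not derived, and as written it is not strong enough: exceptional domesticity constrains the \emph{local} behaviour at each opposite point or line, which yields a whole family of relations rather than a single global one. If you want to complete the argument independently, you will need to make those relations explicit and then actually perform the elimination; otherwise, citing \cite{PTM:15} as the present paper does is the appropriate course.
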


Moreover, the possibility of exceptional domestic collineations of infinite Moufang hexagons was also eliminated in~\cite{PTM:15} (see \cite{TW:02} for the definition of the Moufang condition).

\begin{thm}[{\cite[Theorem~2.14]{PTM:15}}]\label{thm:TMPex2} No infinite Moufang hexagon admits an exceptional domestic collineation. 
\end{thm}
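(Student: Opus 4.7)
The plan is to suppose, for a contradiction, that $\theta$ is an exceptional domestic collineation of an infinite Moufang hexagon $\Gamma$, and derive a contradiction by combining the rigidity forced by exceptional domesticity with the abundant transitivity available from infinite Moufang root groups.

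First I would set up the bookkeeping. Let
\[
\cP_{\mathrm{opp}}=\{p\in\cP:d(p,p^\theta)=6\},\qquad \cL_{\mathrm{opp}}=\{L\in\cL:d(L,L^\theta)=6\},
\]
both nonempty by hypothesis. Since two chambers of a hexagon are opposite precisely when their points are opposite and their lines are opposite, the exceptional domesticity of $\theta$ is equivalent to the statement that no element of $\cP_{\mathrm{opp}}$ is incident with any element of $\cL_{\mathrm{opp}}$.

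The crucial local rigidity then follows. For $L\in\cL_{\mathrm{opp}}$ and $p\in L$ one has $p^\theta\in L^\theta$ and $d(p,L^\theta)=5$, so $d(p,p^\theta)\in\{4,6\}$. Because $p\notin\cP_{\mathrm{opp}}$, the distance must be $4$, forcing $p^\theta=\proj_{L^\theta}p$. Hence $\theta$ restricted to the points of $L$ coincides with the projection bijection $\proj_{L^\theta}\colon L\to L^\theta$. Dually, for each $p\in\cP_{\mathrm{opp}}$, the map induced by $\theta$ on the lines through $p$ coincides with the projection $\proj_{p^\theta}$ onto the lines through $p^\theta$.

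The main step would exploit the Moufang structure. Fix $L\in\cL_{\mathrm{opp}}$ and $p'\in\cP_{\mathrm{opp}}$; by the incidence constraint we must have $d(p',L)\in\{3,5\}$. In either case I would build an apartment containing the geodesic between $p'$ and $L$, coordinatise via the associated hexagonal system $(\J,\FF,\#)$, and translate the two rigidities into functional equations on the coordinate expression of $\theta$. Because $\Gamma$ is infinite, at least one of $\FF$ or $\J$ is infinite, and hence the root groups of the fundamental apartment are infinite. Using a suitable root group element to perturb the configuration while preserving $L$ (respectively, the lines through $p'$), and exploiting that $\FF$ (or $\J$) has more than a few elements, one can produce a flag $(q,N)$ with $q\in\cP_{\mathrm{opp}}$ and $N\in\cL_{\mathrm{opp}}$, contradicting the incidence constraint.

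The main obstacle will be the precise coordinate calculation isolating which root group elements are compatible with the two rigidities and which are not. This is where infiniteness enters essentially: in the finite cases $(s,t)\in\{(2,2),(2,8),(8,2)\}$ of Theorem~\ref{thm:TMPex1} there are too few root group elements available to force the incompatibility, which is precisely why those hexagons \emph{do} admit exceptional domestic collineations. A cleaner variant of the argument would aim to show that any exceptional domestic collineation must fix a thick subhexagon with parameters $(2,2)$, $(2,8)$ or $(8,2)$ and then invoke Theorem~\ref{thm:TMPex1} directly; but establishing the existence of such a subhexagon still requires the root group analysis sketched above.
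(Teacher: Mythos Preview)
The paper does not prove this theorem at all: it is quoted verbatim from \cite[Theorem~2.14]{PTM:15} and used as a black box. There is therefore no proof in the present paper to compare your attempt against.

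As for the proposal itself, it is a plan rather than a proof, and the gap is exactly where you say it is. The opening observations are fine and standard: for $L\in\cL_{\mathrm{opp}}$ every point $p\in L$ satisfies $d(p,p^\theta)=4$, and dually for $p'\in\cP_{\mathrm{opp}}$. But the entire content of the theorem lies in the step you describe as ``using a suitable root group element to perturb the configuration\ldots one can produce a flag $(q,N)$ with $q\in\cP_{\mathrm{opp}}$ and $N\in\cL_{\mathrm{opp}}$''. You have not identified which root group to use, which element of it, or why the resulting flag lands in $\cP_{\mathrm{opp}}\times\cL_{\mathrm{opp}}$; you explicitly flag this as ``the main obstacle''. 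Without that calculation there is no argument---only an expectation that infiniteness ought to suffice. The actual proof in \cite{PTM:15} is considerably more intricate than a single perturbation: it develops a detailed combinatorial analysis of the possible local displacement patterns around points and lines in $\cP_{\mathrm{opp}}$ and $\cL_{\mathrm{opp}}$, derives numerical constraints that pin down the parameters, and then uses the Moufang classification case by case. Your alternative idea (forcing a finite subhexagon with parameters in $\{(2,2),(2,8),(8,2)\}$) is closer in spirit to what actually happens, but again the substance is in establishing that reduction, which you have not done.
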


Theorems~\ref{thm:TMPex1} and~\ref{thm:TMPex2} prove Theorem~\ref{thm:main}(3), and so the classification of domestic collineations of Moufang hexagons is reduced to the classification of collineations that are either point-domestic or line-domestic. The following theorem shows that such collineations are characterised by their fixed element structures. This gives an important guiding framework for our classification of domestic collineations of Moufang hexagons.

\begin{thm}[{\cite[Theorems~2.7 and 2.8]{PTM:15}}] \label{thm:PTM3} A nontrivial collineation $\theta$ of a thick generalised hexagon is point-domestic if and only if its fixed element structure is either a ball of radius $3$ in the incidence graph centred at a line, a large full subhexagon, or an ovoid. Dually, a nontrivial collineation $\theta$ of a thick generalised hexagon is line-domestic if and only if its fixed element structure is either a ball of radius $3$ in the incidence graph centred at a point, a large ideal subhexagon, or a spread.
\end{thm}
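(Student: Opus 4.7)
The plan is to prove both directions of the equivalence, with the line-domestic half following by dualising (ovoids becoming spreads, full subhexagons becoming ideal ones, and balls around lines becoming balls around points).

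For the forward direction, a triangle inequality suffices. Given a point $q$ of $\Gamma$, each of the three candidate fix structures contains a fixed point $x$ with $d(q,x)\le 2$: for the ovoid this follows from the definition combined with the parity of point-to-point distance; for a full subhexagon it follows by applying fullness to a fixed line at distance at most $3$ from $q$; for a ball of radius $3$ centred at a line $L$ one notes that either $q$ lies in the ball (hence is fixed) or $d(q,L)=5$, in which case the point at distance $3$ from $L$ on the unique $L$-to-$q$ geodesic is fixed and at distance $2$ from $q$. Then $d(q,\theta(q))\le 2d(q,x)\le 4<6$ and $\theta$ is point-domestic.

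For the reverse direction, the essential tool is a convexity lemma: if $x,y\in\Gamma^\theta$ satisfy $d(x,y)\le 5$ then the unique $x$-to-$y$ geodesic lies pointwise in $\Gamma^\theta$. The argument splits on whether $\Gamma^\theta$ contains a chamber. If no chamber is fixed, convexity forces any two fixed points (or any two fixed lines) at distance $2$ or $4$ to produce a fixed chamber, so pairs of distinct fixed points and of distinct fixed lines must all be opposite. A short argument then eliminates fixed lines altogether: if $L$ were fixed with no fixed chamber then $\theta$ would act fixed-point-freely on the points of $L$, and choosing $p\in L$, a line $M\ne L$ through $p$, and $r\in M\setminus\{p\}$ with $\theta(r)\ne r$ (available by thickness, since $\theta(M)\ne M$), the path $r,M,p,L,\theta(p),\theta(M),\theta(r)$ is a geodesic of length $6$ by girth $12$, making $r$ opposite $\theta(r)$ and violating point-domesticity. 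Hence $\Gamma^\theta$ reduces to a set of pairwise opposite fixed points.

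If a chamber is fixed, distinguish according to whether $\Gamma^\theta$ contains two opposite fixed lines. If not, fixed lines pairwise lie at distance at most $4$, and convexity combined with thickness and a variant of the six-step path argument above forces $\Gamma^\theta$ to coincide with a ball of radius $3$ around some fixed line. If two opposite fixed lines exist, convexity yields a fixed apartment and a standard bootstrap through fixed chambers in neighbouring apartments produces a subhexagon $\Gamma'$; fullness of $\Gamma'$ follows by rerunning the six-step path argument on its fixed lines, and largeness from the displacement bound $d(q,\theta(q))\le 4$ applied to arbitrary points $q$ of $\Gamma$. The main obstacle is the ovoid verification in the chamber-free case: showing that a set of pairwise opposite fixed points actually dominates the whole hexagon at distance $3$ requires a careful displacement analysis using $d(q,\theta(q))\in\{0,2,4\}$ together with uniqueness of short geodesics to produce a fixed point within distance $3$ of an arbitrary $q$. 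This is the only step where point-domesticity is used essentially beyond the convexity lemma.
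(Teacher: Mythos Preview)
This theorem is not proved in the present paper: it is quoted from \cite[Theorems~2.7 and 2.8]{PTM:15} and used as a black box for the classification, so there is no proof here to compare your proposal against.

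Taken on its own, your forward direction is correct and the triangle-inequality argument is clean. Your case split for the reverse direction (no fixed chamber versus fixed chamber; within the latter, no pair of opposite fixed lines versus such a pair) matches the natural structure of the problem, and the six-step path argument you give correctly shows that any fixed line has all its points fixed, so fullness is automatic once there is a fixed line. Where the proposal is thin is in three places you partly acknowledge. First, the ovoid covering property is flagged but not argued, and it is genuinely the crux: one must show that for every point $q$ with $d(q,\theta(q))\in\{2,4\}$ a fixed point appears within distance~$2$ of~$q$, which requires an orbit analysis on the geodesic from $q$ to $\theta(q)$ and is not a one-line observation. Second, the ball-of-radius-$3$ identification in the no-opposite-fixed-lines subcase needs an argument both for the existence of a single central line and for why the fixed set does not extend beyond radius~$3$. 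Third, largeness of the fixed subhexagon is asserted from $d(q,\theta(q))\le 4$, but one still has to produce a fixed element near~$q$, which is close in spirit to the ovoid step and does not follow immediately from the displacement bound alone. None of these gaps is fatal to the strategy, but each is where the real work of the cited reference lies.
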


\subsection{Moufang hexagons and hexagonal systems}\label{sec:Moufang}

Let $\Gamma=(\mathcal{P},\mathcal{L})$ be a Moufang hexagon. Let $A_0$ be a fixed choice of apartment (an ordinary hexagon), and let $C_0=\{p_0,L_0\}$ be a fixed choice chamber of $A_0$. Let $G=G(\Gamma)$ denote the full collineation group of $\Gamma$. Let $B$ denote the stabiliser of $C_0$, and let $N$ denote the (set-wise) stabiliser of $A_0$. Then $(B,N)$ is a split $BN$-pair in $G$ with Weyl group $W=\langle s_1,s_6\mid s_1^2=s_6^2=(s_1s_6)^6=e\rangle$ the dihedral group of order~$12$ (see \cite[(33.4)]{TW:02}). Let $U_1,\ldots,U_{12}$ denote the root subgroups associated to $A_0$ (as in \cite{TW:02}), and let $U=\langle U_1,\ldots,U_6\rangle$ denote the subgroup of $G$ generated by the positive root subgroups. Then $B=H\ltimes U$, where $H=B\cap N$ is the element-wise stabiliser of $A_0$. Let $w_0=s_1s_6s_1s_6s_1s_6=s_6s_1s_6s_1s_6s_1$ denote the longest element of~$W$.

The Moufang hexagons are determined (up to isomorphism) by the commutator relations that hold amongst the root subgroups in $U$. In \cite{TW:02} this classiciation is given in terms of the following algebraic structures. An \textit{hexagonal system} is a triple $(\mathbb{J},\mathbb{F},\#)$, where $\mathbb{F}$ is a commutative field, $\mathbb{J}$ is a vector space over $\mathbb{F}$, and $\#:\mathbb{J}\to\mathbb{J}$ is a function called the \textit{adjoint} satisfying various axioms (see \cite[(15.15)]{TW:02}). There is a unique element $1\in \JJ\backslash\{0\}$ with $1^{\#}=1$ and we identify $\mathbb{F}$ with the subset $\{t1\mid t\in\mathbb{F}\}$ of $\mathbb{J}$. The map $\#$ determines a function $\ssN:\mathbb{J}\to\mathbb{F}$ (called the \textit{norm}), a symmetric bilinear form $\ssT:\mathbb{J}\times\mathbb{J}\to\mathbb{F}$ (called the \textit{trace}), and a symmetric bilinear map $\times:\mathbb{J}\times\mathbb{J}\to\mathbb{J}$. These maps satisfy various properties, including the following (for $a,b,c\in\JJ$ and $t\in\FF$; see \cite[(15.15), (30.4)]{TW:02}): $\ssN(1)=1$, $\ssT(1)=3$, $(ta)^{\#}=t^2a^{\#}$, $\ssN(ta)=t^3\ssN(a)$, $a\times a=2a^{\#}$,
\begin{align*}
(a+b)^{\#}&=a^{\#}+(a\times b)+b^{\#},&\ssT(a\times b,c)&=\ssT(a,b\times c),&\ssT(a,a^{\#})&=3\ssN(a)\\
a&=\ssT(a)-1\times a&\ssN(a^{\#})&=\ssN(a)^2&a^{\#\#}&=\ssN(a)a.
\end{align*}

The simplest examples are the hexagonal systems $(\FF,\FF,\#)$ with $\FF$ any field and $a^{\#}=a^2$ (and then $\ssN(a)=a^3$, $\ssT(a,b)=3ab$, and $a\times b=2ab$), and the hexagonal systems $(\EE,\FF,\#)$ where $\EE/\FF$ is a separable cubic extension and $a^{\#}=a^{\sigma}a^{\sigma^2}$ with $\sigma$ a nontrivial element of $\mathrm{Gal}(\LL/\FF)$ where $\LL/\FF$ is the normal closure of $\EE/\FF$ (and then $\ssN(a)=aa^{\sigma}a^{\sigma^2}$, $\ssT(a,b)=ab+a^{\sigma}b^{\sigma}+a^{\sigma^2}b^{\sigma^2}$, and $a\times b=a^{\sigma}b^{\sigma^2}+a^{\sigma^2}b^{\sigma}$). The complete list of hexagonal systems is given in~\cite[(15.14)]{TW:02}.

By a \textit{subhexagonal system} of $(\JJ,\FF,\#)$ we shall mean a triple $(\JJ',\FF,\#)$ with $\JJ'$ a subspace of $\JJ$ closed under $\#$ (such systems are called \textit{substructures} in \cite{TW:02}). 

By the classification of Tits and Weiss~\cite[Theorem 17.5]{TW:02} every Moufang hexagon arises from an hexagonal system $(\mathbb{J},\mathbb{F},\#)$ via the construction in~\cite[(16.8)]{TW:02}. The nontrivial commutator relations amongst the groups $U_1,\ldots,U_6$ are as follows (where $[g,h]=g^{-1}h^{-1}gh$ and $a,b\in \mathbb{J}$ and $t,u\in\mathbb{F}$; see \cite[(16.8)]{TW:02})
\begin{align}
\label{eq:commutator1}[x_1(a),x_3(b)]&=x_2(\ssT(a,b))\\
\label{eq:commutator2}[x_3(a),x_5(b)]&=x_4(\ssT(a,b))\\
\label{eq:commutator3}[x_1(a),x_5(b)]&=x_2(-\ssT(a^{\#},b))x_3(a\times b)x_4(\ssT(a,b^{\#}))\\
\label{eq:commutator4}[x_2(t),x_6(u)]&=x_4(tu)\\
\label{eq:commutator5}[x_1(a),x_6(t)]&=x_2(-t\ssN(a))x_3(ta^{\#})x_4(t^2\ssN(a))x_5(-ta).
\end{align}
We have $U_i=\langle x_i(a)\mid a\in \JJ\rangle$ if $i$ is odd, and $U_i=\langle x_i(t)\mid t\in \FF\rangle$ if $i$ is even. Each $U_i$ is abelian and $x_i(a)x_i(b)=x_i(a+b)$ (for $a,b\in\JJ$ if $i$ is odd, and $a,b\in\FF$ if $i$ is even). Note that $U_4$ is central in $U$, and that $U_3$ is central in $U$ if and only if the bilinear form $\ssT(\cdot,\cdot)$ is identically zero.

It will be convenient to divide Moufang hexagons into the following classes.
\medskip

\begin{compactenum}[($\mathrm{H}$1)]
\item The \textit{$\sG_2$-hexagons} associated to the Chevalley group $\sG_2(\FF)$ with $\kar\FF\neq 3$. These are associated to hexagonal systems $(\FF,\FF,\#)$ of type $1/\FF$ from \cite[(15.20)]{TW:02} with $\kar\FF\neq 3$, where $a^{\#}=a^2$ for all $a\in\FF$.
\item The \textit{$\sD_4$-hexagons} associated to Tits indices ${^3\mathsf{D_{4,2}^2}}$ and ${^6\mathsf{D_{4,2}^2}}$. These correspond to the hexagonal systems $(\EE,\FF,\#)$ of type $3/\FF$ from \cite[(15.21)]{TW:02} with $\EE/\FF$ a separable cubic extension (normal for ${^3}\sD_{4,2}^2$ and not normal for ${^6}\sD_{4,2}^2$), and $a^{\#}=a^{\sigma}a^{\sigma^2}$ with $\sigma$ a nontrivial element of $\mathrm{Gal}(\LL/\FF)$, with $\LL/\FF$ the normal closure of $\EE/\FF$. We shall abbreviate the notation ${^3}\sD_{4,2}^2$ to ${^3}\sD_4$ and ${^6}\sD_{4,2}^2$ to ${^6}\sD_4$.
\item The type \textit{$\sE$-hexagons} associated to Tits indices ${^1\mathsf{E_{6,2}^{16}}}$, ${^2\mathsf{E_{6,2}^{16''}}}$, and $\mathsf{E_{8,2}^{78}}$. These correspond to the hexagonal systems of type $9/\FF$ (for ${^1\mathsf{E_{6,2}^{16}}}$), $9\KK/\FF$ (for ${^2\mathsf{E_{6,2}^{16''}}}$), $27/\FF$, and $27\KK/\FF$ from \cite[(15.22), (15.29), (15.31), (15.34)]{TW:02}. We will recall some basic properties of hexagonal systems of type $9/\FF$ and $9\KK/\FF$ in Section~\ref{sec:ovoids}. It will turn out that we do not require any detailed information on systems of type $27/\FF$ or $27\KK/\FF$.
\item The \textit{mixed hexagons} associated to the hexagonal systems $(\EE,\FF,\#)$ where $\kar\FF=3$ and either $\EE=\FF$ or $\EE/\FF$ is a (necessarily purely inseparable) field extension with $\EE^3\subseteq \FF\subseteq\EE$, where $a^{\#}=a^2$ for all $a\in\EE$. These correspond to the hexagonal systems of type $1/\FF$ from \cite[(15.20)]{TW:02} with $\kar\FF=3$ (note that this class includes hexagons associated to the Chevalley group $\sG_2(\FF)$ with $\kar\FF=3$).
\end{compactenum}
\medskip

\noindent From the classification, if $(\JJ,\FF,\#)$ is an hexagonal system not in class (H4) then $\dim\JJ\in\{1,3,9,27\}$. We note that in classes (H1), (H2) and (H4) the vector space $\JJ$ has the additional algebraic structure of a field. Moreover, in an hexagonal system $(\JJ,\FF,\#)$ of type $9/\FF$ the vector space $\JJ$ has the structure of a (noncommutative) cyclic division algebra of degree three with centre~$\FF$. %For the remaining hexagonal systems there is less algebraic structure, and it is not possible to ``multiply'' arbitrary elements of $\JJ$ together.

We shall fix the duality class of a Moufang hexagon throughout this paper as follows.

\begin{convention}\label{conv:duality}
 If $\Gamma$ is associated to the hexagonal system~$(\JJ,\FF,\#)$ then the points on a line are indexed by $\{\infty\}\cup \FF$, and the lines through a point are indexed by~$\{\infty\}\cup\JJ$. 
 \end{convention}

Thus, in particular, class (H1) consists precisely of the dual split Cayley hexagons over fields of characteristic different from~$3$, while the dual split Cayley hexagons over fields of characteristic~$3$ are a subset of class~(H4). We refer to the root subgroups $U_i$ with $i$ odd (respectively $i$ even) as the short (respectively long) root subgroups. The short (respectively long) \textit{root elations} are the conjugates of elements of $U_i$ with $i$ odd (respectively $i$ even).

For $a\in\JJ\backslash\{0\}$ let
$$
a^{-1}=\ssN(a)^{-1}a^{\#}.
$$
Following \cite[(32.12)]{TW:02}, for $a\in\JJ\backslash\{0\}$ and $t\in\FF\backslash\{0\}$ let
\begin{align*}
s_1(a)=x_7(a^{-1})x_1(a)x_7(a^{-1})\quad\text{and}\quad 
s_6(t)=x_{12}(t^{-1})x_6(t)x_{12}(t^{-1}),
\end{align*}
and so $s_1(a)\mapsto s_1$ and $s_6(t)\mapsto s_6$ under the homomorphism $N\to N/H=W$. In particular $Hs_1(a)=s_1(a)H=s_1(1)H$ and $Hs_6(t)=s_6(t)H=s_6(1)H$ for all $a\in\JJ\backslash\{0\}$ and $t\in\FF\backslash\{0\}$. We will sometimes write $s_i$ in place of $s_i(1)$, $i=1,6$, when there is no risk of ambiguity, however as we show below the elements $s_i(1)$ are not involutions (unless $\kar\FF=2$), and instead they have order~$4$.

Writing $g^h=hgh^{-1}$ we have (see \cite[(29.35), (32.12)]{TW:02}; note that our convention for $g^h$ differs from \cite{TW:02}, and is chosen as we often need to move a $s_i(\cdot)$ past an $x_j(\cdot)$ term from left to right -- thus the formulae from \cite{TW:02} have been modified accordingly):
\begin{align*}
x_1(b)^{s_1(a)}&=x_7(\ssT(a^{-1},b)a^{-1}-(a^{-1})^{\#}\times b)&x_1(a)^{s_6(t)}&=x_5(ta)\\
x_2(t)^{s_1(a)}&=x_6(t/\ssN(a))&x_2(u)^{s_6(t)}&=x_4(-tu)\\
x_3(b)^{s_1(a)}&=x_5(-a^{-1}\times b+\ssN(a)^{-1}\ssT(a,b)a)&x_3(a)^{s_6(t)}&=x_3(a)\\
x_4(t)^{s_1(a)}&=x_4(t)&x_4(u)^{s_6(t)}&=x_2(t^{-1}u)\\
x_5(b)^{s_1(a)}&=x_3(a\times b-\ssN(a)\ssT(a^{-1},b)a^{-1})&x_5(a)^{s_6(t)}&=x_1(-t^{-1}a)\\
x_6(t)^{s_1(a)}&=x_2(-t\ssN(a))&x_6(u)^{s_6(t)}&=x_{12}(t^{-2}u).
\end{align*}
In particular, note that $
s_i(1)x_j(a)s_i(1)^{-1}=x_{2i+6-j}(\epsilon_{ij}a)$ for $i\in\{1,6\}$ and $j\in\{1,2,3,4,5,6\}$ (with $a\in\JJ$ for $j$ odd and $a\in\FF$ for $j$ even), where $\epsilon_{ij}\in\{-1,1\}$ and the index $2i+6-j$ is read cyclically to lie between $1$ and~$12$. It is convenient to record the signs $\epsilon_{ij}$ in the following table (with $i$ indexing rows, and $j$ indexing columns):
\begin{align}\label{eq:signs}
\begin{array}{|c||c|c|c|c|c|c|}
\hline
\epsilon_{ij}& 1  & 2 & 3&4&5&6 \\
\hline\hline
1&1&1&1&1&-1&-1\\
\hline
6&1&-1&1&1&-1&1\\
\hline
\end{array}
\end{align}
It follows from~(\ref{eq:signs}) that $s_i^4x_j(a)s_i^{-4}=x_j(a)$ for $i\in\{1,6\}$ and $1\leq j\leq 6$, and so $s_1^4=s_6^4=1$. For example,
$$
s_1^4x_6(t)s_1^{-4}=s_1^3x_2(-t)s_1^{-3}=s_1^2x_6(-t)s_1^{-2}=s_1x_2(t)s_1^{-1}=x_6(t)
$$
(however note that $s_1^2x_6(t)s_1^{-2}=x_6(-t)$ and so $s_1^2\neq 1$ unless $\kar\FF=2$). 

We record some further formulae for later use. Using the definition of $s_1(a)$ and $s_6(t)$ we have (for $a\in\JJ$ and $t\in\FF$)
\begin{align}
\label{eq:1-fold}x_1(a)&=x_7(-a^{-1})s_1(a)x_7(-a^{-1})\\
\label{eq:6-fold}x_6(t)&=x_{12}(-t^{-1})s_6(t)x_{12}(-t^{-1}).
\end{align}
By \cite[(32.12)]{TW:02} we have
\begin{align}\label{eq:negativert}
\begin{aligned}
{[x_2(t),x_7(a)]}&=x_3(ta)x_4(-t^2\ssN(a))x_5(ta^{\#})x_6(-t\ssN(a))\\
[x_3(b),x_7(a)]&=x_4(-\ssT(a,b^{\#}))x_5(a\times b)x_6(-\ssT(a^{\#},b))\\
[x_5(b),x_7(a)]&=x_6(-\ssT(a,b))\\
[x_{12}(t),x_4(u)]&=x_2(tu)\\
[x_{12}(t),x_5(a)]&=x_1(-ta)x_2(-t^2\ssN(a))x_3(-ta^{\#})x_4(-t\ssN(a))
\end{aligned}
\end{align}
and $[x_i(c),x_7(a)]=1$ for $i\in \{4,6\}$, and $[x_{12}(t),x_i(c)]=1$ for $i\in\{1,2,3\}$. 

\subsection{Parabolic subgroups and coordinatisation}\label{sec:coordinates}

By Convention~\ref{conv:duality}, in the Dynkin diagram 
$\begin{tikzpicture}[scale=0.5,baseline=-0.5ex]
\node at (-1,0) {1};
\node at (1,0) {6};
\node [inner sep=0.8pt,outer sep=0.8pt] at (-0.5,0) (2) {$\bullet$};
\node [inner sep=0.8pt,outer sep=0.8pt] at (0.5,0) (3) {$\bullet$};
\phantom{\draw [line width=0.5pt,line cap=round,rounded corners] (2.north west)  rectangle (2.south east);}
\phantom{\draw [line width=0.5pt,line cap=round,rounded corners] (3.north west)  rectangle (3.south east);}
\draw (-0.5,0)--(0.5,0);
\draw (-0.5,0.11)--(0.5,0.11);
\draw (-0.5,-0.11)--(0.5,-0.11);
\draw (0+0.15,0.3) -- (0-0.08,0) -- (0+0.15,-0.3);%arrow
%\draw [line width=0.5pt,line cap=round,rounded corners] (2.north west)  rectangle (2.south east);
%\draw [line width=0.5pt,line cap=round,rounded corners] (3.north west)  rectangle (3.south east);
\end{tikzpicture}$ the points of $\Gamma$ are the type $6$ objects, and the lines of $\Gamma$ are the type $1$ objects (here $S=\{1,6\}$). Thus the points of $\Gamma$ are in bijection with the cosets $G/P_1$, where $P_1$ is the parabolic subgroup $P_1=B\cup Bs_1B$ (note: in the general building setup, the vertices of type $j$ correspond to the cosets of the parabolic subgroup $P_{S\backslash\{j\}}$, and in this case $S\backslash\{6\}=\{1\}$). Similarly the lines are in bijection with the cosets $G/P_6$ with $P_6=B\cup Bs_6B$. Thus the points of $\Gamma$ are 
\begin{center}
$P_1$,\quad $x_6(t)s_6P_1$,\quad $x_1(a)x_2(t)s_1s_6P_1$,\quad $x_6(t)x_5(a)x_4(t')s_6s_1s_6P_1$,\\
$x_1(a)x_2(t)x_3(a')x_4(t')s_1s_6s_1s_6P_1$,\quad $x_6(t)x_5(a)x_4(t')x_3(a')x_2(t'')s_6s_1s_6s_1s_6P_1$
\end{center}
with $t,t',t''\in \mathbb{F}$ and $a,a'\in\mathbb{J}$, and analogously for lines. 

Points $gP_1$ and $hP_1$ are at distance $0,2,4,6$ (in the incidence graph) if and only if
\begin{align}\label{eq:measuredistance}
P_1g^{-1}hP_1=P_1,\,P_1s_6P_1,\,P_1s_{616}P_1,\,P_1s_{61616}P_1,
\end{align}
respectively (where, for example, $s_{616}=s_6s_1s_6$). Dual statements apply for lines. The point $gP_1$ and the line $hP_6$ are incident if and only if $gP_1\cap hP_6\neq\emptyset$.

Abbreviating notation in the obvious way, the points of $\Gamma$ are given by all $n$-tuples ($1\leq n\leq 5$) in the sets $\FF$, $\JJ\times \FF$, $\FF\times \JJ\times \FF$, $\JJ\times \FF\times\JJ\times \FF$, $\FF\times \JJ\times \FF\times\JJ\times \FF$ together with a point labelled $(\infty)$ (corresponding to $P_1$). The lines are given by the $n$-tuples $(1\leq n\leq 5)$ in the sets $\JJ$, $\FF\times \JJ$, $\JJ\times \FF\times \JJ$, $\FF\times \JJ\times\FF\times \JJ$, $\JJ\times \FF\times \JJ\times\FF\times \JJ$, denoted with square brackets to distinguish from points, together with a line labelled $[\infty]$ (corresponding to $P_6$). This notation, along with the equations below determining the incidence relation, is called a \textit{coordinatisation} of $\Gamma$. The split Cayley hexagons and the triality hexagons are coordinatised in \cite[Chapter 3]{HVM:98}, and we extend this coordinatisation to general Moufang hexagons in the following theorem.

\begin{thm}\label{thm:coordinates}
Let $t,t',t'',u,u'\in\FF$ and $a,a',b,b',b''\in\JJ$. The incidence relation $*$ between points and lines is given by
\begin{align*}
&(t,a,t',a',t'')*[t,a,t',a']*(t,a,t')*[t,a]*(t)*[\infty]*\\
&(\infty)*[b]*(b,u)*[b,u,b']*(b,u,b',u')*[b,u,b',u',b''],
\end{align*}
and $(t,a,t',a',t'')*[b,u,b',u',b'']$ if and only if 
\begin{align}
\label{eq:system11}u&=t''+t\ssN(b)-\ssT(a',b)+\ssT(a,b^{\#})\\
\label{eq:system12}b'&=a'-(a\times b)-tb^{\#}\\
\label{eq:system13}u'&=t'+t^2\ssN(b)-tt''+t\ssT(a,b^{\#})+\ssT(a^{\#},b)-\ssT(a,a')\\
\label{eq:system14}b''&=a+tb.
\end{align}
if and only if 
\begin{align}
\label{eq:system21}a&=b''-tb\\
\label{eq:system22}t'&=u'+t^2\ssN(b)+ut-t\ssT(b'',b^{\#})+\ssT(b',b'')+\ssT(b''^{\#},b)\\
\label{eq:system23}a'&=b'+b\times b''-tb^{\#}\\
\label{eq:system24}t''&=u-t\ssN(b)+\ssT(b,b')+\ssT(b'',b^{\#}).
\end{align}
\end{thm}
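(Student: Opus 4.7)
The plan is a direct computation using the group-theoretic description of points and lines set out just before the theorem. Write $(t,a,t',a',t'') = gP_1$ and $[b,u,b',u',b''] = hP_6$ with
\[
g = x_6(t) x_5(a) x_4(t') x_3(a') x_2(t'') s_6 s_1 s_6 s_1 s_6 \quad\text{and}\quad h = x_1(b) x_2(u) x_3(b') x_4(u') x_5(b'') s_1 s_6 s_1 s_6 s_1.
\]
Since $(t,a,t',a',t'')$ is opposite $(\infty)$, the incidence condition $gP_1 \cap hP_6 \neq \emptyset$ forces $gP_1 = h x_6(s) s_6 P_1$ for some $s\in\FF$ (the remaining point $hP_1$ on the line is $(b,u,b',u')$, which sits at distance $4$ from $(\infty)$ and so cannot be opposite it).

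Applying the conjugation formulae of Section~\ref{sec:Moufang} step-by-step yields the commutation identity
\[
s_1 s_6 s_1 s_6 s_1 \cdot x_6(s) = x_6(s) \cdot s_1 s_6 s_1 s_6 s_1,
\]
so that $h x_6(s) s_6 = x_1(b) x_2(u) x_3(b') x_4(u') x_5(b'') x_6(s) \cdot w_0$ with $w_0 := s_1 s_6 s_1 s_6 s_1 s_6$. A small additional check gives $gP_1 = x_6(t) x_5(a) x_4(t') x_3(a') x_2(t'') \cdot w_0 P_1$ (writing $s_6 s_1 s_6 s_1 s_6 = w_0 s_1^{-1}$ and absorbing $s_1^{-1}$ into $P_1$ via $w_0^{-1} s_1 w_0 \in s_1 H \subseteq P_1$). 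Incidence is therefore equivalent to $w_0^{-1} v w_0 \in P_1$, where
\[
v = \bigl[x_1(b) x_2(u) x_3(b') x_4(u') x_5(b'') x_6(s)\bigr]^{-1} \bigl[x_6(t) x_5(a) x_4(t') x_3(a') x_2(t'')\bigr] \in U.
\]
Because $w_0$ carries each $U_i$ ($1 \le i \le 6$) onto $U_{i+6}$ and because $P_1 = B \cup B s_1 B$ contains $U_7$ but none of $U_8,\dots,U_{12}$, the condition reduces to $v \in U_1$. Equivalently, in the unique canonical factorisation $v = x_1(v_1) x_2(v_2) x_3(v_3) x_4(v_4) x_5(v_5) x_6(v_6)$ of an element of $U$, one must have $v_j = 0$ for $j \in \{2,3,4,5,6\}$.

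The remainder is a commutator computation. Among $U_1,\dots,U_6$ the only nontrivial commutators are $[x_1,x_3]$, $[x_3,x_5]$, $[x_2,x_6]$, $[x_1,x_5]$ and $[x_1,x_6]$ from (\ref{eq:commutator1})--(\ref{eq:commutator5}), and each produces correction terms in root groups of strictly intermediate index; so transporting the inverse factors $x_6(-s), x_5(-b''), x_4(-u'), x_3(-b'), x_2(-u), x_1(-b)$ rightward through $x_6(t) x_5(a) x_4(t') x_3(a') x_2(t'')$ is an upper-triangular rewriting that terminates. Each $v_j$ emerges as an explicit polynomial in the parameters built from $\ssN$, $\#$, $\ssT$, $\times$. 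Solving $v_6 = 0$ for $s$ as a linear function of the other data and substituting back, the four equations $v_5 = v_4 = v_3 = v_2 = 0$ produce, in this order, (\ref{eq:system14}), (\ref{eq:system13}), (\ref{eq:system12}), (\ref{eq:system11}). The second form (\ref{eq:system21})--(\ref{eq:system24}) is then obtained by a direct algebraic inversion of the first, using the identities $a \times a = 2a^{\#}$, $(a+b)^{\#} = a^{\#} + a \times b + b^{\#}$ and $\ssT(a \times b, c) = \ssT(a, b \times c)$ from Section~\ref{sec:Moufang}.

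The main obstacle is the bookkeeping. Pushing $x_1(-b)$ past $x_6(t)$ alone produces four correction terms via (\ref{eq:commutator5}), each of which must then be transported through the remaining $x_5(a), x_4(t'), x_3(a'), x_2(t'')$ and across the other inverse factors via (\ref{eq:commutator1})--(\ref{eq:commutator4}), generating a cascade of cross-terms. Collecting all contributions, tracking signs and repackaging the resulting $\ssT$, $\ssN$, $\#$ and $\times$ expressions into exactly the combinations prescribed by (\ref{eq:system11})--(\ref{eq:system14}) is where care is most needed; the triangularity of the root-group filtration is the essential feature that keeps the computation finite and combinatorially controlled.
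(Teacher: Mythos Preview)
Your approach is correct and essentially the same as the paper's: both reduce incidence to a commutator computation in $U$ and read off the four relations from the vanishing of the $U_2,U_3,U_4,U_5$ components of the relevant element. The only cosmetic difference is that you parametrise the extra degree of freedom by an auxiliary $s\in\FF$ (the $U_6$-coordinate of the chosen point on the line) and then eliminate it via $v_6=0$, whereas the paper writes $g_2^{-1}g_1$ directly in $U_6U_5U_4U_3U_2U_1$ form and simply leaves the $U_6$ and $U_1$ components unconstrained, arriving at the same four conditions $u_5=u_4=u_3=u_2=1$.
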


\begin{proof}
All relations are clear with the exception of the relation $(t,a,t',a',t'')*[b,u,b',u',b'']$. Writing $g_1=x_6(t)x_5(a)x_4(t')x_3(a')x_2(t'')$ and $g_2=x_1(b)x_2(u)x_3(b')x_4(u')x_5(b'')$, the point $(t,a,t',a',t'')=g_1w_0P_1$ is on the line $[b,u,b',u',b'']=g_2w_0P_6$ if and only if 
$$
w_0^{-1}g_2^{-1}g_1w_0P_1\cap P_6\neq\emptyset.
$$
Thus if $g_2^{-1}g_1=u_6u_5u_4u_3u_2u_1$ with $u_i\in U_i$ it follows that $(t,a,t',a',t'')*[b,u,b',u',b'']$ if and only if $u_5=u_4=u_3=u_2=1$. 

We have $g_2^{-1}g_1=x_5(-b'')x_4(-u')x_3(-b')x_2(-u)x_1(-b)x_6(t)x_5(a)x_4(t')x_3(a')x_2(t'')$, and we use the commutator relations to write this element in $U_6U_5U_4U_3U_2U_1$ form. Noting that $U_4$ is central in $U$, we shall, for convenience, move all $U_4$ terms temporarily to the far right during the working. Moving the $x_6(t)$ term to the left requires commutator relations~(\ref{eq:commutator5}) and (\ref{eq:commutator4}) to move the term past $x_1(-b)$ and $x_2(-u)$, and we obtain 
\begin{align*}
g_2^{-1}g_1&=x_6(t)x_5(-b'')x_3(-b')x_2(-u)x_1(-b)x_2(t\ssN(b))x_3(tb^{\#})x_5(a+tb)x_3(a')x_2(t'')x_4(z_1)
\end{align*}
with $z_1=t'-u'-t^2\ssN(b)-tu$. Now, since $U_2$ commutes with all root subgroups $U_i$ with $i\neq 6$, we shall temporarily move all $U_2$ terms to the right, and record them next to the $U_4$ term, giving
\begin{align*}
g_2^{-1}g_1&=x_6(t)x_5(-b'')x_3(-b')x_1(-b)x_3(tb^{\#})x_5(a+tb)x_3(a')x_2(y_1)x_4(z_1)
\end{align*}
where $y_1=t''-u+t\ssN(b)$. We now move the term $x_5(a+tb)$ to the left. Invoking commutator relation~(\ref{eq:commutator2}), followed by~(\ref{eq:commutator3}), and then~(\ref{eq:commutator2}) again, we obtain
\begin{align*}
g_2^{-1}g_1&=x_6(t)x_5(-b'')x_3(-b')x_1(-b)x_5(a+tb)x_3(tb^{\#})x_3(a')x_2(y_1)x_4(z_2)\\
&=x_6(t)x_5(-b'')x_3(-b')x_5(a+tb)x_1(-b)x_3(-b\times (a+tb))x_3(tb^{\#}+a')x_2(y_2)x_4(z_3)\\
&=x_6(t)x_5(a-b''+tb)x_3(-b')x_1(-b)x_3(tb^{\#}+a'-a\times b-t(b\times b))x_2(y_2)x_4(z_4)
\end{align*}
where $z_2=z_1+t\ssT(a,b^{\#})+t^2\ssT(b,b^{\#})$, $z_3=z_2-\ssT(b,(a+tb)^{\#})$, $y_2=y_1-\ssT(b^{\#},a+tb)$, and $z_4=z_3+\ssT(a+tb,-b')$. Since $b\times b=2b^{\#}$ the second coefficient of $x_3(\cdot)$ simplifies to $a'-a\times b-tb^{\#}$, and we then move the $x_3(a'-a\times b-tb^{\#})$ term past $x_1(-b)$, using~(\ref{eq:commutator1}), giving
\begin{align*}
g_2^{-1}g_1&=x_6(t)x_5(a-b''+tb)x_3(-b')x_3(a'-a\times b-tb^{\#})x_1(-b)x_2(y_3)x_4(z_4)\\
&=x_6(t)x_5(a-b''+tb)x_4(z_4)x_3(a'-b'-a\times b-tb^{\#})x_2(y_3)x_1(-b)
\end{align*}
with $y_3=y_2-\ssT(b,a'-a\times b-tb^{\#})$. This is now in $U_6U_5U_4U_3U_2U_1$ form, and after simplification we obtain $y_3=t''-u+t\ssN(b)+\ssT(a,b^{\#})-\ssT(a',b)$ and 
$$z_4=t'-u'-t^2\ssN(b)-tu-t\ssT(a,b^{\#})-\ssT(a^{\#},b)-\ssT(a,b')-t\ssT(b,b').
$$ 
Equations~(\ref{eq:system11}), (\ref{eq:system12}), and (\ref{eq:system14}) now follow (from the conditions $u_2=u_3=u_5=1$), and moreover $z_4=0$ (since $u_4=1$). Using~(\ref{eq:system11}) and~(\ref{eq:system12}) to eliminate $u$ and $b'$ from the equation $z_4=0$ yields~(\ref{eq:system13}). 

To derive the equations~(\ref{eq:system21})--(\ref{eq:system24}), reverse the order of the first set of equations and rearrange the expressions to give expressions for $a,t',a',t''$. Now substitute to find expressions for $a,t',a',t''$ in terms of $t,b,u',b',u',b''$. 
\end{proof}

Commutator relations are used extensively in this paper (in particular in Section~\ref{sec:ovoids}), and we shall often give less details than in the above proof. 

\section{Domestic collineations of Moufang Hexagons}\label{sec:bulk}

Recall from Theorem~\ref{thm:PTM3} that a collineation of a thick hexagon is point-domestic if and only if the fixed element structure of is either a ball of radius $3$ in the incidence graph centred at a line, a large full subhexagon, or an ovoid (and dually for line-domestic collineations). We consider each case in turn in Sections~\ref{sec:elations}, \ref{sec:subhex}, and~\ref{sec:ovoids}, culminating in the proof of Theorem~\ref{thm:main} in Section~\ref{sec:main}.

\subsection{Balls of radius three in the incidence graph}\label{sec:elations}

Let $\Gamma$ be a Moufang hexagon, with Convention~\ref{conv:duality} in force.

\begin{lemma}\label{lem:central}
A collineation $\theta$ of a Moufang hexagon fixes precisely a ball of radius $3$ in the incidence graph centred at a point if and only if $\theta$ is conjugate to $x_4(1)$. 
\end{lemma}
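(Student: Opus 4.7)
The plan is to prove both directions by combining the coordinatisation of Theorem~\ref{thm:coordinates} with the $s_i(\cdot)$-conjugation formulas of Section~\ref{sec:Moufang}, exploiting that $U_4$ is central in $U$.

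For the ``if'' direction, the centrality of $U_4$ in $U$ means that for any point coset $g w P_1$ with $g\in U$ we have
$$x_4(1)\cdot g w P_1 = g w \bigl(w^{-1}x_4(1)w\bigr) P_1,$$
so $x_4(1)$ fixes this point exactly when $w^{-1}x_4(1)w\in P_1$, and analogously for lines with $P_6$ replacing $P_1$. Iterating the conjugation formulas yields
$$s_6^{-1}x_4(1)s_6 = x_2(-1),\qquad s_1^{-1}x_2(-1)s_1 = x_6(1),\qquad s_6^{-1}x_6(1)s_6 = x_{12}(-1),$$
together with $s_1^{-1}x_4(1)s_1 = x_4(1)$. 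The first two conjugates lie in $B\subset P_1\cap P_6$, but the third lies in the negative root subgroup $U_{12}$ which is not contained in $P_1$. Tracing through the six coset representatives for points and the six for lines, this shows that the points $(\infty),(t),(b,u)$ and the lines $[\infty],[b],[t,a],[b,u,b']$ are all fixed, and that for example $(0,0,0)$ is not. Combining this with the convexity of fixed structures of building automorphisms and with Theorem~\ref{thm:PTM3} (noting that $x_4(1)$ is a long root elation, hence line-domestic), the fixed structure of $x_4(1)$ is exactly the ball of radius~$3$ centred at $(\infty)$; conjugation of $x_4(1)$ by any $g\in G$ then gives the ball centred at $g\cdot(\infty)$.

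For the ``only if'' direction, suppose $\theta$ fixes precisely such a ball centred at a point $p$. Conjugating in $G$ we may assume $p=(\infty)$, so $\theta$ fixes the chamber $\{(\infty),[\infty]\}$ and therefore $\theta\in B=H\ltimes U$. Write $\theta=h\cdot x_1(a_1)x_2(t_2)x_3(a_3)x_4(t_4)x_5(a_5)x_6(t_6)$. The translation action $x_1(c)\cdot[b]=[b+c]$ shows that fixity of every line $[b]$ through $(\infty)$ forces $a_1=0$ together with trivial action of $h$ on the pencil $\{[b]\}$; dually, fixity of every point $(t)$ on $[\infty]$ forces $t_6=0$ and trivial action of $h$ on $\{(t)\}$. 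Imposing then the fixity of the points $(b,u)$ and the lines $[t,a]$, $[b,u,b']$, and propagating these conditions through the commutator relations \eqref{eq:commutator1}--\eqref{eq:commutator5} and the $s_i$-conjugation formulas, one eliminates $x_2(t_2)$, $x_3(a_3)$, and $x_5(a_5)$ in turn; the accumulated constraints on $h$ force its action on each root group to be trivial, so $h=1$. This leaves $\theta = x_4(t_4)$ for some $t_4\in\FF$, and nontriviality of $\theta$ (imposed by the ``precisely'' hypothesis) gives $t_4\neq 0$, whence conjugation by a suitable torus element scaling $U_4$ yields $\theta\sim x_4(1)$.

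The main obstacle is the bookkeeping in the second paragraph: the nontrivial commutators $[x_2(t),x_6(u)]=x_4(tu)$ and $[x_1(a),x_5(b)] = x_2(-\ssT(a^{\#},b))x_3(a\times b)x_4(\ssT(a,b^{\#}))$ couple the six $U_i$-components, so the fixity conditions must be imposed in an order that isolates one factor at a time, moving outward from $C_0$ through successive shells of the ball. This mirrors the organisation of the system of equations in the proof of Theorem~\ref{thm:coordinates}, and once the conditions are processed systematically the reduction to $\theta\in U_4$ falls out cleanly.
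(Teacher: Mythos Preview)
Your approach is essentially the paper's: conjugate so the centre is $(\infty)$, write $\theta\in B=H\ltimes U$ as $h\cdot x_1(a_1)\cdots x_6(t_6)$, and peel off the $U_i$-factors by imposing fixity on successive shells of the ball. The one difference worth flagging is how $h$ is eliminated. The paper works at the chamber level and, after obtaining $h\in C(U_1)\cap C(U_6)$ from fixity of the chambers $x_1(z)s_1B$ and $x_6(u)s_6B$, immediately invokes \cite[Corollary~1.8.5]{HVM:98} (an element of $H$ centralising both $U_1$ and $U_6$ is the identity) to conclude $h=1$; only then does it eliminate $t_2,a_5,a_3$ via the chambers $x_1(0)x_2(u)s_1s_6B$, $x_6(0)x_5(z)s_6s_1B$, and $x_1(0)x_2(0)x_3(z)s_1s_6s_1B$. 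Your plan of accumulating $h\in C(U_i)$ for all $i$ from the full list of ball-fixity conditions also works (an $h\in H$ centralising all of $U$ fixes every apartment through $C_0$ pointwise, hence is trivial), but the paper's shortcut spares you from carrying $h$ through the later commutator computations and makes the remaining eliminations one-variable-at-a-time. For the ``if'' direction the paper simply declares the verification routine; your appeal to Theorem~\ref{thm:PTM3} is fine provided you note that the fixed structure cannot be a spread (points are fixed) nor a large ideal subhexagon (a ball of radius~$3$ is a tree, not a hexagon).
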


\begin{proof}
Suppose that $\theta$ fixes a ball of radius $3$ centred at a point. After conjugating, we may assume that the centre of the fixed ball is the point $P_1$ (in the notation of Section~\ref{sec:coordinates}). In the $BN$-pair language, the hypothesis of the lemma gives that $\theta$ fixes each chamber $gB$ with $g\in B\cup Bs_1B\cup Bs_6B\cup Bs_1s_6B\cup Bs_6s_1B\cup Bs_1s_6s_1B$. In particular, $\theta B=B$, giving $\theta\in B$. Thus $\theta=hu$ with $h\in H$ and $u\in U$. Write 
$
u=x_1(a)x_2(t)x_3(a')x_4(t')x_5(a'')x_6(t'')
$ with $a,a',a''\in\JJ$ and $t,t',t''\in \FF$. For each $z\in\JJ$ the chamber $x_1(z)s_1B$ is fixed, and so by commutator relations $x_1(z)s_1B=\theta x_1(z)s_1B=hx_1(a+z)s_1B$, giving $hx_1(a+z)h^{-1}=x_1(z)$ for all $z\in\JJ$. Taking $z=0$ gives $x_1(a)=h^{-1}x_1(0)h=1$ and so $a=0$, and then $hx_1(z)h^{-1}=x_1(z)$ for all $z\in\JJ$ gives $h\in C(U_1)$ (the centraliser of $U_1$). 

Similarly we have $x_6(u)s_6B=\theta x_6(u)s_6B=hx_6(t''+u)s_6B$ for all $u\in\FF$, and as above this implies that  $t''=0$ and that $h\in C(U_6)$. Since $h$ fixes a subhexagon (as it fixes an apartment) and $h\in C(U_1)\cap C(U_6)$ it follows from \cite[Corollary~1.8.5]{HVM:98} that $h$ is the identity. Thus $\theta=x_2(t)x_3(a')x_4(t')x_5(a'')$. Continuing in this way, since the chamber $x_1(0)x_2(u)s_1s_6B$ ($u\in\FF$) is fixed by $\theta$, we have $t=0$. Similarly since the chamber $x_6(0)x_5(z)s_6s_1B$ ($z\in\JJ$) is fixed by $\theta$ we have $a''=0$. So $\theta=x_3(a')x_4(t')$, and since the chamber $x_1(0)x_2(0)x_3(z)s_1s_6s_1B$ ($z\in\JJ$) is fixed we have $a'=0$. Thus $\theta=x_4(t')$, and this is conjugate to $x_4(1)$ by an element of~$H$.

Conversely, it is easy to check that $x_4(1)$ fixes precisely a ball of radius~$3$ in the incidence graph centred at a point.
 \end{proof}

\begin{lemma}\label{lem:axial}
There exists a collineation of a Moufang hexagon~$\Gamma$ fixing precisely a ball of radius~$3$ in the incidence graph centred at a line if and only if $\Gamma$ is in class $(\mathrm{H4})$, and in this case all such collineations are conjugate to $x_3(1)$.
\end{lemma}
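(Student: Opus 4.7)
The plan is to dualise the argument from Lemma~\ref{lem:central}, with one genuinely new step arising from the asymmetry between $U_3$ and $U_4$ (the latter is automatically central in $U$, while the former need not be). After conjugating, I may assume the centre of the fixed ball is $L_0=P_6$. The hypothesis gives that $\theta$ fixes every chamber $gB$ with $g\in\bigcup BwB$ for $w\in\{1,s_1,s_6,s_1s_6,s_6s_1,s_6s_1s_6\}$; in particular $\theta\in B$, so I write $\theta=h\,x_1(a)x_2(t)x_3(a')x_4(t')x_5(a'')x_6(t'')$. Running exactly the chamber-by-chamber elimination of Lemma~\ref{lem:central} (with the roles of $s_1$ and $s_6$ swapped): fixing $x_1(z)s_1B$ for all $z\in\JJ$ forces $a=0$ and $h\in C(U_1)$; fixing $x_6(u)s_6B$ for all $u\in\FF$ forces $t''=0$ and $h\in C(U_6)$; together with $h$ fixing the apartment $A_0$, \cite[Corollary~1.8.5]{HVM:98} gives $h=1$; and the remaining chambers at depth~$2$ and~$3$ successively force $a''=0$, $t=0$, and $t'=0$. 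This leaves $\theta=x_3(a')$ for some $a'\in\JJ$.

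The new step, absent in the dual, is to check when $x_3(a')$ genuinely fixes the whole ball. The decisive chamber is $x_1(b)x_2(u)s_1s_6B$, which represents the point $(b,u)$ at distance~$3$ from $[\infty]$. Using commutator relation~(\ref{eq:commutator1}),
\[
x_3(a')\,x_1(b)x_2(u)=x_1(b)\,x_2\bigl(u-\ssT(b,a')\bigr)\,x_3(a'),
\]
so $\theta$ maps $(b,u)$ to $(b,u-\ssT(b,a'))$. For this chamber to be fixed for every $b\in\JJ$ the element $a'$ must lie in the radical of the trace form $\ssT$. Since $\theta\neq 1$ we need $a'\neq 0$, so the radical must be nontrivial, and inspecting the four families (H1)--(H4) identifies this as equivalent to $\ssT\equiv 0$, which is precisely the defining feature of class~(H4).

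For the converse, assume $\Gamma$ is in (H4). Then $\ssT\equiv 0$ forces $[U_1,U_3]=[U_3,U_5]=1$ via~(\ref{eq:commutator1}) and~(\ref{eq:commutator2}); together with the already-trivial commutators $[U_2,U_3]=[U_3,U_4]=[U_3,U_6]=1$, this shows that $U_3$ is central in~$U$. For any $a'\in\JJ\setminus\{0\}$, centrality of $x_3(a')$ reduces each conjugacy check $(uw)^{-1}x_3(a')(uw)$ to $w^{-1}x_3(a')w$. Using the sign table~(\ref{eq:signs}) I check that these conjugates lie in $U_1\cup U_3\cup U_5\subseteq B$ for every $w\in\{1,s_1,s_6,s_1s_6,s_6s_1,s_6s_1s_6\}$, so $x_3(a')$ fixes every chamber in the ball; while for $w=s_1s_6s_1s_6$ the conjugate lies in a negative root subgroup outside $P_1$ (concretely, $\theta$ sends the point $(b,u,b',u')$ to $(b,u,b'+a',u')$), which shows the fixed set is precisely the ball. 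The conjugacy of $x_3(a')$ to $x_3(1)$ will then follow from the action of $H$ on $U_3$, paralleling the corresponding step in Lemma~\ref{lem:central}. The hard part will be the second paragraph: correctly identifying $\ssT\equiv 0$ as the exact condition singling out class~(H4), which requires the classification of hexagonal systems but is transparent from the formulas recorded in Section~\ref{sec:Moufang}.
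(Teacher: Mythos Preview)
Your proof is correct and follows essentially the same strategy as the paper's: reduce to $\theta=x_3(a')$ via the elimination argument of Lemma~\ref{lem:central}, deduce that $\ssT(a',\cdot)\equiv 0$ (forcing class~(H4) since $\ssT$ must then be degenerate), and verify that in class~(H4) the element $x_3(a')$ is central in $U$, fixes exactly the ball, and is $H$-conjugate to $x_3(1)$. The only difference is cosmetic: the paper extracts the trace condition from the depth-$3$ chamber $x_5(z)s_6s_1s_6B$ using commutator relation~(\ref{eq:commutator2}), whereas you use the depth-$2$ chamber $x_1(b)x_2(u)s_1s_6B$ and relation~(\ref{eq:commutator1}); both chambers lie in the ball and both yield the same constraint.
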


\begin{proof}
After conjugating we may assume that $\theta$ fixes a ball of radius $3$ centred at the line $P_6$. Thus $\theta$ fixes each chamber $gB$ with $g\in B\cup Bs_1B\cup Bs_6B\cup Bs_1s_6B\cup Bs_6s_1B\cup Bs_6s_1s_6B$. As in Lemma~\ref{lem:central} we see that $\theta=x_3(a)x_4(t)$ for some $a\in\JJ$ and $t\in\FF$. Since the chamber $x_6(0)x_5(0)x_4(u)s_6s_1s_6B$ ($u\in\FF$) is fixed we see that $\theta=x_3(a)$. Moreover, since the chamber $x_6(0)x_5(z)x_4(0)s_6s_1s_6B$ $(z\in\JJ$) is fixed by $\theta$, the commutator relations give
\begin{align*}
x_5(z)s_6s_1s_6B&=x_3(a)x_5(z)s_6s_1s_6B=x_5(z)x_4(\ssT(a,z))s_6s_1s_6B.
\end{align*}
Thus $\ssT(a,z)=0$ for all $z\in\JJ$, and so $\ssT(\cdot,\cdot)$ is degenerate (as $a\neq 0$, otherwise $\theta=1$). By \cite[(30.5)]{TW:02} this forces $\Gamma$ to be in class~(H4). Then $\theta=x_3(a)$ with $\ssT(a,z)=0$ for all $z\in\JJ$. This element fixes the ball $B\cup Bs_1B\cup Bs_6B\cup Bs_1s_6B\cup Bs_6s_1B\cup Bs_6s_1s_6B$. By the commutator relations $\theta$ is central in $U$, and $\theta$ is conjugate to $x_3(1)$.
\end{proof}

Thus we have determined all (necessarily domestic) collineations fixing preciesly a ball of radius $3$ in the incidence graph.

\subsection{Large full or ideal subhexagons}\label{sec:subhex}

We now turn to the possibility of collineations fixing large full (or ideal) subhexagons. We first recall the definition of regularity in hexagons (c.f. \cite[\S1.9]{HVM:98}). Let $x,y\in\cP\cup\cL$ be opposite elements of the hexagon $\Gamma$ (so $x,y$ are either both points, or both lines). For $i=2,3$ the \textit{distance-$i$-trace} associated to $\{x,y\}$ is $\Gamma_i(x)\cap\Gamma_{6-i}(y)$ (where $\Gamma_i(x)$ denotes the set of objects at distance $i$ from $x$ in the incidence graph of~$\Gamma$). The element $x$ is called \textit{distance-$i$-regular} if distinct distance-$i$-traces $\Gamma_i(x)\cap \Gamma_{6-i}(y)$ and $\Gamma_i(x)\cap \Gamma_{6-i}(y')$ (with $y,y'$ opposite $x$) have at most one element in common. The element $x$ is \textit{regular} if it is distance-$i$-regular for $i=2,3$. If all points of $\Gamma$ are distance-$i$-regular (respectively, regular) then we say that $\Gamma$ is \textit{point-distance-$i$-regular} (respectively, \textit{point-regular}). Dually, if all lines of $\Gamma$ are distance-$i$-regular (respectively, regular) then we say that $\Gamma$ is \textit{line-distance-$i$-regular} (respectively, \textit{line-regular}).

We record the following facts. 
\begin{prop}\label{prop:regular}
Let $\Gamma$ be a Moufang hexagon, with Convention~\ref{conv:duality} in force.
\begin{compactenum}[$(1)$]
\item If $\Gamma$ is in class $(\mathrm{H}1)$, $(\mathrm{H}2)$ or $(\mathrm{H}3)$ then $\Gamma$ is line-regular but not point-regular.
\item If $\Gamma$ is in class $(\mathrm{H4})$ then $\Gamma$ is both point-regular and line-regular.
\end{compactenum}
\end{prop}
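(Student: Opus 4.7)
The plan is to use the coordinatisation of Theorem~\ref{thm:coordinates} together with transitivity of the collineation group on pairs of opposite elements of the same type. This reduces each regularity claim to an explicit algebraic computation in the hexagonal system $(\JJ,\FF,\#)$. The crucial algebraic observation is that class~(H4) is distinguished from~(H1)--(H3) by the vanishing of the symmetric bilinear trace form $\ssT(\cdot,\cdot)$.

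To prove line-regularity in all four classes, I would fix $L=[\infty]$ and a generic opposite line $L'=[b,u,b',u',b'']$. For each point $p$ on $L$ there is a unique line through $p$ at distance $4$ from $L'$, namely the projection $\proj_pL'$, which may be written in explicit coordinates using~(\ref{eq:system11})--(\ref{eq:system14}). These projections parametrise the distance-$2$-trace $\Gamma_2(L)\cap\Gamma_4(L')$. One then verifies that if two opposite lines $L'$ and $L''$ yield the same projection at two distinct points of $L$ then already $L'=L''$: this reduces to an overdetermined polynomial system whose only solution is trivial, and the argument is uniform across~(H1)--(H4) since it uses only the general form of the incidence relations. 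Distance-$3$-regularity for lines follows by an analogous argument, enumerating lines at distance $3$ from both $L$ and $L'$.

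For point-regularity, the dual computation is structurally identical but involves extra terms coming from the bilinear form $\ssT(\cdot,\cdot)$, arising via the commutator relation~(\ref{eq:commutator1}), $[x_1(a),x_3(b)]=x_2(\ssT(a,b))$. In class~(H4), $\ssT\equiv 0$, so these extra terms vanish and the dual argument yields both point-distance-$2$-regularity and point-distance-$3$-regularity. In classes~(H1)--(H3), $\ssT$ is nondegenerate, so I would exhibit explicit counterexamples: choosing $a,b\in\JJ$ with $\ssT(a,b)\neq 0$, one constructs two distinct points $q,q'$ both opposite to a chosen point $p$ (differing by a suitable element of a long root subgroup) whose distance-$2$-projections at $p$ agree on two distinct lines through $p$, giving two common elements in the traces and contradicting point-distance-$2$-regularity.

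The main obstacle is the bookkeeping with the coordinatisation: writing down the projections, solving the resulting polynomial systems, and verifying that the uniqueness statements really do follow. Once this is done, the dichotomy between~(H4) and~(H1)--(H3) is transparent from whether $\ssT$ vanishes identically on $\JJ\times\JJ$.
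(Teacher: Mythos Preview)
Your approach is genuinely different from the paper's. The paper does not compute anything: it simply cites Ronan \cite{Ron:80} (see also \cite[Theorem~6.3.2]{HVM:98}) for the fact that in every Moufang hexagon at least one of the two types of elements is regular, then invokes \cite[Corollary~3.5.11]{HVM:98} together with the observation that regularity is inherited by subhexagons to pin down which side is regular in classes~(H1)--(H3), and finally appeals to \cite[Corollary~5.5.15]{HVM:98} for the self-dual mixed case. Your proposal, by contrast, is a from-scratch coordinate computation. That is a legitimate and potentially illuminating route, and your identification of the dichotomy $\ssT\equiv 0$ versus $\ssT$ nondegenerate as the governing algebraic feature is exactly right (cf.\ \cite[(30.5)]{TW:02}).

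However, there is a genuine error in your outline. You claim that if two lines $L',L''$ opposite $L$ yield the same projection at two distinct points of $L$ then $L'=L''$. This is false: a simple count in the finite case shows that the number of lines opposite $L$ vastly exceeds the number of pairs of concurrent lines at two fixed points of $L$, so the map $L'\mapsto(\proj_{p_1}L',\proj_{p_2}L')$ cannot be injective. What distance-$2$-regularity actually requires is the weaker statement that if the projections agree at two points of $L$ then they agree at \emph{all} points of $L$, i.e.\ the two traces coincide (not that $L'=L''$). Your ``overdetermined polynomial system whose only solution is trivial'' would therefore have to be reformulated to compare traces rather than the defining lines; this changes the shape of the computation and is not obviously as clean as you suggest. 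A similar caveat applies to the distance-$3$ case. For the non-regularity of points in~(H1)--(H3) your idea of exploiting a nonzero $\ssT(a,b)$ to build a counterexample is on the right track, but again you must produce two \emph{distinct traces} sharing two elements, not merely two distinct opposite points. Until these details are worked out, the proposal remains a sketch rather than a proof.
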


\begin{proof}
By \cite{Ron:80} (see also \cite[Theorem~6.3.2]{HVM:98}) in any Moufang hexagon either all points are regular, or all lines are regular (or both). Moreover, since the regularity property is preserved on restriction to subhexagons, it follows from \cite[Corollary~3.5.11]{HVM:98} that for the hexagons in classes (H1), (H2), and (H3) (with Convention~\ref{conv:duality} in force) the lines are regular and the points are not regular, hence (1). Then (2) follows from \cite[Corollary~5.5.15]{HVM:98}. 
\end{proof}

We now return to the possibility of collineations fixing large full (or ideal) subhexagons. We first consider the mixed hexagons, class~(H4).

\begin{lemma}\label{lem:mixedsubhex}
A full or ideal subhexagon of a mixed Moufang hexagon $\Gamma$ is never the fixed point structure of an automorphism of~$\Gamma$.
\end{lemma}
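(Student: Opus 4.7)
The plan is to argue by contradiction. Assume $\theta$ is a nontrivial automorphism of the mixed hexagon $\Gamma=\Gamma(\EE,\FF)$ (with $\EE^3\subseteq\FF\subseteq\EE$, $\kar\FF=3$, $a^{\#}=a^2$) whose fixed point structure is a full subhexagon $\Gamma'=\Gamma(\EE',\FF)$ with $\EE'\subsetneq\EE$; the ideal case will follow by the dual argument, which is available because class~(H4) hexagons are \emph{both} point- and line-regular by Proposition~\ref{prop:regular}(2).

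After conjugating by a suitable automorphism I may assume that the standard apartment $A_0$ and base chamber $C_0$ lie in $\Gamma'$, so $\theta\in B=HU$. The first task is to rule out a nontrivial $U$-component. Since $\Gamma'$ is a subhexagon containing $A_0$, it contains the chambers $x_1(a)s_1B$ for each $a\in\EE'$, $x_6(t)s_6 B$ for each $t\in\FF$, and analogues around the rest of $A_0$. Substituting $\theta=hu$ into the fixing equation for each such chamber and using the commutator relations~\eqref{eq:commutator1}--\eqref{eq:commutator5}, exactly in the style of Lemmas~\ref{lem:central} and~\ref{lem:axial}, will force each $U_i$-component of $\theta$ to vanish, leaving $\theta=h\in H$.

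Next, record that $h\in H$ normalises each root subgroup, acting by scalar multiplication $h x_i(a)h^{-1}=x_i(\mu_i(h)\,a)$, and under the identifications $(t)=x_6(t)s_6 P_1$ and $[b]=x_1(b)s_1P_6$ from Section~\ref{sec:coordinates}, $h\cdot(t)=(\mu_6(h)t)$ and $h\cdot[b]=[\mu_1(h)b]$. Since $\Gamma'$ is full, every point $(t)$ on $[\infty]$ lies in $\Gamma'$, so fixing any $(t)$ with $t\ne 0$ forces $\mu_6(h)=1$; and $\Gamma'$ contains $[1]$ (as $1\in\EE'$), so $\mu_1(h)=1$. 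Conjugating~\eqref{eq:commutator5} by $h$ and using $\ssN(a)=a^3$, $a^{\#}=a^2$ yields
\[
\mu_2=\mu_6\mu_1^3,\qquad \mu_3=\mu_6\mu_1^2,\qquad \mu_4=\mu_6^2\mu_1^3,\qquad \mu_5=\mu_6\mu_1,
\]
so $\mu_i(h)=1$ for every $i$. Combined with the formulae for $s_1(a)$ and $s_6(t)$ from Section~\ref{sec:Moufang}, this gives that $h$ centralises every root elation and every Weyl representative; hence $h$ acts trivially on $\Gamma$, i.e.\ $\theta=1$, contradicting nontriviality. The ideal case $\Gamma'=\Gamma(\EE,\FF')$ is symmetric: idealness puts all lines $[b]$ with $b\in\EE$ in $\Gamma'$ (forcing $\mu_1=1$), and $\FF'\ni 1$ forces $\mu_6=1$, so the same conclusion holds.

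The main obstacle I expect is the honest execution of the first step. In characteristic~$3$ the bilinear form $\ssT$ is identically zero, so the relation $[x_1(a),x_3(b)]=x_2(\ssT(a,b))$ becomes trivial and the chamber-by-chamber bookkeeping cannot simply imitate the arguments of Lemmas~\ref{lem:central} and~\ref{lem:axial}. One instead must propagate the fixing constraints through the nontrivial characteristic-$3$ relations~\eqref{eq:commutator3} and~\eqref{eq:commutator5}, exploiting $a\times b=-ab$ and $\ssN(a)=a^3$, and track carefully which coefficients survive; this is routine but fiddly.
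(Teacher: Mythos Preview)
Your approach is genuinely different from the paper's, and it can be made to work, but as written it has two real gaps.

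First, the assertion that $h\in H$ acts on each $U_i$ by scalar multiplication is not automatic. Here $H=B\cap N$ is the apartment stabiliser in the \emph{full} collineation group, and in general such elements act on root groups by hexagonal-system automorphisms, not merely scalars (compare the argument establishing~(\ref{eq:hexauto}) in Theorem~\ref{thm:ovoids1}). In the mixed case you can rescue this: from $h\in C(U_6)$ and the relation $[x_2(t),x_6(u)]=x_4(tu)$ one gets that $\phi_2=\phi_4$ is $\FF$-linear, hence scalar; then from $[x_1(a),x_6(1)]$ one obtains $\phi_1(a)^3=\phi_2(1)\,a^3$ for all $a$, and since cube roots are unique in characteristic~$3$ this forces $\phi_1(a)=\phi_1(1)\,a$. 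But this argument must be given; it is not a matter of ``recording''.

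Second, you assume $\Gamma'=\Gamma(\EE',\FF)$ is thick. The lemma covers nonthick full subhexagons as well, and your extraction of $\mu_1=1$ from $[1]\in\Gamma'$ fails there: if $\Gamma'$ is nonthick and contains $A_0$, the only lines of $\Gamma'$ through $(\infty)$ are $[\infty]$ and $[0]$. One can still recover $\mu_1=1$ indirectly (e.g.\ fullness on $[0]$ gives $\mu_2=1$, whence $\mu_1^3=1$, whence $\mu_1=1$ in characteristic~$3$), but again this is missing from the proposal.

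By contrast, the paper avoids all of this machinery with a short geometric argument exploiting exactly the regularity you only invoke for the duality step: it reduces to the nonthick case, takes $p\in\Gamma'$ and $x\perp p$ with $x\notin\Gamma'$, chooses two points $z_1,z_2\in\Gamma'$ opposite $p$ with $x\in z_i^{\pperp}$ and $z_1^{\pperp}\cap p^\perp\neq z_2^{\pperp}\cap p^\perp$, and uses point-distance-$2$-regularity (Proposition~\ref{prop:regular}(2)) to conclude $\{x\}=z_1^{\pperp}\cap z_2^{\pperp}\cap p^\perp$ is fixed. Hence the fixed structure is full \emph{and} ideal, so it is all of~$\Gamma$. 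This handles thick and nonthick cases uniformly and needs no commutator bookkeeping.
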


\begin{proof}
Let $\Gamma'$ be a full suhexagon of a mixed hexagon $\Gamma$ with hexagonal system $(\EE,\FF,\#)$. Then either $\Gamma'$ is nonthick, or it is isomorphic to a mixed hexagon with hexagonal system $(\EE',\FF,\#)$ for some field $\EE'\leq \EE$. We claim that if $\Gamma'$ is fixed by $\theta$ then $\theta$ is the identity. Clearly it suffices to prove the result for the case that $\Gamma'$ is nonthick (for this is a subhexagon of the thick subhexagons). Let $\theta$ be a collineation of $\Gamma$ pointwise fixing $\Gamma'$. Let $p$ be an arbitrary point of $\Gamma'$ and $x\perp p$ an arbitrary point of $\Gamma$ collinear to $p$ not belonging to $\Gamma'$.  Pick two points $z_1$ and $z_2$ opposite $p$ but not opposite $x$ and such that $z_1^{\pperp}\cap p^\perp\ne z_2^{\pperp}\cap p^\perp$ (it does not matter whether one reads this inside $\Gamma$ or $\Gamma'$). The point-distance-2 regularity of $\Gamma$ (see Proposition~\ref{prop:regular}) implies that $z_1^{\pperp}\cap p^\perp\cap z_2^{\pperp}\cap p^\perp=\{x\}$. On the other hand it also implies that $z_i^{\pperp}\cap p^\perp=(z_i^\theta)^{\pperp}\cap p^\perp$ for $i=1,2$ (since $\theta$ fixes the two points of $z_i^{\pperp}\cap p^\perp$ in $\Gamma'$). It follows that $x^\theta=x$ and so every point collinear to $p$ is fixed, implying that the fixed point structure of $\theta$ is a full and ideal subhexagon, and hence has to coincide with $\Gamma$ by \cite[Proposition~1.8.2]{HVM:98}.  Hence $\theta$ is the identity. The dual argument applies due to line-distance-2-regularity of $\Gamma$ (see Proposition~\ref{prop:regular}).
\end{proof}

Now we consider the situation where $\Gamma$ is a $\sD_4$-hexagon or a type $\sE$-hexagon (classes (H2) and (H3)). Let $\FF$ be the underlying field and $(\J,\FF,\#)$ the corresponding hexagonal system. Let $\Gamma'$ be a full subhexagon. Since every subhexagon of a Moufang hexagon is again Moufang (see, for example, \cite[Lemma~5.2.2]{HVM:98}) the hexagon $\Gamma'$ corresponds to an hexagonal system $(\J',\FF,\#)$, with $\J'$ a subspace of $\J$. Since $\J$ and $\J'$ are vector spaces over $\FF$, they need to have a different dimension if $\Gamma'\neq\Gamma$. We first determine the pairs $(\J,\J')$ for which $\Gamma'$ is large in~$\Gamma$.

\begin{prop}\label{largesubh}
Let $\Gamma$ be a Moufang hexagon in class $(\mathrm{H}2)$ or $(\mathrm{H}3)$ with hexagonal system $(\JJ,\FF,\#)$. Let $\Gamma'$ be a full subhexagon of $\Gamma$ and let $(\JJ',\FF,\#)$ be the associated hexagonal system with $\JJ'\subseteq\JJ$ (here we shall allow $\dim\JJ'=0$ if $\Gamma'$ is non-thick). Then $\Gamma'$ is large in $\Gamma$ if and only if $\dim\JJ=3$ and $\JJ'=\FF$.
\end{prop}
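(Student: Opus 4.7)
The plan is to use the coordinatization of Theorem~\ref{thm:coordinates}, in which $\Gamma'$ consists of those points and lines of $\Gamma$ whose $\JJ$-coordinates all lie in $\JJ'$. Since $\Gamma'$ is full, a point of $\Gamma\setminus\Gamma'$ has distance at most $3$ from $\Gamma'$ precisely when it is collinear with a point of $\Gamma'$, and one checks that largeness for points implies largeness for lines (given a line $L$ and a point $p$ on $L$, a witness point for $p$ at distance $2$ gives a witness for $L$ at distance $\le 3$, while a witness line $L'$ for $p$ at distance $3$ yields, by fullness, a witness point on $L'$ for $L$ at distance $\le 3$). Points not opposite $(\infty)$ are covered automatically by the incidence structure (for instance $(t,a,t')$ lies on the line $[t,a]$, which contains $(t)\in\Gamma'$), so the entire problem reduces to deciding, for $p=(t,a,t',a',t'')$ opposite $(\infty)$, when $p$ is collinear with a point of $\Gamma'$.

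The main computation enumerates the lines through $p$ and determines which ones meet $\Gamma'$. The canonical line $[t,a,t',a']$ contains $(t,a,t')\in\Gamma'$ exactly when $a\in\JJ'$. Every other line has the form $[b,u,b',u',b'']$ for $b\in\JJ$, with $u,b',u',b''$ given by~(\ref{eq:system11})--(\ref{eq:system14}); its near-side point $(b,u,b',u')$ lies in $\Gamma'$ precisely when $b\in\JJ'$ and (using~(\ref{eq:system12})) $a\times b\equiv a'\pmod{\JJ'}$. For the $|\FF|$ far-side points $(s,a_s,s',a_s',s'')$ on this line, equations~(\ref{eq:system21}) and~(\ref{eq:system23}) combined with the symmetry $a\times b=b\times a$ and the identity $b\times b=2b^\#$ collapse to the clean expressions $a_s=a+(t-s)b$ and $a_s'=a'+(t-s)b^\#$. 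Demanding $a_s,a_s'\in\JJ'$, substituting $b=(\tilde{a}-a)/r$ with $r=t-s\in\FF^\times$ and $\tilde{a}\in\JJ'$, and expanding via $(u+v)^\#=u^\#+u\times v+v^\#$ leaves the single residual condition $a\times\tilde{a}\equiv a^\#+ra'\pmod{\JJ'}$. Collecting the three cases shows that $p$ is collinear with a point of $\Gamma'$ if and only if $a\in\JJ'$ or $a'\in L_a\pmod{\JJ'}$, where $L_a:=\FF\cdot a^\#+\{a\times b:b\in\JJ'\}$. The main obstacle is exactly this simplification: without the $a\times b$-cancellation and the substitution eliminating $b$, the near- and far-side conditions will not combine into a single dimension-bounded subspace.

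A dimension count now completes the proof. Since $\dim_\FF L_a\le 1+\dim_\FF\JJ'$, largeness of $\Gamma'$ forces $\dim_\FF(\JJ/\JJ')\le 1+\dim_\FF\JJ'$, that is, $2\dim_\FF\JJ'\ge\dim_\FF\JJ-1$. From the classification \cite[(15.14)]{TW:02}, the possible dimensions of a proper subhexagonal system $\JJ'\subsetneq\JJ$ in classes $(\mathrm{H}2)$ and $(\mathrm{H}3)$ are $\dim\JJ'\in\{0,1\}$, $\{0,1,3\}$, $\{0,1,3,9\}$ according as $\dim\JJ=3,9,27$, and the inequality holds only when $(\dim\JJ,\dim\JJ')=(3,1)$. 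The non-thick case $\JJ'=0$ is excluded separately (an apartment of a thick hexagon is never large). Conversely, when $\dim\JJ=3$ and $\JJ'=\FF$, so $\JJ=\EE$ is a cubic extension of $\FF$, the identity $a\times 1=\ssT(a)-a$ gives $L_a=\FF\overline{a}+\FF\overline{a^\#}$ in $\EE/\FF$, and these two elements are linearly independent modulo $\FF$ because $aa^\#=\ssN(a)\in\FF$ precludes $a^\#\in\FF+\FF a$ (otherwise $a$ would satisfy a polynomial of degree at most $2$ over $\FF$, contradicting $[\FF(a):\FF]=3$). Hence $L_a=\EE/\FF$ for every $a\in\EE\setminus\FF$, and $\Gamma'$ is large.
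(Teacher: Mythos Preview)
Your argument is essentially correct, and the overall strategy parallels the paper's for sufficiency while taking a more elementary route for necessity. One step, however, is not fully justified: the claim that ``points not opposite $(\infty)$ are covered automatically by the incidence structure'' fails for the $4$-coordinate points $(b,u,b',u')$ with $b\notin\JJ'$. Such a point is at distance $4$ from $(\infty)$, and the only neighbour you exhibit via the incidence chain is $(b,u)$, which lies in $\Gamma'$ only when $b\in\JJ'$. The repair is short: the collineation $s_6$ stabilises $\Gamma'$ (it fixes the base apartment) and maps $(b,u,b',u')$ to the $5$-coordinate point $(0,b,-u,b',u')$, so once all $5$-coordinate points are handled these follow. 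Alternatively, one observes that if $p\notin\Gamma'$ is not opposite any point of $\Gamma'$ then $d(p,L)=5$ for some line $L$ of $\Gamma'$, whence all but one point of $L$ is opposite $p$, a contradiction; hence one may always conjugate $p$ to a $5$-coordinate point by an automorphism of $\Gamma'$.

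For comparison: the paper's necessity argument is different in flavour. It treats $\Gamma$ as an algebraic variety over $\FF$, computes $\dim\Gamma=3+2\dim\JJ$, $\dim\Gamma'=3+2\dim\JJ'$, $\dim p^\perp=1+\dim\JJ$, and deduces from largeness that $\dim\Gamma\le\dim\Gamma'+\dim p^\perp$, giving the same inequality $\dim\JJ\le 1+2\dim\JJ'$. Your bound $\dim_\FF(\JJ/\JJ')\le\dim_\FF L_a\le 1+\dim\JJ'$ reaches the identical conclusion by purely linear-algebraic means, avoiding the variety structure entirely; this is a genuine simplification. For sufficiency the two proofs coincide in substance: the paper's ``Scenario~1'' and ``Scenario~2'' are exactly your near-side condition ($b\in\JJ'$, $a\times b\equiv a'$) and far-side condition ($a\times\tilde a\equiv a^\#+ra'$), and the paper's case split on whether $1,a,a'$ are linearly dependent is equivalent to your verification that $\overline{a},\overline{a^\#}$ span $\EE/\FF$.
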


\begin{proof}
The dimension of $\Gamma$ as an algebraic variety over $\FF$ is equal to $\dim\Gamma=3+2\dim\J$. The dimension of $\Gamma'$ is $\dim\Gamma'=3+2\dim\J'$, whereas the dimension of a point perp equals $\dim p^\perp=1+\dim\J$. If $\Gamma'$ is large in $\Gamma$, then every point of $\Gamma$ is in the perp of some point of $\Gamma'$ and so $\dim\Gamma\leq\dim\Gamma'+\dim p^\perp$, yielding $\dim\J\leq 1+2\dim\J'$. Since the possible dimensions of $\J$ and $\J'$ are $0,1,3,9$ and $27$, and $\dim\J\geq 3$, this implies $(\dim\J,\dim\J')=(3,1)$. We now show that this condition is also sufficient.

We use the coordinatisation of $\Gamma$ given in Theorem~\ref{thm:coordinates}. For now we do not place any restrictions on $\JJ$ and $\JJ'$. The subhexagon $\Gamma'$ is given by restricting $\J$ to $\J'$ in each of the coordinates associated to the short roots. A generic point outside $\Gamma'$ is given by $(t,a,t',a',t'')$, but by applying appropriate long root elations inside $\Gamma'$ we can assume that $t=t'=t''=0$. Hence $\Gamma'$ is large in $\Gamma$ if and only if, for all $(a,a')\in(\JJ\times \JJ)\setminus(\JJ'\times\JJ')$, there is some point $q$ of $\Gamma'$ collinear to the point $p:=(0,a,0,a',0)$. 

If $a\in\J'$, then $(0,a,0)\in p^\perp\cap\Gamma'$, and so we may assume that $a\in\J\setminus\J'$. By equations~(\ref{eq:system11})--(\ref{eq:system14}) the lines incident with $p$ are the lines
$$
L(b)=[b,-\ssT(a',b)+\ssT(a,b^{\#}),a'-(a\times b),-\ssT(a^{\#},b)-\ssT(a,b'),a]
$$
with $b\in\JJ$. Thus we must find a point $q$ of $\Gamma'$ on such a line $L(b)$. There are two scenarios. 
\smallskip

\noindent\textit{Scenario 1:} Suppose there is $b\in\JJ'$ such that $a'-(a\times b)\in\JJ'$. Then we may take 
$$
q=(b,-\ssT(a',b)+\ssT(a,b^{\#}),a'-(a\times b),-\ssT(a^{\#},b)-\ssT(a,b')).
$$
The point $q$ is in $\Gamma'$ and lies on $L(b)$ as required.
\smallskip

\noindent\textit{Scenario 2:} Suppose there is $b\in\JJ'$ and $u\in\FF$ such that $a-ub\in\JJ'$ and $a'-ub^{\#}\in\JJ'$. Then by~(\ref{eq:system21})--(\ref{eq:system24}) the point
$$
q=(u,a-ub,u',a'-ub^{\#},u'')
$$
(for any $u',u''\in\FF$) is in $\Gamma'$ and lies on $L(b)$ as required.
\smallskip

Thus it suffices to show that in the case $\dim\JJ=3$ and $\JJ'=\FF$ at least one of the above scenarios always occurs (given any $a\in\JJ\backslash\JJ'$ and $a'\in\JJ$). Assume first that $1,a,a'$ are linearly dependent, and so $a'=\lambda a+\mu$ with $\lambda,\mu\in\FF$. In this case, taking $b=-\lambda$ we have
$$
a'-(a\times b)=\lambda a+\mu+a\times \lambda=\lambda a+\mu+\lambda (a^{\sigma}+a^{\sigma^2})=\lambda \ssT(a)+\mu\in\FF,
$$
and so we are in the first scenario.

Assume now that $1,a,a'$ are linearly independent. Since $\dim\JJ=3$ there exist $\alpha,\beta,\gamma\in\FF$ with $a^{\sigma}a^{\sigma^2}=\alpha a'+\beta a+\gamma$. We have $\alpha\neq 0$ (for otherwise $a^{\sigma}a^{\sigma^2}=\beta a+\gamma$ implies that $\beta a^2+\gamma a\in\FF$, and so $a$ lies in a quadratic extension of~$\FF$, a contradiction). Choose $b=\alpha^{-1}(a+\beta)$ and $u=\alpha$. Then $a-ub=-\beta\in\FF$, and we compute
\begin{align*}
a'-ub^{\#}&=-\alpha^{-1}\beta\ssT(a)-\alpha^{-1}\gamma-\alpha^{-1}\beta^2\in\FF,
\end{align*}
and so we are in scenario~$2$, completing the proof. 
\end{proof}
%
%\begin{remark}
%It is possible to give an explicit example of a pair $(\JJ,\JJ')$ in which neither scenario~1 nor scenario 2 of the above proof hold. Let $\J$ be a cyclic division algebra of degree 3 with centre $\FF$ (thus $(\JJ,\FF,\#)$ is an hexagonal system of type $9/\FF$, in class (H3)). Take $\J'$ to be a subfield of dimension $3$ over~$\FF$. Following \cite[(15.5)]{TW:02}, we can write every element of $\JJ$ as $\lambda+\mu y+\nu y^2$, where $\lambda,\mu,\nu\in\J'$. Setting $a=y$ and $a'=ty^2$, with $t\in\J'\setminus\FF$ one readily verifies that neither scenario occurs. 
%\end{remark}

\begin{thm}\label{thm:subhex}
Let $\Gamma$ be a Moufang hexagon, with Convention~\ref{conv:duality} in force. 
\begin{compactenum}[$(1)$]
\item $\Gamma$ does not admit a domestic collineation pointwise fixing precisely a large ideal subhexagon.
\item $\Gamma$ admits a collineation $\theta$ pointwise fixing precisely a large full subhexagon if and only if $\Gamma$ is either:
\begin{compactenum}[$(a)$]
\item a dual split Cayley hexagon over a field $\FF$ with $\kar\FF\neq 3$ and $X^2+X+1$ reducible over~$\FF$. In this case there is a unique class of nontrivial collineations fixing a large full subhexagon, with each such collineation having order~$3$.
\item a triality hexagon of type ${^3}\sD_4$ in which case $\theta$ is induced by a nontrivial element of the Galois group, and hence $\theta$ has order~$3$.
\end{compactenum}
\end{compactenum}
\end{thm}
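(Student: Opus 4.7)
The plan is to handle part~(1) first by a uniform dual argument, then work through part~(2) case by case on the class of~$\Gamma$. For part~(1), Proposition~\ref{prop:regular} asserts that $\Gamma$ is line-regular in \textit{every} class $(\mathrm{H1})$--$(\mathrm{H4})$; hence the proof of Lemma~\ref{lem:mixedsubhex} dualises verbatim, using line-distance-$2$-regularity in place of point-distance-$2$-regularity. Any collineation pointwise fixing an ideal subhexagon $\Gamma'$ will thereby fix every point on every line of $\Gamma'$ as well, so its fixed element structure is simultaneously full and ideal, forcing it to equal all of $\Gamma$ by~\cite[Proposition~1.8.2]{HVM:98}. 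This kills every ideal possibility, large or otherwise, without any further case analysis.

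For part~(2), Lemma~\ref{lem:mixedsubhex} immediately eliminates class $(\mathrm{H4})$, and Proposition~\ref{largesubh} tells us that in classes $(\mathrm{H2})$ and $(\mathrm{H3})$ the only large full subhexagons arise in class $(\mathrm{H2})$, namely the subgeometry $\Gamma'$ associated to the inclusion $\JJ'=\FF\subset\JJ=\EE$ with $\EE/\FF$ a separable cubic extension. A collineation $\theta$ pointwise fixing $\Gamma'$ must, after conjugation so that it stabilises the base apartment and chamber, act as $x_i(a)\mapsto x_i(a^\psi)$ ($i$ odd) for some $\FF$-linear bijection $\psi\colon\EE\to\EE$ pointwise fixing $\FF$. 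The commutator relations~(\ref{eq:commutator1})--(\ref{eq:commutator5}) encode $\ssT$, $\ssN$ and $\#$ in terms of root subgroup structure, and so force $\psi$ to respect each of these maps, i.e., to be an $\FF$-algebra automorphism of~$\EE$. In the ${}^3\sD_4$ case (normal $\EE/\FF$) this yields precisely the nontrivial elements of $\mathrm{Gal}(\EE/\FF)$, each of order~$3$, and a direct verification via Theorem~\ref{thm:coordinates} confirms they extend to collineations whose fixed element structure is exactly~$\Gamma'$. In the ${}^6\sD_4$ case (non-normal $\EE/\FF$) no nontrivial $\psi$ exists, ruling out this class.

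It remains to treat class $(\mathrm{H1})$, the dual split Cayley hexagons over $\FF$ with $\kar\FF\neq 3$, and this is the main obstacle. Since $\dim_\FF\JJ=1$ here, no proper thick full subhexagon arises from the subhexagonal system machinery, and the large full subhexagon must instead be identified directly via the coordinatisation of Theorem~\ref{thm:coordinates}. The approach is to reduce via the Bruhat decomposition $\theta=hu$, with $h\in H$ and $u\in U$, and to analyse the constraints imposed by the requirement that $\theta$ fix a large substructure. The diagonal action $hx_i(a)h^{-1}=x_i(\chi_i(h)a)$ through characters $\chi_i$, combined with the commutator relations~(\ref{eq:commutator1})--(\ref{eq:commutator5}), shows that $u$ must be essentially trivial and that the characters $\chi_i(h)$ must satisfy a cube-root compatibility condition; this system admits a nonscalar solution precisely when a primitive cube root of unity $\omega$ lies in $\FF$, equivalently when $X^2+X+1$ is reducible over~$\FF$. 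One then exhibits such a $\theta$ of order $3$, computes its fixed element structure to verify it is indeed a large full subhexagon, and argues uniqueness of the conjugacy class from uniqueness (up to the Weyl group action) of the normalised character assignment. The genuinely delicate point is showing the fixed structure is a large subhexagon rather than merely the base apartment, which forces an explicit and somewhat intricate computation in the coordinatisation, and the nonexistence in the irreducible case falls out of the same cube-root obstruction as a byproduct.
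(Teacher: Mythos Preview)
Your proposal is correct and in places takes a genuinely different route from the paper. For part~(1), the paper argues case by case: it invokes \cite[Theorem~5.9.11]{HVM:98} to rule out thick ideal subhexagons in classes~(H2) and~(H3), cites \cite[Theorem~6.10]{PVM:21} for class~(H1), and uses Lemma~\ref{lem:mixedsubhex} for class~(H4). Your uniform argument via line-distance-$2$-regularity is more economical and self-contained, since nothing in the proof of Lemma~\ref{lem:mixedsubhex} is specific to mixed hexagons beyond the regularity hypothesis, and by Proposition~\ref{prop:regular} that hypothesis holds in every class. For part~(2), your treatment of classes~(H2)--(H4) matches the paper in substance; the only methodological difference is that in the~(H2) case the paper identifies the induced map on a line pencil as an element of $\mathsf{P}\Gamma\mathsf{L}_2(\EE)$ via Hua's theorem~\cite{Hua:49} on the Moufang set structure, whereas you extract the algebra-automorphism property from the commutator relations --- both work. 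The real divergence is class~(H1): the paper simply cites \cite[Theorem~6.10]{PVM:21}, whereas you derive the result directly. Your outline is sound (fixing an apartment forces $\theta\in H$, so in fact $u=1$ outright; fullness kills the long-root characters; then~(\ref{eq:commutator5}) forces $\chi_1^3=1$ with $\chi_1\neq 1$). What the citation buys the paper is a ready-made verification that the resulting nonthick fixed structure is \emph{large} and that the conjugacy class is unique; your approach must establish largeness directly, which is precisely the delicate point you flag.
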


\begin{proof}
(1) Theorem~5.9.11 of  \cite{HVM:98} implies that the Moufang hexagons in Classes~(H2) and (H3) have no thick ideal subhexagons, and in particular no large ones. By \cite[Theorem~6.10]{PVM:21} no hexagon in class (H1) admits a domestic automorphism fixing a large ideal subhexagon, and Lemma~\ref{lem:mixedsubhex} eliminates the possibility for hexagons in class~(H4).

(2) The statements for the split Cayley hexagons follow from the classification in \cite[Theorem~6.10]{PVM:21}. Thus suppose that $\Gamma$ is not of class (H1). If $\Gamma$ admits a collineation pointwise fixing precisely a large full subhexagon then by Proposition~\ref{largesubh} $\Gamma$ belongs to the class~(H2). It is clear that if $\theta$ pointwise fixes a dual split Cayley subhexagon $\Gamma'$, then the action on a point row is essentially a field automorphism (since the Moufang set structure of the point row is preserved by~$\theta$, it induces an element of $\mathsf{P}\Gamma\mathsf{L}_2(\JJ)$, with $\JJ$ the cubic extension of $\FF$ in question; this follows from a result of Hua \cite{Hua:49}, see also \cite[Lemma 8.5.10]{HVM:98}). Since the only subhexagon strictly containing  $\Gamma'$ is $\Gamma$ (in view of the dimension of the corresponding Jordan algebra), $\theta$ as a whole is determined by the field automorphism. Since the Galois group is trivial in the ${^6}\sD_4$ case, $\Gamma$ must be of type ${^3}\sD_{4}$. Since the fixed element set of a nontrivial element $\theta$ of the Galois group is precisely a dual split Cayley hexagon, and since this subhexagon is large and full by Proposition~\ref{largesubh}, it follows from Theorem~\ref{thm:PTM3} that $\theta$ is domestic.
\end{proof}

\subsection{Ovoids and spreads}\label{sec:ovoids}

We now turn to the possibility of collineations fixing ovoids or spreads.

\begin{thm}\label{regnocol}
If $\Gamma$ is a Moufang hexagon with regular points, then no nontrivial collineation fixes an ovoid. 
\end{thm}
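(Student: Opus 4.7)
I plan to argue by contradiction, so suppose $\theta\ne 1$ is a collineation of $\Gamma$ fixing the ovoid $O$ pointwise. First, $\theta$ is automatically point-domestic: for any point $x$ the ovoid axiom yields $q\in O$ with $d(x,q)\le 2$, and since $\theta q=q$ we have $d(x,\theta x)\le d(x,q)+d(q,\theta x)\le 4<6$. By Theorem~\ref{thm:PTM3}, the fixed element structure of $\theta$ is either a ball of radius $3$ centred at a line, a large full subhexagon, or an ovoid. The ball case is excluded because any ball of radius $3$ at a line $L$ contains at most $s+1$ pairwise opposite points: two opposite points $p,q$ in the ball must each be at distance $3$ from $L$ (the only way $d(p,L)+d(L,q)=6$ is attainable) and must have distinct projections on $L$, as coinciding projections would give a length-$4$ walk from $p$ to $q$. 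The subhexagon case is excluded by Lemma~\ref{lem:mixedsubhex} combined with Proposition~\ref{prop:regular}, which identifies point-regular Moufang hexagons with the mixed hexagons of class~(H4). Hence $\mathrm{Fix}(\theta)=O$; in particular no line and no point outside $O$ is fixed.

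The contradiction comes from point-distance-$2$-regularity. Fix $p\in O$; for each $q\in O\setminus\{p\}$ the distance-$2$-trace $T_{p,q}=\Gamma_2(p)\cap\Gamma_4(q)$ is $\theta$-stable, since both $p$ and $q$ are fixed, and by regularity distinct such traces through $p$ meet in at most one point. Thus it suffices to exhibit $x\in\Gamma_2(p)$ lying in two distinct traces $T_{p,q}$ and $T_{p,q'}$, for then $\theta x\in T_{p,q}\cap T_{p,q'}=\{x\}$ forces $\theta x=x$, contradicting $\mathrm{Fix}(\theta)=O$ because $x\in\Gamma_2(p)$ cannot lie in $O$.

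To construct such a configuration, pick $x\in\Gamma_2(p)$, let $L_{px}$ be the line through $p$ and $x$, and choose a line $M$ through $x$ with $M\ne L_{px}$. For each $y\in M\setminus\{x\}$ the ovoid axiom provides a unique $q_y\in O$ with $d(y,q_y)\le 2$; since a point off $L_{px}$ cannot be collinear with $p$ without creating a triangle $\{p,x,y\}$, we have $q_y\ne p$, and the length-$4$ path from $x$ to $q_y$ through $y$ gives $d(x,q_y)=4$ and $x\in T_{p,q_y}$. The girth-$12$ condition forces the $q_y$ to be pairwise distinct as $y$ ranges over $M\setminus\{x\}$ (a coincidence would yield a closed walk in the incidence graph strictly shorter than $12$) and also across different choices of $M$; so $x$ lies in $ts$ traces $T_{p,q_y}$.

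The main technical obstacle is to ensure that not all $ts$ of these traces coincide. If they all equalled a single trace $T$, then by point-distance-$2$-regularity each $z\in T$ would be at distance $4$ from every one of the $ts$ chosen ovoid points, and reapplying the construction at $z$ would recover exactly the same family of ovoid points, producing strong combinatorial rigidity on $O$ that I plan to rule out using the Moufang commutator relations among the root subgroups fixing $p$. A cleaner finish proceeds dually via point-distance-$3$-regularity: every line $M$ in the construction lies in the distance-$3$-trace $\Gamma_3(p)\cap\Gamma_3(q_y)$ for each $y\in M\setminus\{x\}$, so either two of these distance-$3$-traces through $p$ are distinct and meet only in $M$, forcing $\theta M=M$ in contradiction with the absence of fixed lines, or the same rigid degeneracy recurs and is eliminated by the same Moufang-theoretic considerations.
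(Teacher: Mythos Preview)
Your overall strategy matches the paper's: use point-distance-$2$-regularity to show that a collineation fixing an ovoid must fix a point outside the ovoid. But your argument has a genuine gap precisely where you flag ``the main technical obstacle''. You produce many ovoid points $q_y$ with $x\in T_{p,q_y}$, but you never establish that two of the traces $T_{p,q_y}$ are distinct; you only announce that you ``plan to rule out'' the degenerate case via unspecified commutator computations, and the alternative via distance-$3$-regularity runs into the same unresolved degeneracy. Without this, no contradiction follows.

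The paper closes exactly this gap with a concrete geometric construction rather than a counting or genericity argument. Given $p,q\in O$, it passes to the unique non-thick \emph{ideal} subhexagon $\Gamma'$ through $p$ and $q$ (available by \cite[Lemma~1.9.10]{HVM:98}), takes two paths $p*L_i*r_i*M_i*s_i*K_i*q$ in $\Gamma'$, picks a third line $M_3$ through $r_1$, lets $s_3$ be the second point of $\Gamma'$ on $M_3$, and takes $r\in O$ collinear to $s_3$. Then $r\notin\Gamma'$ (otherwise $r$ would not be opposite $q$), and one checks that $r_1\in p^\perp\cap r^{\pperp}$ while $r_2\notin p^\perp\cap r^{\pperp}$. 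Hence $T_{p,q}\neq T_{p,r}$ but both contain $r_1$, and regularity gives $\{r_1\}=p^\perp\cap q^{\pperp}\cap r^{\pperp}$, so $r_1$ is fixed. This is the missing idea in your proposal.

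Two further remarks. First, your preliminary reduction via Theorem~\ref{thm:PTM3}, Proposition~\ref{prop:regular} and Lemma~\ref{lem:mixedsubhex} is unnecessary: once $r_1$ is fixed one immediately gets a fixed line $L_1$, then (using ovoid points near every line meeting $L_1$) a fixed full and ideal subhexagon, hence $\theta=\mathrm{id}$ by \cite[Proposition~1.8.2]{HVM:98}; the paper argues this way directly. Second, your exclusion of the ball case relies on the inequality $|O|>s+1$, which is a cardinality statement with no content when $\Gamma$ is infinite; so even that step would need repair.
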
 

\begin{proof}
Let $O$ be an ovoid of $\Gamma$ fixed by some collineation $\theta$. We show that $\theta$ is necessarily the identity. Indeed, choose $p,q\in O$, $p\neq q$. There is a unique non-thick ideal subhexagon $\Gamma'$ containing $p$ and $q$ (see \cite[Lemma~1.9.10]{HVM:98}). Let $p*L_i*r_i*M_i*s_i*K_i*q$, $i=1,2$, be two distinct paths joining $p$ with $q$. Then we select an arbitrary line $M_3$ through $r_1$, $M_3\notin\{L_1,M_1\}$. Let $s_3$ be the unique point of $\Gamma'$ on $M_3$ distinct from $r_1$. Let $r$ be the unique member of $O$ collinear to~$s_3$. Then $r$ does not belong to $\Gamma'$ since it would otherwise not be opposite $q$ (by non-thickness of $\Gamma'$). Hence $r^{\pperp}\cap p^\perp$ contains $r_1$ but not $r_2$. Since $\theta$ fixes $p,q,r$, it fixes $p^\perp\cap q^{\pperp}\cap r^{\pperp}=\{r_1\}$ (by regularity of points). Hence $\theta$ fixes $L_1$. It is now easy to see that there are points of $O$ at distance 3 from every line meeting $L$ implying that $\theta$ pointwise fixes a full and ideal subhexagon, and hence is the identity by \cite[Proposition~1.8.2]{HVM:98}.   
\end{proof}

\begin{cor}\label{cor:spread}
The hexagons in classes $(\mathrm{H1})$, $(\mathrm{H2})$ and $(\mathrm{H3})$ do not admit collineations whose fixed element set is a spread, and those in class~$(\mathrm{H4})$ do not admit collineations whose fixed element set is either an ovoid or a spread.
\end{cor}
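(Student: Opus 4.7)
The plan is to deduce this corollary almost immediately from Theorem~\ref{regnocol} combined with the regularity information recorded in Proposition~\ref{prop:regular}. The point is that Theorem~\ref{regnocol} is stated only for ovoids of point-regular hexagons, but its proof dualises without change: every step (traces, unique non-thick subhexagon through two opposite elements, use of \cite[Proposition~1.8.2]{HVM:98} to conclude the fixed structure is all of $\Gamma$) interchanges points with lines, ovoids with spreads, and ideal with full subhexagons. Thus one obtains the dual statement: if $\Gamma$ is a Moufang hexagon with regular lines, then no nontrivial collineation fixes a spread.

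With this in hand, the corollary is a case analysis using Proposition~\ref{prop:regular}. Hexagons in classes $(\mathrm{H1})$, $(\mathrm{H2})$ and $(\mathrm{H3})$ are line-regular, so the dual of Theorem~\ref{regnocol} rules out a collineation whose fixed element set is a spread. Hexagons in class $(\mathrm{H4})$ are both point-regular and line-regular, so Theorem~\ref{regnocol} together with its dual rules out both possibilities.

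The only mild subtlety, and probably the main thing worth spelling out in the written proof, is to justify the claim that Theorem~\ref{regnocol} dualises. This amounts to verifying that each geometric ingredient used in its proof has a well-defined dual meaning in a generalised hexagon: points and lines play symmetric roles in the incidence geometry, an ovoid dualises to a spread, the distance-$2$-trace construction dualises to a distance-$2$-trace in lines, point-regularity dualises to line-regularity, and non-thick ideal subhexagons dualise to non-thick full subhexagons. No further computation is required, so the proof should be short.
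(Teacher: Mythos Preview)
Your proposal is correct and matches the paper's approach exactly: the paper's proof is the single line ``This follows from Proposition~\ref{prop:regular} and Theorem~\ref{regnocol},'' leaving the dualisation implicit. Your write-up simply makes explicit what the paper takes for granted, so there is nothing to add or correct.
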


\begin{proof}
This follows from Proposition~\ref{prop:regular} and Theorem~\ref{regnocol}.
\end{proof}

We must determine whether the hexagons in class (H1), (H2) and (H3) admit collineations fixing ovoids. In \cite[Theorem~6.10]{PVM:21} we proved that for class (H1) such an automorphism exists if and only if $X^2+X+1$ is irreducible over~$\FF$, and so it remains to consider the classes (H2) and (H3).

An automorphism of an hexagonal system $(\JJ,\FF,\#)$ is a vector space isomorphism $h:\JJ\to\JJ$ such that $h\#=\#h$ (see~\cite[(15.17)]{TW:02}). In particular, if $h$ is an hexagonal system automorphism then $(ta^{\#})^{h}=ta^{h\#}$, $\ssT(a^{h},b^{h})=\ssT(a,b)$, $\ssN(a^{h})=\ssN(a)$, and $(a\times b)^{h}=a^{h}\times b^{h}$ for all $t\in\FF$ and $a,b\in\JJ$. Each hexagonal system automorphism $h:\JJ\to\JJ$ may also be regarded as an automorphism $h\in G$ of the associated Moufang hexagon in the natural way. Since $h$ fixes the base apartment we have $h\in H$, and 
\begin{align}\label{eq:hexauto}
h x_i(c)h^{-1}=x_i(c^{h})
\end{align}
for all $1\leq i\leq 12$, where $c\in\JJ$ for odd $i$, and $c\in\FF$ for even~$i$. 

Theorems~\ref{thm:ovoids1} and~\ref{thm:allexamples} below give the main tools to complete our analysis of collineations fixing ovoids. In these theorems we will make repeated use of the formulae listed in Section~\ref{sec:Moufang}. 

\begin{thm}\label{thm:ovoids1}
Let $\Gamma$ be a Moufang hexagon with hexagonal system $(\JJ,\FF,\#)$. Suppose that there exists a nontrivial point-domestic collineation $\theta$ of $\Gamma$ fixing an ovoid~$O$. Then $\theta$ is conjugate to an element $\theta=hx_1(1)s_1$ with $h:\JJ\to\JJ$ an hexagonal system automorphism. Moreover
\begin{compactenum}[$(1)$]
\item $h^3=1$ and $\theta$ has order~$3$;
\item $\ssT(a)=a+a^h+a^{h^2}$ and $\ssT(a^{\#})=\ssT(a,a^h)$ for all $a\in\JJ$;
%\item the fixed point set of $\theta=hx_1(1)s_1$ is $O=\{(\infty)\}\cup\{(t,a,t',a^{h^2}+t,t)\mid t,t'\in\FF,\,a\in\JJ\}$;
\item the equation $z= 1-z^{-h}$ has no solution $z\in\JJ\backslash\{0\}$.
\end{compactenum}
\end{thm}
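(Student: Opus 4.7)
The plan is to reduce $\theta$ to the advertised normal form via a Bruhat-type argument, then extract the three assertions by explicit computation with the commutator relations of Section~\ref{sec:Moufang}.

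\emph{Normal form.} First I would pick any $p\in O$ and conjugate in $G$ so that $p = (\infty)$. Then $\theta\in P_1 = B\cup Bs_1B$. Since the fixed structure is an ovoid, no line is fixed, so in particular $\theta[\infty]\ne[\infty]$, giving $\theta\notin B$ and hence $\theta\in Bs_1B$. Using the sharp Bruhat decomposition $Bs_1B = U_1 s_1 B$ together with $B = HU$, write $\theta = x_1(a)s_1 hu$, and push $h$ through $s_1$ using the normality of $H$ in $N$ to rewrite this as $\theta = hx_1(a')s_1 u'$ with $u'$ supported on $U_2\cdots U_6$. A further conjugation by a suitable element of $H$ normalises $a' = 1$. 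To eliminate the residual $u'$, I would use a second fixed point $p'\in O\setminus\{(\infty)\}$: since $p'$ is opposite $(\infty)$ it has coordinates $(t,a,t',a',t'')$, and the equations $\theta p' = p'$ derived from Theorem~\ref{thm:coordinates} force the $U_j$-components of $u'$ (for $j\ge 2$) to vanish. Finally, compatibility of the conjugation action of $h$ on the root subgroups $U_1, U_3, U_5$ with the commutator relations~\eqref{eq:commutator1}--\eqref{eq:commutator5} forces $h$ to commute with $\#$, making it an hexagonal system automorphism.

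\emph{Properties~(1) and~(2).} I would compute $\theta^3$ directly by repeatedly applying $s_1 h s_1^{-1}\in H$, the conjugation identity $s_1 x_1(b)s_1^{-1} = x_7(b)$, the negative-root commutators~\eqref{eq:negativert}, and identity~\eqref{eq:1-fold}, organising the outcome in the canonical form (torus)$\cdot$(element of $U$). Since the fixed set of any nontrivial power of $\theta$ contains $O$, and the classification in Theorem~\ref{thm:PTM3} together with Lemmas~\ref{lem:central} and~\ref{lem:axial} rules out strictly larger fixed structures of point-domestic type (an ovoid is the ``thinnest'' allowed fixed configuration for a point-domestic collineation, so any power with strictly larger fixed set must be trivial), we conclude $\theta^3 = 1$. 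Reading off the torus and unipotent components of this identity gives $h^3 = 1$, while the vanishing of the $U_2$- and $U_4$-components yields, after substituting a general $a\in\JJ$ and using $1^\# = 1$, the identities $\ssT(a) = a + a^h + a^{h^2}$ and $\ssT(a^\#) = \ssT(a,a^h)$.

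\emph{Property~(3) and main obstacle.} A candidate fixed point $q\notin O$ must be opposite $(\infty)$, for otherwise the unique line through $q$ and $(\infty)$ (or through $(\infty)$ meeting $q$) would be fixed, contradicting the ovoid hypothesis. Parametrising $q = (t,a,t',a',t'')$ via Theorem~\ref{thm:coordinates} and imposing $\theta q = q$ using~\eqref{eq:1-fold} together with the $s_1$-conjugation table, I would eliminate the coordinates that parametrise the known ovoid solutions in $O\setminus\{(\infty)\}$; the residual equation reduces to $z = 1 - z^{-h}$ with $z\in\JJ\setminus\{0\}$. Any solution would produce a fixed point of $\theta$ outside $O$, contradicting the hypothesis on the fixed structure, so the equation is unsolvable. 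The main obstacle will be the explicit computation of $\theta^3$: the root-group bookkeeping is delicate because $U_3$ is central only when $\ssT\equiv 0$, and the signs of table~\eqref{eq:signs} together with the nontrivial behaviour of $s_1^2$ must be tracked carefully as $s_1$ moves past the $x_j$-factors.
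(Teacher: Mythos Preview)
Your normal-form reduction is broadly on the right track, but you are glossing over a step the paper treats carefully: showing that the torus element $h$ acts \emph{trivially} on the long root groups $U_2,U_4,U_6$. This is not formal; the paper establishes it by applying point-domesticity to the specific points $x_2(t)s_{1616}P_1$, $x_4(t)s_{616}P_1$, $x_6(t)s_{616}P_1$ and reading off $t'=t$ from the resulting double-coset computation. Only once this is known do the commutator relations force $h$ to be an hexagonal system automorphism and hence (via $1^h=1$) to commute with $s_1(1)=x_7(1)x_1(1)x_7(1)$.

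The real gap is your derivation of~(2). Once $h$ commutes with $s_1$ and $x_1(1)$, one computes directly that $\theta^3=h^3\cdot(x_1(1)s_1)^3=h^3 s_1^4=h^3\in H$: there are no $U_2$- or $U_4$-components to read off, and no free parameter $a\in\JJ$ appears anywhere in this computation. The identity $\theta^3=1$ therefore yields only $h^3=1$; it cannot produce $\ssT(a)=a+a^h+a^{h^2}$ or $\ssT(a^{\#})=\ssT(a,a^h)$, which are genuine constraints on $h$ beyond having order~$3$ (e.g.\ they fail for $h=1$ unless $\ssT(a)=3a$). The paper obtains these identities by a completely different mechanism: it first writes down the equations characterising the fixed points opposite $(\infty)$, obtaining \eqref{eq:ex1}--\eqref{eq:ex3} for each such fixed $(t,a,u,b,v)$, and then invokes the \emph{covering property of an ovoid}---every point $(a_1,t_1,a_2,t_2)$ at distance~$4$ from $(\infty)$ must be collinear with some ovoid point---to force these equations to hold for \emph{every} $b\in\JJ$. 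This use of the ovoid property is the missing idea in your plan. (Your argument that $\theta^3$ must be trivial because its fixed set properly contains $O$ also needs care: Theorem~\ref{thm:PTM3} only constrains fixed structures of \emph{point-domestic} collineations, and you have not argued that $\theta^3$ is point-domestic.) Finally, for~(3) you are looking in the wrong place: there are by hypothesis no fixed points outside $O$, so the equation $z=1-z^{-h}$ does not arise as a ``residual'' fixed-point condition. In the paper it comes from the fixed-\emph{chamber} calculation $\theta\cdot x_1(z)s_1B=x_1(1-z^{-h})s_1B$, and unsolvability expresses exactly that $\theta$ fixes no line through~$(\infty)$.
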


\begin{proof} Let $P_1=B\cup Bs_1B$ be the parabolic subgroup as in Section~\ref{sec:coordinates}, and so $G/P_1$ is the point set of $\Gamma$. Up to conjugation, we may assume that the points $P_1$ and $w_0P_1$ are fixed by $\theta$ (as $|O|\geq 2$ and $G$ acts strongly transitively on $\Gamma$). Then $\theta P_1=P_1$ gives $\theta\in P_1$, and since $\theta\notin B$ (or else a chamber is fixed) we have $\theta=b_1s_1b_2$ with $b_1,b_2\in B$. Conjugating further, we may assume that $\theta=bs_1$. Then write $b=hux_1(c)$ with $u\in U_6U_5U_4U_3U_2$ and $h\in H$ with $c\in\mathbb{J}$. Conjugating by an element of $H$ we may assume that $c=0$ or $c=1$. Now the fact that $\theta$ fixes $w_0P_1$ gives
$
w_0^{-1}hux_1(c)s_1w_0\in P_1,
$
which in turn gives $w_0^{-1}uw_0\in P_1$. But $w_0^{-1}uw_0\in U_{-6}U_{-5}U_{-4}U_{-3}U_{-2}$, and it follows that $u=1$. Thus $\theta=hx_1(c)s_1$ for some $h\in H$ and $c\in\{0,1\}$.

We claim that $c=1$. For if $c=0$ then $\theta=hs_1$. Consider the chamber $gB=x_6(t)s_6s_1s_6B$ with $t\neq 0$. Then
\begin{align*}
Bg^{-1}\theta gB&=Bs_{616}x_6(-t)hs_1x_6(t)s_{616}B=Bs_{616}x_6(-t)x_2(t')s_{1616}B
\end{align*}
for some $t'\in\FF\backslash\{0\}$. Since $x_6(-t)x_2(t')=x_2(t')x_6(-t)x_4(tt')=x_2(t')x_4(tt')x_6(-t)$ we can move $x_2(t')$ to the left, and $x_6(-t)$ to the right, where they are each absorbed into $B$. Thus
\begin{align*}
Bg^{-1}\theta gB&=Bs_{616}x_4(tt')s_{1616}B=Bx_{12}(\pm tt')s_{16161}B=Bw_0B,
\end{align*}
and so $gB$ is mapped to an opposite chamber, a contradiction. Thus we have shown that, up to conjugation, $\theta=hx_1(1)s_1$ for some $h\in H$.

Define $h:\JJ\to\JJ$ by 
$$
hx_1(a)h^{-1}=x_1(a^h)\quad\text{for $a\in\JJ$}.
$$
We will show that $h:\JJ\to\JJ$ is an hexagonal system automorphism, and that~(\ref{eq:hexauto}) holds. First we show that for $i\in\{2,4,6\}$ and $t\in\FF$ we have $hx_i(t)h^{-1}=x_i(t)$. Consider the case $i=2$. For $t\in\FF$ write $h^{-1}x_2(t)h=x_2(t')$. Let $gP_1=x_2(t)s_{1616}P_1$. Then
\begin{align*}
P_1g^{-1}\theta gP_1&=P_1s_{6161}x_2(-t)hx_1(1)s_1x_2(t)s_{1616}P_1=P_1s_{6161}x_2(-t')x_1(1)x_6(t)s_{616}P_1.
\end{align*}
One now uses commutator relations to push elements of $U_5\cup U_6$ to the left (where they move past $s_{6161}$, remain positive, and are absorbed into $P_1$), and push elements of $U_1\cup U_2\cup U_3$ to the right (where they move past $s_{616}$, remain positive, and are absorbed into $P_1$). Using this strategy, a short calculation gives
\begin{align*}
P_1g^{-1}\theta gP_1&=P_1s_{6161}x_4(t(t-t'))s_{616}P_1.
\end{align*}
If $t(t-t')\neq 0$ then since $s_{616}^{-1}x_4(t(t-t'))s_{616}\in U_{12}^*$ we have $P_1g^{-1}\theta gP_1=P_1s_{61616}P_1$, and so the point $gP_1$ is mapped onto an opposite point, a contradiction. Thus $t'=t$ as required. Very similar calculations apply for the cases $i=4,6$ by considering the points $x_4(t)s_{616}P_1$ and $x_6(t)s_{616}P_1$, respectively.
%Now consider the case $i=4$. Write $h^{-1}x_4(t)h=x_4(t')$, and let $gP_1=x_4(t)s_{616}P_1$. Then
%\begin{align*}
%P_1g^{-1}\theta gP_1&=P_1s_{616}x_4(-t)hx_1(1)s_1x_4(t)s_{616}P_1\\
%&=P_1s_{616}x_4(-t')x_1(1)x_4(t)s_{1616}P_1\\
%&=P_1s_{616}x_4(t-t')s_{1616}P_1,
%\end{align*}
%and as before this forces $t'=t$. 
%
%Finally consider $i=6$. Write $h^{-1}x_6(t)h=x_6(t')$ and let $gP_1=x_6(t)s_{616}P_1$. Then
%\begin{align*}
%P_1g^{-1}\theta gP_1&=P_1s_{616}x_6(-t)hx_1(1)s_1x_6(t)s_{616}P_1=P_1s_{616}x_6(-t')x_1(1)x_2(-t)s_{1616}P_1
%\end{align*}
%and following the strategy from before we have 
%$P_1g^{-1}\theta gP_1=P_1s_{616}x_4(t'(t'-t))s_{1616}P_1$, and so again $t'=t$, completing the proof of the claim.

We return to the proof that $h:\JJ\to\JJ$ is an hexagonal system automorphism. It is clear from the definition that $h:\JJ\to \JJ$ is bijective with $(a+b)^h=a^h+b^h$, and since $0^h=0$ (as $h$ fixes the base apartment) we have $(-a)^h=-a^h$. It follows from the commutator relation $[x_1(a),x_6(t)]$, and the fact that $hx_j(t)h^{-1}=x_j(t)$ for $j\in\{2,4,6\}$ and $t\in\FF$, that $hx_i(a)h^{-1}=x_i(a^h)$ for all $i\in\{1,3,5\}$ and $a\in\JJ$. Then by equations~(\ref{eq:negativert}) this extends to negative root groups too. Moreover,
\begin{align*}
x_2(-t\ssN(a))x_3((ta^{\#})^{h})x_4(t^2\ssN(a))x_5((-ta)^{h})&=[x_1(a),x_6(t)]^h\\
&=[x_1(a^h),x_6(t)]\\
&=x_2(-t\ssN(a))x_3(ta^{h\#})x_4(t^2\ssN(a^h))x_5(-ta^h)
\end{align*}
shows that $(ta)^{h}=ta^h$ for all $t\in\FF$ and $a\in\JJ$, and that $a^{\#h}=a^{h\#}$ for all $a\in\JJ$. Hence $h:\JJ\to\JJ$ is an hexagonal system automorphism and~(\ref{eq:hexauto}) holds.

Now, by assumption $\theta=hx_1(1)s_1$ fixes an ovoid~$O$. Since the point $(\infty)=P_1$ is fixed, all other points of $O$ are opposite the point $P_1$. The points opposite~$P_1$ are of the form $gP_1=x_6(t)x_5(a)x_4(u)x_3(b)x_2(v)s_{61616}P_1$ with $t,u,v\in\FF$ and $a,b\in\JJ$, and a direct calculation with commutator relations gives
\begin{align*}
\theta gP_1&=hx_1(1)s_1x_6(t)x_5(a)x_4(u)x_3(b)x_2(v)s_{61616}P_1\\
&=hx_1(1)x_2(-t)x_3(-a)x_4(u)x_5(b)x_6(v)s_{61616}P_1\\
&=x_6(v)x_5(b^h-v)x_4(\alpha)x_3(\beta)x_2(\gamma)s_{61616}P_1
\end{align*}
where $\alpha=u-tv+v^2-\ssT(a,b)+\ssT(b^{\#})-v\ssT(b)$, $\beta=\ssT(b)-v-b^h-a^h$, and $\gamma=-t+\ssT(b)-v-\ssT(a)$. It follows from these equations that $p=(t,a,u,b,v)$ is fixed by $\theta$ if and only if $v=t$, $a=b^h-t$, and 
\begin{align}
\label{eq:ex1}\ssT(b,b^h)&=\ssT(b^{\#})\\
\label{eq:ex2}\ssT(b)&=b+b^h+b^{h^2}\\
\label{eq:ex3}\ssT(b^h)&=\ssT(b).
\end{align}
Recall that an ovoid $O$ has the property that each point of $\Gamma$ is at distance at most~$2$ (in the incidence graph) from a point of the ovoid. Consider the points $(a_1,t_1,a_2,t_2)$ with $a_1,a_2\in\JJ$ and $t_1,t_2\in\FF$. These points are at distance $4$ from $P_1=(\infty)$, and hence must be at distance exactly $2$ from one of the above fixed points of $\theta$. Thus there is a line $[a_1,t_1,a_2,t_2,a_3]$ (with $a_3\in\JJ$) containing one of the above fixed points. The equations in Theorem~\ref{thm:coordinates} then imply that for each $a\in\JJ$ there must be a fixed point $(\cdot,a,\cdot,\cdot,\cdot)\in O$. This in turn implies that equations~(\ref{eq:ex1})--(\ref{eq:ex3}) hold for all $b\in\JJ$. Hence statement (2) of the theorem holds.

Since $\ssT(a)=a+a^h+a^{h^2}$ and $\ssT(a^h)=\ssT(a)$ for all $a\in\JJ$ we have
$
a+a^h+a^{h^2}=a^h+a^{h^2}+a^{h^3},
$
from which it follows that $h^3=1$ (as an hexagonal system automorphism). Thus $h^3x_i(c)h^{-3}=x_i(c)$ for all $i$ and $c$, and so $h^3=1$ (as an element of~$G$). Then
\begin{align*}
\theta^3&=h^3x_1(1)s_1x_1(1)s_1x_1(1)s_1=s_1x_7(1)x_1(1)x_7(1)s_1^2=s_1^4
\end{align*}
and so $\theta^3=1$ (as $s_1^4=1$, as noted in Section~\ref{sec:Moufang}), hence (1).

Finally, since $\theta$ fixes no lines it fixes no chambers. Consider the chamber $gB=x_1(z)s_1B$. We have
\begin{align*}
\theta gB=hx_1(1)s_1x_1(z)s_1B=hx_1(1)x_7(z)B=hx_1(1)x_1(-z^{-1})s_1B=x_1(1-z^{-h})s_1B.
\end{align*}
Thus the equation $z=1-z^{-h}$ has no solution in~$\JJ$, proving (3).
\end{proof}

The following theorem gives a converse to Theorem~\ref{thm:ovoids1}.

\begin{thm}\label{thm:allexamples}
Let $(\JJ,\FF,\#)$ be an hexagonal system with Moufang hexagon $\Gamma$. Suppose there exists an hexagonal system automorphism $h:\JJ\to\JJ$ of order $1$ or $3$ such that $\ssT(a)=a+a^h+a^{h^2}$ and $\ssT(a^{\#})=\ssT(a,a^h)$ for all $a\in\JJ$. Then the automorphism $\theta=hx_1(1)s_1$ of $\Gamma$ is point-domestic. 
\end{thm}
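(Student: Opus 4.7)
The plan is to prove point-domesticity directly via the triangle inequality: if $p$ is any point and $q\in F$ is fixed with $d(p,q)\le 2$ in the incidence graph, then
\[
d(p,\theta p)\;\le\;d(p,q)+d(q,\theta p)\;=\;d(p,q)+d(\theta q,\theta p)\;=\;2\,d(p,q)\;\le\;4\;<\;6,
\]
so $p$ is not opposite~$\theta p$. Hence it suffices to show that the fixed point set of $\theta$ distance-$2$ dominates the point set of~$\Gamma$.

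First, by repeating the calculation from the proof of Theorem~\ref{thm:ovoids1} and substituting the hypotheses $\ssT(a)=a+a^h+a^{h^2}$ and $\ssT(a^{\#})=\ssT(a,a^h)$ (which also force $h^3=\id$ and hence $\theta^3=1$), one verifies that the set
\[
F\;=\;\{(\infty)\}\cup\{(t,\,b^h-t,\,u,\,b,\,t):t,u\in\FF,\,b\in\JJ\}
\]
consists of fixed points of~$\theta$. Points at distance $\le 2$ from $(\infty)$ are handled by the choice $q=(\infty)\in F$.

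For a distance-$6$ point $p=(t,a,u,b,v)$, if $a=b^h-t$ then the line $[t,a,u,b]$ through~$p$ already contains the fixed point $(t,a,u,b,t)\in F$; otherwise, parametrising the remaining lines through~$p$ by $b_*\in\JJ$ via~\eqref{eq:system21}--\eqref{eq:system24} and requiring one of their distance-$6$ points to lie in~$F$ reduces the problem to solving
\[
(t-t_*)\ssN(b_*)=t_*-v,\qquad (t-t_*)\bigl(b_*-(b_*^h)^{\#}\bigr)=b^h-a-t_*,
\]
for $(b_*,t_*)\in\JJ\times\FF$. An analogous reduction handles the distance-$4$ case. The main obstacle will be verifying solvability of this system in every case: I anticipate dividing the two equations to express $b_*$ explicitly in terms of the coordinates of~$p$, after which a scalar equation determines~$t_*$; the solvability then follows from the axioms of Section~\ref{sec:Moufang} combined with the trace identities hypothesised on~$h$, handled via the case analysis (H1)--(H4).
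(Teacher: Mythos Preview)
Your triangle-inequality strategy is sound, and your reduction to the two equations
\[
(t-t_*)\ssN(\beta)=t_*-v,\qquad (t-t_*)\bigl(\beta-(\beta^h)^{\#}\bigr)=a'^h-a-t_*
\]
is correct (I checked it from the coordinatisation). The genuine gap is the final step: you have not shown this system is solvable for arbitrary $(t,a,a',v)$, and the proposed method (``divide the two equations, then case analysis (H1)--(H4)'') is not a proof. One equation is scalar-valued in~$\FF$ and the other is $\JJ$-valued, so ``dividing'' only makes sense after eliminating~$t_*$, leaving the single nonlinear equation
\[
\beta-(\beta^h)^{\#}-1\;=\;d\,(\ssN(\beta)+1),\qquad d=\frac{a'^h-a-t}{t-v}\in\JJ,
\]
whose surjectivity in $\beta$ you would need to establish. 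In the $1/\FF$ case one can check that $\beta=-(1+d)/d$ works, but this ansatz does not carry over verbatim to systems with $\dim\JJ>1$, and you give no argument there. The distance-$4$ case, which you defer as ``analogous'', also remains to be written out.

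By contrast, the paper avoids solving any such system: it computes $P_1g^{-1}\theta gP_1$ directly for a generic point $gP_1$ opposite the base point~$P_1$, using commutator relations and the folding formulae~(\ref{eq:1-fold})--(\ref{eq:6-fold}). The hypotheses $\ssT(a)=a+a^h+a^{h^2}$ and $\ssT(a^{\#})=\ssT(a,a^h)$ cause the expression to collapse to $P_1w_0x_6(f)x_5(\gamma)x_4(0)x_3(\gamma^h)x_2(f+\ssT(\gamma))w_0P_1$ with $f=t''-t$ and $\gamma=a'^h-a-t''$, from which one reads off that $d(gP_1,\theta gP_1)\le 4$ in every case, uniformly over all hexagonal systems (invoking \cite[Lemma~4.1]{PVM:19b} to reduce to points opposite~$P_1$ when $|\FF|>2$). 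Your route, if completed, would essentially reprove that the fixed set is one of the three configurations of Theorem~\ref{thm:PTM3}; this is equivalent to point-domesticity but no easier, and appears to force the very case split the paper's computation avoids.
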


\begin{proof}
Assume first that $|\FF|>2$. By \cite[Lemma~4.1]{PVM:19b} it suffices to show that no point opposite the base point~$P_1$ is mapped onto an opposite point by~$\theta$. A generic such point $p=(t,a,t',a',t'')$ in the $BN$-pair language is
$$
gP_1=x_6(t)x_5(a)x_4(t')x_3(a')x_2(t'')w_0P_1\quad\text{with $t,t',t''\in\FF$ and $a,a'\in\JJ$}.
$$
By~(\ref{eq:measuredistance}) point-domesticity is equivalent to the statement that $P_1g^{-1}\theta gP_1\neq P_1s_{61616}P_1$. 

Note that the formula $\ssT(c)=c+c^h+c^{h^2}$ and the fact that $h$ has order $1$ or $3$ implies that $\ssT(c^h)=\ssT(c)$ for all $c\in\JJ$.  A lengthy but straightforward calculation with commutator relations, using the formulae $\ssT(c)=c+c^h+c^{h^2}$, $\ssT(c^h)=\ssT(c)$, and $\ssT(c^{\#})=\ssT(c,c^h)$, shows that
\begin{align*}
P_1g^{-1}\theta gP_1&=P_1w_0x_6(f)x_5(\gamma)x_4(0)x_3(\gamma^h)x_2(f+\ssT(\gamma))w_0P_1,
\end{align*}
where $f=t''-t$ and $\gamma=a'^h-a-t''$.

Suppose first that $f\neq 0$. By~(\ref{eq:6-fold}) we have
\begin{align*}
P_1g^{-1}\theta gP_1&=P_1w_0x_{12}(-f^{-1})s_6(f)x_{12}(-f^{-1})x_5(\gamma)x_4(0)x_3(\gamma^{h})x_2(f+\ssT(\gamma))w_0P_1\\
&=P_1w_0s_6(f)x_{12}(-f^{-1})x_5(\gamma)x_4(0)x_3(\gamma^{h})x_2(f+\ssT(\gamma))w_0P_1.
\end{align*}
Note that $P_1w_0s_6(f)=P_1w_0s_6$ (as $H\leq P_1$). Using the formulae in~(\ref{eq:negativert}) to push the $x_{12}(-f^{-1})$ term to the right (where it is absorbed into $P_1$), we obtain (after some calculation)
\begin{align*}
P_1g^{-1}\theta gP_1&=P_1w_0s_6x_5(\gamma)x_4(f^{-1}\ssN(\gamma))
x_3(f^{-1}\gamma^{\#}+\gamma^{h})x_2(f_1)w_0P_1\\
&=P_1w_0x_1(-\gamma)x_2(f^{-1}\ssN(\gamma))
x_3(f^{-1}\gamma^{\#}+\gamma^{h})x_4(-f_1)s_{1616}P_1,
\end{align*}
where $f_1=f+\ssT(\gamma)+f^{-2}\ssN(\gamma)+f^{-1}\ssT(\gamma^{\#})$. 

Now suppose further that $\gamma\neq 0$. Similarly to the above, by~(\ref{eq:1-fold}) we have
\begin{align*}
P_1g^{-1}\theta gP_1&=P_1w_0s_1x_7(\gamma^{-1})x_2(f^{-1}\ssN(\gamma))
x_3(f^{-1}\gamma^{\#}+\gamma^{h})x_4(-f_1)s_{1616}P_1.
\end{align*}
Using the formulae in~(\ref{eq:negativert}) to push the $x_7(\gamma^{-1})$ term to the right (where it is absorbed into $P_1$) we obtain
\begin{align*}
P_1g^{-1}\theta gP_1&=P_1w_0s_1x_2(f^{-1}\ssN(\gamma))x_3(\gamma^{h})x_4(-f)s_{1616}P_1\\
&=P_1w_0x_6(f^{-1}\ssN(\gamma))x_5(\gamma^{h})x_4(-f)s_{616}P_1.
\end{align*}
Since $f^{-1}\ssN(\gamma)\neq 0$ we can use~(\ref{eq:6-fold}) again, giving
\begin{align*}
P_1g^{-1}\theta gP_1&=P_1w_0s_6x_{12}(-f\ssN(\gamma)^{-1})x_5(\gamma^{h})x_4(-f)s_{616}P_1\\
&=P_1w_0s_6x_5(\gamma^{h})x_4(0)s_{616}P_1\\
&=P_1w_0x_1(-\gamma^{h})s_{16}P_1,
\end{align*}
and using~(\ref{eq:1-fold}) gives
\begin{align*}
P_1g^{-1}\theta gP_1&=P_1w_0s_1x_7(\gamma^{-h})s_{16}P_1=P_1w_0s_{6}P_1=P_1s_{616}P_1,
\end{align*}
showing that the point $gP_1$ is mapped by $\theta$ to distance $4$ from $gP_1$ (see~(\ref{eq:measuredistance})). 

Simpler calculations show that 
\begin{align*}
P_1g^{-1}\theta gP_1&=\begin{cases}
P_1&\text{if $f=\gamma=0$}\\
P_1s_{616}P_1&\text{if either $f=0$ and $\gamma\neq 0$, of $f\neq 0$ and $\gamma=0$,}
\end{cases}
\end{align*}
completing the proof for the case $|\FF|>2$.

If $|\FF|=2$ then $\Gamma$ is either the dual split Cayley hexagon with parameters $(2,2)$, or is the triality hexagon with parameters $(2,8)$. In the first case $h$ is necessarily trivial, and in the second case $h$ is a nontrivial element of the Galois group of the cubic extension $\FF_8/\FF_2$. The results are easily verified in these cases. 
\end{proof}

\begin{cor}\label{cor:noovoidsH2}
No nontrivial collineation of a Moufang hexagon in class $(\mathrm{H2})$ fixes pointwise an ovoid. 
\end{cor}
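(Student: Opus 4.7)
The plan is to argue by contradiction using Theorem~\ref{thm:ovoids1}. Suppose $\Gamma$ is a Moufang hexagon in class $(\mathrm{H2})$ with hexagonal system $(\EE,\FF,\#)$, and that some nontrivial collineation $\theta$ fixes an ovoid $O$ pointwise. Since the fixed element structure is an ovoid, $\theta$ is point-domestic by Theorem~\ref{thm:PTM3}, and Theorem~\ref{thm:ovoids1} puts $\theta$ in the form $hx_1(1)s_1$ for some hexagonal system automorphism $h\colon\EE\to\EE$ with $h^3=1$, satisfying $\ssT(a)=a+a^h+a^{h^2}$ and $\ssT(a^{\#})=\ssT(a,a^h)$ for all $a\in\EE$. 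Crucially, the equation $z=1-z^{-h}$ must have no solution $z\in\EE^{\times}$. I will identify $h$ as a Galois automorphism of $\EE/\FF$ and then exhibit an explicit solution to this equation, producing the desired contradiction. The main technical input beyond Theorem~\ref{thm:ovoids1} is a Hua-type rigidity statement pinning down $h$.

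First I pin down $h$. An hexagonal system automorphism of $(\EE,\FF,\#)$ is an $\FF$-linear bijection commuting with $\#$. Combining $h\#=\#h$ with the identities $1^{\#}=1$ and $a^{\#}=\ssN(a)a^{-1}$ (for $a\in\EE^{\times}$) forces $h(1)=1$ and $h(a^{-1})=h(a)^{-1}$, and Hua's theorem (applicable since $\EE$ is a commutative field) then implies that $h$ is a field automorphism of $\EE$ fixing $\FF$ pointwise, so $h\in\mathrm{Gal}(\EE/\FF)$. In the ${^6}\sD_4$ case this Galois group is trivial, while in the ${^3}\sD_4$ case it is generated by $\sigma$ of order three. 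The possibility $h=1$ is incompatible with the trace condition $\ssT(a)=3a$: when $\kar\FF\neq 3$, $3a\notin\FF$ for $a\in\EE\setminus\FF$ whereas $\ssT(a)\in\FF$, and when $\kar\FF=3$ the consequent identity $\ssT\equiv 0$ contradicts non-degeneracy of the trace form of the separable extension $\EE/\FF$. This eliminates ${^6}\sD_4$ entirely and reduces ${^3}\sD_4$ to $h\in\{\sigma,\sigma^2\}$; by symmetry I may assume $h=\sigma$.

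For the construction, choose $\gamma\in\EE\setminus\{0\}$ with $\ssT(\gamma)=\gamma+\gamma^{\sigma}+\gamma^{\sigma^2}=0$, which exists since the trace-zero subspace has $\FF$-dimension $2$, and set $z=-\gamma/\gamma^{\sigma}\in\EE^{\times}$. A direct computation using $\ssT(\gamma)=0$ yields
\[
z^{\sigma}(z-1)=\left(-\frac{\gamma^{\sigma}}{\gamma^{\sigma^2}}\right)\left(-\frac{\gamma+\gamma^{\sigma}}{\gamma^{\sigma}}\right)=\frac{\gamma+\gamma^{\sigma}}{\gamma^{\sigma^2}}=\frac{-\gamma^{\sigma^2}}{\gamma^{\sigma^2}}=-1,
\]
which rearranges to $z=1-z^{-\sigma}$. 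To confirm $z\neq 1$, note that $z=1$ would force $\gamma+\gamma^{\sigma}=0$, whence $\gamma^{\sigma^2}=-(\gamma+\gamma^{\sigma})=0$ by the trace condition, contradicting $\gamma\neq 0$. This $z$ violates Theorem~\ref{thm:ovoids1}(3), completing the argument.
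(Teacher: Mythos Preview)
Your argument is correct and follows essentially the same route as the paper's proof: reduce via Theorem~\ref{thm:ovoids1} to an hexagonal system automorphism $h$, identify $h$ as a Galois automorphism (you invoke Hua's theorem, the paper cites Jacobson for the same conclusion), rule out $h=1$ and hence the ${^6}\sD_4$ case, and then in the ${^3}\sD_4$ case exhibit a trace-zero element $\gamma$ and check that $z=-\gamma/\gamma^{\sigma}$ solves $z=1-z^{-\sigma}$, contradicting Theorem~\ref{thm:ovoids1}(3). The paper builds its trace-zero element as $b=a-a^{h}$ and writes the solution as $z_0=-b^{h^2}b^{-1}$, which is the same construction up to relabelling. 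One small remark: your deduction of $h(1)=1$ from $h\#=\#h$ alone only gives $h(1)^2=1$; to pin down $h(1)=1$ you should also use $h^3=1$ (or the trace identity at $a=1$), which immediately forces the sign.
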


\begin{proof}
Let $\Gamma$ be a Moufang hexagon in class (H2) with hexagonal system $(\EE,\FF,\#)$. Thus $\EE$ is a separable cubic field extension of $\FF$ (normal in the case ${^3}\sD_4$, and not normal in the case ${^6}\sD_4$). Suppose that $\theta$ is a nontrivial collineation pointwise fixing an ovoid. Then by Theorem~\ref{thm:ovoids1} we have, up to conjugation, $\theta=hx_1(1)s_1$ where $h:\EE\to\EE$ is an hexagonal system automorphism of order $1$ or $3$ with $\ssT(a)=a+a^h+a^{h^2}$ and $\ssT(a^{\#})=\ssT(a,a^h)$ for all $a\in\EE$. We have $h\neq 1$ (for otherwise $\ssT(a)=3a$ for all $a\in\EE$), and so $h$ has order~$3$.

By~\cite[p.2]{Jac:68} the vector space automorphism $h:\EE\to\EE$ is in fact an element of $\mathrm{Gal}(\EE/\FF)$. Since $h$ has order $3$ the ${^6}\sD_4$ case is eliminated (as the Galois group is trivial in this case). Thus $\Gamma$ is of type ${^3}\sD_4$. Choose any $a\in\JJ$ with $a^h\neq a$ (such $a$ exists as $h\neq 1$), and let $b=a-a^{h}$. Then $b\neq 0$ and $\ssT(b)=\ssT(a)-\ssT(a^h)=0$. Let $z_0=-b^{h^2}b^{-1}$. Since $h$ is a field automorphism we have
$$
z_0-1+z_0^{-h}=-b^{h^2}b^{-1}-1-b^hb^{-1}=-(b^{h^2}+b+b^h)b^{-1}=-\ssT(b)b^{-1}=0,
$$
contradicting Theorem~\ref{thm:ovoids1}(3).
\end{proof}

We finally turn our attention to hexagons in class (H3). We first discuss the connection between hexagonal systems of type $9\KK/\FF$ and those of type $9/\KK$. Let $(\JJ,\FF,\#)$ be an hexagonal system of type $9\KK/\FF$. By \cite[(15.39), (15.41)]{TW:02} the hexagonal system $(\JJ,\FF,\#)$ embeds into an hexagonal system $(\JJ_{\KK},\KK,\#)$ of type $9/\KK$, where $\JJ_{\KK}=\JJ\otimes_{\FF}\KK$ and $\KK/\FF$ is a quadratic Galois extension. The extension is determined by first choosing any $\delta\in\KK\backslash\FF$, and then defining $\#$ on $\JJ_{\KK}$ by 
$$
(a+b\delta)^{\#}=a^{\#}+(a\times b)\delta+b^{\#}\delta^2\quad\text{for $a,b\in\JJ$}.
$$
Recall from \cite[(15.5),(15.22)]{TW:02} that $\JJ_{\KK}$ has the algebraic structure of a cyclic division algebra of degree three with centre~$\KK$.

\begin{lemma}\label{lem:extensionh}
Let $(\JJ,\FF,\#)$ be an hexagonal system of type $9\KK/\FF$ and let $(\JJ_{\KK},\KK,\#)$ be the associated hexagonal system of type $9/\KK$ described above. Suppose that $h:\JJ\to\JJ$ is an hexagonal system automorphism of order~$3$ with $\ssT(a)=a+a^h+a^{h^2}$ and $\ssT(a^{\#})=\ssT(a,a^h)$ for all $a\in\JJ$. Then $h$ extends to an order $3$ hexagonal system automorphism $h:\JJ_{\KK}\to\JJ_{\KK}$ with $\ssT(\alpha)=\alpha+\alpha^h+\alpha^{h^2}$ and $\ssT(\alpha^{\#})=\ssT(\alpha,\alpha^h)$ for all $\alpha\in\JJ_{\KK}$.
\end{lemma}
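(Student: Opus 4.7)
The plan is to define $h$ on $\JJ_\KK = \JJ\otimes_\FF\KK$ by extending it to be the identity on the $\KK$-factor; concretely,
$$
(a + b\delta)^h = a^h + b^h\delta \quad \text{for } a, b \in \JJ.
$$
This is a $\KK$-linear bijection of order $3$ on $\JJ_\KK$ (since $h^3 = 1$ on $\JJ$ and $h$ fixes $\KK$ pointwise). To verify that $h$ commutes with $\#$ on $\JJ_\KK$ I would use the defining formula for $\#$ together with the fact that $h$ respects both $\#$ and $\times$ on $\JJ$ and is $\KK$-linear (so passes freely past $\delta$ and $\delta^2$):
$$
\bigl((a+b\delta)^\#\bigr)^h = (a^\#)^h + (a\times b)^h\delta + (b^\#)^h\delta^2 = (a^h)^\# + (a^h\times b^h)\delta + (b^h)^\#\delta^2 = \bigl((a+b\delta)^h\bigr)^\#.
$$

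For the two trace identities I would first observe that the unary trace $\ssT$, the bilinear form $\ssT(\cdot,\cdot)$ and the cross $\times$ on $\JJ_\KK$ restrict on $\JJ$ to the corresponding structures on $\JJ$: the compatibility of $\#$ has just been established, and the rest follows from the polarisation formulae $a\times b = (a+b)^\# - a^\# - b^\#$ and $\ssT(x) = x + 1\times x$ together with $\KK$-linearity of the unary trace (which comes from $\ssT(x) = \ssT(x,1)$ and $\KK$-bilinearity of the form). Consequently
$$
\ssT_{\JJ_\KK}(a+b\delta) = \ssT_\JJ(a) + \ssT_\JJ(b)\,\delta,
$$
and the first identity $\ssT_{\JJ_\KK}(\alpha) = \alpha + \alpha^h + \alpha^{h^2}$ for $\alpha = a+b\delta$ reduces immediately to the analogous identity on $\JJ$ applied to $a$ and $b$ separately.

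The second identity $\ssT_{\JJ_\KK}(\alpha^\#) = \ssT_{\JJ_\KK}(\alpha, \alpha^h)$ is the main step. Expanding both sides using $\KK$-linearity, $\KK$-bilinearity, and the restriction statement just described yields
$$
\ssT_{\JJ_\KK}(\alpha^\#) = \ssT_\JJ(a^\#) + \ssT_\JJ(a\times b)\,\delta + \ssT_\JJ(b^\#)\,\delta^2
$$
and
$$
\ssT_{\JJ_\KK}(\alpha, \alpha^h) = \ssT_\JJ(a,a^h) + \bigl(\ssT_\JJ(a,b^h) + \ssT_\JJ(b,a^h)\bigr)\delta + \ssT_\JJ(b,b^h)\,\delta^2.
$$
The $\delta^{0}$- and $\delta^{2}$-coefficients agree directly by the hypothesis on $h$. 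The only place where any real work is required is the middle coefficient, i.e.\ the polarised identity
$$
\ssT_\JJ(a\times b) = \ssT_\JJ(a, b^h) + \ssT_\JJ(b, a^h),
$$
which I expect to be the crux of the argument. This I would obtain by applying the hypothesis $\ssT_\JJ(x^\#) = \ssT_\JJ(x, x^h)$ to $x = a+b$, expanding the left-hand side with $(a+b)^\# = a^\# + a\times b + b^\#$ and the right-hand side with $(a+b)^h = a^h + b^h$ and bilinearity, and cancelling the four terms that already match by hypothesis. Everything else is bookkeeping.
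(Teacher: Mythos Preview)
Your proposal is correct and follows essentially the same route as the paper's proof: the same extension $(a+b\delta)^h=a^h+b^h\delta$, the same reduction of the two trace identities to the coefficients of $1,\delta,\delta^2$, and the same polarisation argument (apply $\ssT(x^{\#})=\ssT(x,x^h)$ to $x=a+b$) for the middle coefficient. The only cosmetic difference is that the paper expands $\ssT_{\JJ_\KK}(\alpha,\alpha^h)$ via the division-algebra identity $\ssT(\alpha,\beta)=\ssT(\alpha\beta)$ available in type $9/\KK$, whereas you use $\KK$-bilinearity of the trace form directly; both lead to the same three coefficients.
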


\begin{proof}
Define $(a+b\delta)^{h}=a^h+b^h\delta$ for $a,b\in\JJ$. Let $X^2-tX-s$ be the minimal polynomial of $\delta$ over~$\FF$. Then
\begin{align*}
(a+b\delta)^{\#h}&=((a^{\#}+sb^{\#})+(a\times b+tb^{\#})\delta)^{h}=(a^{\#}+sb^{\#})^h+(a\times b+tb^{\#})^h\delta,
\end{align*}
and since $h:\JJ\to\JJ$ is an hexagonal system automorphism it follows that 
$$
(a+b\delta)^{\#h}=(a^{h\#}+sb^{h\#})+(a^h\times b^h+tb^{h\#})\delta=(a+b\delta)^{h\#}.
$$
Thus $h:\JJ_{\KK}\to\JJ_{\KK}$ is an hexagonal system automorphism, and it is clear that it has order~$3$. 

Since $\ssT(\alpha)=\ssT(a)+\ssT(b)\delta$ if $\alpha=a+b\delta$ we have
$
\ssT(\alpha)=\alpha+\alpha^{h}+\alpha^{h^2}
$
for all $\alpha\in\JJ_{\KK}$. Moreover, since $\ssT(\alpha,\beta)=\ssT(\alpha\beta)$ in systems of type $9/\KK$ (see \cite[(15.6), (15.22)]{TW:02}) we have 
\begin{align*}
\ssT(\alpha,\alpha^h)&=\ssT(a+b\delta,a^h+b^h\delta)\\
&=\ssT(aa^h+(ab^h+ba^h)\delta+bb^h\delta^2)\\
&=\ssT(a,a^h)+\ssT(ab^h+ba^h)\delta+\ssT(b,b^h)\delta^2\\
&=\ssT(a^{\#})+[\ssT(a,b^h)+\ssT(b,a^h)]\delta+\ssT(b^{\#})\delta^2.
\end{align*}
But since $a\times b=(a+b)^{\#}-a^{\#}-b^{\#}$ (see \cite[(15.15)]{TW:02}), we have
\begin{align*}
\ssT(a\times b)&=\ssT((a+b)^{\#})-\ssT(a^{\#})-\ssT(b^{\#})\\
&=\ssT(a+b,a^h+b^h)-\ssT(a^{\#})-\ssT(b^{\#})\\
&=\ssT(a,a^h)+\ssT(a,b^h)+\ssT(b,a^h)+\ssT(b,b^h)-\ssT(a^{\#})-\ssT(b^{\#})\\
&=\ssT(a,b^h)+\ssT(b,a^h).
\end{align*}
Thus 
\begin{align*}
\ssT(\alpha,\alpha^h)&=\ssT(a^{\#})+\ssT(a\times b)\delta+\ssT(b^{\#})\delta^2=\ssT(a^{\#}+(a\times b)\delta+b^{\#}\delta^2)=\ssT(\alpha^{\#}),
\end{align*}
completing the proof.
\end{proof}

Recall from \cite[(15.5), (15.9), (15.22)]{TW:02} the explicit model of the $9/\FF$ hexagonal systems. In particular there is a cubic Galois extension $\EE/\FF$, a generator $\sigma\in\mathrm{Gal}(\EE/\FF)$, and an element $\gamma\in\FF\backslash\ssN(\EE)$ such that each element $\alpha\in\JJ$ can be written in a unique way as
$$
\alpha=a+by+cy^2\quad\text{with $a,b,c\in\EE$}
$$
with multiplication given by the rules
$$
y^3=\gamma\quad\text{and}\quad ya=a^{\sigma}y\quad\text{for all $a\in\EE$}.
$$
We shall record this situation by denoting the division algebra $\JJ$ by $\JJ=(\EE,\sigma,\gamma)$. We have
\begin{align}
\label{eq:adjoint}(a+by+cy^2)^{\#}&=(a^{\#}-\gamma b^{\sigma}c^{\sigma^2})+(\gamma c^{\sigma\#}-a^{\sigma^2}b)y+(b^{\sigma^2\#}-a^{\sigma}c)y^2.
\end{align}

\begin{lemma}\label{lem:basis}
Let $(\JJ,\FF,\#)$ be an hexagonal system of type $9/\FF$ with $\JJ=(\EE,\sigma,\gamma)$. Let $e\in\EE\backslash \FF$ and $z\in\JJ\backslash\EE$. The elements 
$
1,e,e^{\#},z,z^{\#},e\times z,e\times z^{\#},e^{\#}\times z,e^{\#}\times z^{\#}
$
form a basis of~$\JJ$.
\end{lemma}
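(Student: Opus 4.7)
The plan is to show that the nine listed elements are $\FF$-linearly independent; since $\dim_\FF \JJ = 9$ for hexagonal systems of type $9/\FF$, this will suffice. I first verify that $\{1, e, e^{\#}\}$ is an $\FF$-basis of $\EE$: because $\EE = \FF(e)$ is cubic, $\{1, e, e^2\}$ is already a basis, and the minimal polynomial of $e$ over $\FF$ yields $e^{\#} = e^{\sigma} e^{\sigma^2} = e^2 - \ssT(e)\,e + \ssT(e^{\#})$, giving a unit-triangular change-of-basis. The identical argument applied to $e^{\sigma^2} \in \EE \setminus \FF$ shows $\{1, e^{\sigma^2}, e^{\sigma^2 \#}\}$ is also an $\FF$-basis of $\EE$.

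I then use the decomposition $\JJ = \EE \oplus \EE y \oplus \EE y^2$ and consider the projection $\pi : \JJ \to \EE y \oplus \EE y^2$. Writing $z = a + by + cy^2$, the hypothesis $z \notin \EE$ forces $(b,c) \neq (0,0)$; formula~(\ref{eq:adjoint}) gives the $y$- and $y^2$-coefficients of $z^{\#}$ as $b' = \gamma c^{\sigma\#} - a^{\sigma^2} b$ and $c' = b^{\sigma^2\#} - a^{\sigma} c$. A short direct computation from~(\ref{eq:adjoint}) applied to $\alpha \times w = (\alpha+w)^{\#} - \alpha^{\#} - w^{\#}$ yields, for any $\alpha \in \EE$ and $w = \tilde a + \tilde b\, y + \tilde c\, y^2$, the key formula
\[
\pi(\alpha \times w) = -\alpha^{\sigma^2}\tilde b\, y \;-\; \alpha^{\sigma}\tilde c\, y^2.
\]

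Assume now a dependence $\alpha_1 + \alpha_2 e + \alpha_3 e^{\#} + \beta_1 z + \beta_2 (e \times z) + \beta_3 (e^{\#} \times z) + \delta_1 z^{\#} + \delta_2 (e \times z^{\#}) + \delta_3 (e^{\#} \times z^{\#}) = 0$ with all coefficients in $\FF$, and set $\tau = \beta_1 - \beta_2 e^{\sigma^2} - \beta_3 e^{\sigma^2\#}$ and $\tau' = \delta_1 - \delta_2 e^{\sigma^2} - \delta_3 e^{\sigma^2\#}$. Using that $\sigma$ commutes with $\#$ on $\EE$ (so $\tau^{\sigma^2} = \beta_1 - \beta_2 e^{\sigma} - \beta_3 e^{\sigma\#}$, and analogously for $\tau'$), the $y$- and $y^2$-components of $\pi$ applied to this relation read $\tau b + \tau' b' = 0$ and $\tau^{\sigma^2} c + \tau'^{\sigma^2} c' = 0$. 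Substituting the expressions for $b', c'$ and applying $\sigma$ to the second equation rearranges these into the $2 \times 2$ system
\[
\begin{pmatrix} b & \gamma c^{\sigma\#} \\ c^{\sigma} & b^{\#} \end{pmatrix} \begin{pmatrix} \mu \\ \tau' \end{pmatrix} = 0, \qquad \mu := \tau - \tau' a^{\sigma^2}.
\]

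The determinant equals $b b^{\#} - \gamma c^{\sigma} c^{\sigma\#} = \ssN(b) - \gamma\ssN(c)$, which is nonzero by the defining condition $\gamma \notin \ssN(\EE)$: if $c \neq 0$ then $\ssN(b) = \gamma\ssN(c)$ would give $\gamma = \ssN(bc^{-1}) \in \ssN(\EE)$, and if $c = 0$ then $b \neq 0$ forces $\ssN(b) \neq 0$. Hence $\mu = \tau' = 0$, and therefore $\tau = 0$ as well. Linear independence of $\{1, e^{\sigma^2}, e^{\sigma^2\#}\}$ over $\FF$ then forces $\delta_1 = \delta_2 = \delta_3 = 0$ and $\beta_1 = \beta_2 = \beta_3 = 0$, and the residual equation $\alpha_1 + \alpha_2 e + \alpha_3 e^{\#} = 0$ yields $\alpha_1 = \alpha_2 = \alpha_3 = 0$ by the first paragraph. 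The main obstacle is the Galois bookkeeping required to recognize that the two projection equations are interchanged by $\sigma$ and to organize them into the $2 \times 2$ system whose determinant is $\ssN(b) - \gamma\ssN(c)$ — the point at which the non-norm condition $\gamma \notin \ssN(\EE)$ enters decisively.
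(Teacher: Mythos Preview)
Your proof is correct. It shares the paper's essential ingredients --- the decomposition $\JJ=\EE\oplus\EE y\oplus\EE y^2$, the formula $\pi(\alpha\times w)=-\alpha^{\sigma^2}\tilde b\,y-\alpha^{\sigma}\tilde c\,y^2$, and the decisive nonvanishing $\ssN(b)-\gamma\ssN(c)\neq 0$ coming from $\gamma\notin\ssN(\EE)$ --- but the organisation is different. The paper first makes two reductions (it assumes the $\EE$-component $a$ of $z$ is zero, and replaces $z^{\#}$ by $z'=z^2-\gamma(bc^{\sigma}+b^{\sigma}c)$), and then proves \emph{spanning} by explicitly constructing, for arbitrary $g\in\EE$, elements $g_1,g_2\in\EE$ with $g_1\times z-g_2\times z'=gy$. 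You instead prove \emph{linear independence} directly for the original nine elements, absorbing the $a$-dependence into $\mu=\tau-\tau'a^{\sigma^2}$ and packaging the two projection equations as a $2\times2$ linear system over $\EE$ whose determinant is exactly $\ssN(b)-\gamma\ssN(c)$. Your route avoids the preliminary reductions and is somewhat cleaner; the paper's route is more constructive. Both are short and hinge on the same algebraic fact.
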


\begin{proof}
Write $z=a+by+cy^2$. Since $\{1,e,e^{\#}\}$ span $\EE$ (because the span is closed under $\#$, has dimension at most $3$, and strictly contains $\FF$) it is clear that we may assume that $a=0$. Moreover, by \cite[(15.6)(iv) and (15.15)(xi)]{TW:02} we have $z^2=\ssT(z)z-1\times z^{\#}=\ssT(z)z-\ssT(z^{\#})+z^{\#}$ and so
$
z^{\#}=z^2-\ssT(z)z+\ssT(z^{\#}),
$
and so we may replace each occurrence of $z^{\#}$ in the spanning set by $z^2$ without changing the vector space spanned (recall that $u\times v=(u+v)^{\#}-u^{\#}-v^{\#}$). We compute $
z^2=\gamma(bc^{\sigma}+b^{\sigma}c)+\gamma cc^{\sigma^2}y+bb^{\sigma}y^2$, 
and thus we may further replace each occurrence of $z^2$ in the spanning set with 
$
z'=z^2-\gamma(bc^{\sigma}+b^{\sigma}c)=\gamma cc^{\sigma^2}y+bb^{\sigma}y^2$
without changing the space spanned. Thus we must show that $\JJ$ is spanned by the elements $1,e,e^{\#},z,z',e\times z,e\times z',e^{\#}\times z,e^{\#}\times z'$. Let $\mathbb{V}$ be the span of these elements. Since $z=\ssT(z)-1\times z$ we have 
$$
\mathbb{V}\ni \lambda z+\mu (e\times z)+\nu (e^{\#}\times z)=\lambda \ssT(z)+(-\lambda +\mu e+\nu e^{\#})\times z,
$$
and since $\{1,e,e^{\#}\}$ is an $\FF$-basis of $\EE$ it follows that $f\times z\in\mathbb{V}$, and similarly $f\times z'\in\mathbb{V}$, for all $f\in\EE$. For $f\in\EE$ we have
\begin{align*}
f\times z&=-f^{\sigma^2}by-f^{\sigma}cy^2\\
f\times z'&=-\gamma f^{\sigma^2}cc^{\sigma^2}y-f^{\sigma}bb^{\sigma}y^2.
\end{align*}
Thus it is clear that if either $b=0$ or $c=0$ then $gy,gy^2\in\mathbb{V}$ for all $g\in\EE$, and hence $\mathbb{V}=\JJ$. So suppose that $b,c\neq 0$. Let $\lambda=\ssN(b)-\gamma\ssN(c)$, and note that $\lambda\in\FF$ with $\lambda\neq 0$ (as $\gamma\notin\ssN(\EE)$). Let $g\in\EE$ be arbitrary, and let $g_1=-\lambda^{-1}bb^{\sigma^2}g^{\sigma}$ and $g_2=-\lambda^{-1}c^{\sigma^2}g^{\sigma}$. Then
\begin{align*}
\mathbb{V}&\ni g_1\times z\,=\lambda^{-1}\ssN(b)gy+\lambda^{-1}bb^{\sigma}cg^{\sigma^2}y^2\\
\mathbb{V}&\ni g_2\times z'=\lambda^{-1}\gamma \ssN(c)gy+\lambda^{-1}bb^{\sigma}cg^{\sigma^2}y^2.
\end{align*}
Subtracting gives that $\lambda^{-1}(\ssN(b)-\gamma \ssN(c))gy=gy\in\mathbb{V}$ for all $g\in\EE$. Similarly we have $gy^2\in\mathbb{V}$ for all $g\in\EE$, and hence $\mathbb{V}=\EE$ as required. 
\end{proof}

\begin{cor}\label{cor:noovoidsH3}
No nontrivial collineation of a Moufang hexagon in class $(\mathrm{H3})$ fixes pointwise an ovoid. 
\end{cor}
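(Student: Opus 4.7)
Assume for contradiction that $\theta$ is a nontrivial collineation of a Moufang hexagon $\Gamma$ in class $(\mathrm{H3})$ pointwise fixing an ovoid. By Theorem \ref{thm:ovoids1} we may assume, after conjugation, that $\theta=hx_1(1)s_1$ where $h:\JJ\to\JJ$ is an hexagonal system automorphism of order $1$ or $3$ satisfying $\ssT(a)=a+a^h+a^{h^2}$ and $\ssT(a^\#)=\ssT(a,a^h)$ for all $a\in\JJ$, and such that the equation $z=1-z^{-h}$ has no solution in $\JJ\setminus\{0\}$. The underlying field $\FF$ has characteristic $\neq 3$ in class $(\mathrm{H3})$; applying the identity to any $a\in\JJ^h$ gives $3a=\ssT(a)\in\FF$, so $\JJ^h=\FF$ and therefore $h\neq 1$, forcing $h$ to have order exactly~$3$.

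The plan is to reduce everything to the case of type $9/\FF$ and then exhibit a nonzero $z\in\JJ$ with $z=1-z^{-h}$, contradicting Theorem \ref{thm:ovoids1}(3). For type $9\KK/\FF$, Lemma \ref{lem:extensionh} extends $h$ to the associated type $9/\KK$ system preserving the required properties, so it suffices to treat the $9/\FF$ case. For types $27/\FF$ and $27\KK/\FF$, I would locate an $h$-invariant hexagonal subsystem of type $9/\FF$ or $9\KK/\FF$ inside the Albert algebra; since $\JJ^h=\FF$ and any Albert algebra contains $9$-dimensional $\#$-closed cubic subalgebras, an averaging argument over the $h$-orbit of such a subalgebra should yield an $h$-stable one on which $h$ still satisfies the hypotheses of Theorem \ref{thm:ovoids1}.

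For the type $9/\FF$ case, write $\JJ=(\EE,\sigma,\gamma)$ as a cyclic division algebra of degree three. Since $\JJ^h=\FF$ and $\dim_{\FF}\JJ=9$, the kernel of $1+h+h^2$ is at least $8$-dimensional. I would first show that $h$ stabilises some cubic subfield $\EE'\subseteq\JJ$ and restricts there to a nontrivial element of $\mathrm{Gal}(\EE'/\FF)$; any $\#$-closed $3$-dimensional $\FF$-subspace of $\JJ$ containing $\FF$ is such a subfield, and Lemma \ref{lem:basis} together with the preservation of $\#$ by $h$ is used to produce one. Once this is established, choose any $a\in\EE'$ with $a^h\neq a$ and set $b=a-a^h\in\EE'$; then $b\neq 0$, $\ssT(b)=\ssT(a)-\ssT(a^h)=0$, and the elements $b,b^h,b^{h^2}$ lie in the commutative field $\EE'$ so commute pairwise. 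Setting $z_0=-b^{h^2}b^{-1}\in\EE'$, the computation
\[
z_0-1+z_0^{-h}=-b^{h^2}b^{-1}-1-b^hb^{-1}=-(b^{h^2}+b+b^h)b^{-1}=-\ssT(b)b^{-1}=0
\]
runs inside $\EE'$ exactly as in the proof of Corollary \ref{cor:noovoidsH2}, contradicting Theorem \ref{thm:ovoids1}(3).

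The main obstacle is establishing that $h$ stabilises a cubic subfield of $\JJ$; once inside such a subfield the noncommutativity of $\JJ$ becomes irrelevant and the $(\mathrm{H2})$ argument transfers verbatim. Producing this $h$-stable subfield is delicate because hexagonal system automorphisms of cyclic algebras need not be algebra (anti-)automorphisms a priori, and a careful basis analysis is required. The analogous construction of an $h$-invariant type $9$ subsystem inside an Albert algebra is the corresponding difficulty for types $27/\FF$ and $27\KK/\FF$.
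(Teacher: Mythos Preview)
Your outline has the right shape but leaves unresolved exactly the points you flag as obstacles, and in two places the paper's proof shows that your proposed route is either unnecessary or insufficient.

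\textbf{Type $9/\FF$.} You write that ``hexagonal system automorphisms of cyclic algebras need not be algebra (anti-)automorphisms a priori'' and therefore try to manufacture an $h$-stable cubic subfield. The paper avoids this entirely: by a classical result of Jacobson (norm-preserving $\FF$-linear bijections of a central simple algebra are algebra automorphisms or anti-automorphisms), $h$ \emph{is} an algebra automorphism or anti-automorphism of $\JJ$, and order~$3$ rules out the latter. Once $h$ is multiplicative, the element $z_0=-b^{h^2}b^{-1}$ satisfies $z_0-1+z_0^{-h}=-\ssT(b)b^{-1}=0$ directly in the noncommutative algebra $\JJ$; no commutative subfield is needed. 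Your ``careful basis analysis'' to find an $h$-stable $\EE'$ is both unproven and unnecessary.

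\textbf{Type $9\KK/\FF$.} Your reduction is flawed: extending $h$ to $\JJ_{\KK}$ and producing $z_0\in\JJ_{\KK}$ with $z_0=1-z_0^{-h}$ does \emph{not} contradict Theorem~\ref{thm:ovoids1}(3), which asserts nonexistence only in the original~$\JJ$. There is no reason the $z_0$ you build should land in~$\JJ$. The paper's argument is genuinely different: one extends $h$ via Lemma~\ref{lem:extensionh}, invokes Theorem~\ref{thm:allexamples} to conclude the extended $\theta$ is point-domestic on $\Gamma_{\KK}$, and then uses the already-established $9/\KK$ case (no ovoid fixed) together with Lemma~\ref{lem:axial} and Proposition~\ref{largesubh} to derive a contradiction from the fixed-element classification of Theorem~\ref{thm:PTM3}. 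This indirect route through Theorem~\ref{thm:allexamples} is the missing idea.

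\textbf{Types $27/\FF$ and $27\KK/\FF$.} ``Averaging over the $h$-orbit of a subalgebra'' is not a meaningful operation; subalgebras cannot be averaged, and the subalgebra generated by an $h$-orbit need not have dimension~$9$. The paper instead starts from a $3$-dimensional subsystem $\JJ'$ of type $(\mathrm{H2})$ (via \cite[(30.6)]{TW:02}), takes $a\in\JJ'\setminus\FF$, and if $a^h\notin\JJ'$ uses \cite[(30.17)]{TW:02} to enlarge to a $9$-dimensional subsystem $\JJ''$ containing both $a$ and $a^h$. The crucial point, supplied by Lemma~\ref{lem:basis}, is that $\JJ''$ is automatically spanned by $\#$- and $\times$-combinations of $a$ and $a^h$, hence is $h$-stable because $a^{h^2}=\ssT(a)-a-a^h$. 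This is the concrete mechanism replacing your vague averaging.

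A minor point: class $(\mathrm{H3})$ carries no characteristic restriction, so your assertion that $\kar\FF\neq 3$ is unjustified; however $h\neq 1$ follows anyway since $\ssT(a)=3a$ for all $a$ contradicts nondegeneracy of $\ssT(\cdot,\cdot)$.
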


\begin{proof}
Let $\Gamma$ be a Moufang hexagon in class (H3) with hexagonal system $(\JJ,\FF,\#)$. Suppose that $\theta$ is a nontrivial collineation pointwise fixing an ovoid. Then by Theorem~\ref{thm:ovoids1} we have, up to conjugation, $\theta=hx_1(1)s_1$ where $h:\JJ\to\JJ$ is an hexagonal system automorphism of order $1$ or $3$ with $\ssT(a)=a+a^h+a^{h^2}$ and $\ssT(a^{\#})=\ssT(a,a^h)$ for all $a\in\JJ$. We have $h\neq 1$ (for otherwise $\ssT(a)=3a$ for all $a\in\JJ$) and so $h$ has order~$3$.

By \cite[(30.6)]{TW:02} there is $\JJ'\leq \JJ$ such that $(\JJ',\FF,\#)$ is of class (H2). Let $a\in\JJ'\backslash\FF$, and so $\JJ'=\mathrm{span}_{\FF}\{1,a,a^{\#}\}$. If $a^h\in\JJ'$ then $\JJ'$ is stable under $h$ (as $a^{h^2}=\ssT(a)-a-a^h$), and so $\theta$ stabilises the Moufang hexagon $\Gamma'$ associated to $(\JJ',\FF,\#)$ and is point domestic fixing no lines of this hexagon, contradicting Corollary~\ref{cor:noovoidsH2}. Thus $a^h\notin\JJ'$. Then by \cite[(30.17)]{TW:02} there is a subspace $\JJ''\leq \JJ$ with $\JJ'\cup\{a^h\}\subseteq \JJ''$ such that $(\JJ'',\FF,\#)$ is an hexagonal system of type $9/\FF$ or $9\KK/\FF$ for some quadratic Galois extension $\KK/\FF$. It follows from Lemma~\ref{lem:basis} (along with the discussion just before Lemma~\ref{lem:extensionh} to extend a $9\KK/\FF$ system to a $9/\KK$ system) that the elements $1,a,a^{\#},a^h,a^{h\#},a\times a^h,a\times a^{h\#},a^{\#}\times a^h,a^{\#}\times a^{h\#}$ form a basis of $\JJ''$. Hence $\JJ''$ is stable under $h$, and so the Moufang hexagon $\Gamma''$ associated to $(\JJ'',\FF,\#)$ is stabilised by $\theta$. Moreover, since $\theta$ is point-domestic and fixes no lines of $\Gamma$ then $\theta$ restricted to $\Gamma''$ also has these properties, and so $\theta$ fixes an ovoid of $\Gamma''$. 

Thus it is sufficient to eliminate the possibility of a nontrivial collineation $\theta$ of a Moufang hexagon of type $9/\FF$ or $9\KK/\FF$ fixing pointwise an ovoid. Consider first the $9/\FF$ case. Let $(\JJ,\FF,\#)$ be of type $9/\FF$, and so $\JJ$ is a noncommutative cyclic division algebra of degree three with centre~$\FF$. By~\cite[p.2]{Jac:68} the vector space automorphism $h:\JJ\to\JJ$ is either an algebra automorphism, or an algebra anti-automorphism. The latter case is impossible as $h$ has order~$3$, and hence $h$ is an algebra automorphism. Following the argument of Corollary~\ref{cor:noovoidsH2} there is $b\in\JJ\backslash\{0\}$ with $\ssT(b)=0$. Then $z_0=-b^{h^2}b^{-1}$ satisfies $z_0-1+z_0^{-h}=-\ssT(b)b^{-1}=0$, contradicting Theorem~\ref{thm:ovoids1}(3). 

We now consider the $9\KK/\FF$ case. We cannot directly use the argument of the previous paragraph, because hexagonal systems of type $9\KK/\FF$ lack the algebraic structure required to form the element $z_0=-b^{h^2}b^{-1}$. Instead we argue as follows. Let $\Gamma$ be a Moufang hexagon with hexagonal system $(\JJ,\FF,\#)$ of type $9\KK/\FF$. If $\Gamma$ admits a point-domestic collineation fixing an ovoid then by Theorem~\ref{thm:ovoids1} there is an hexagonal system automorphism $h:\JJ\to\JJ$ of order $3$ such that $\ssT(a)=a+a^h+a^{h^2}$ and $\ssT(a^{\#})=\ssT(a,a^h)$ for all $a\in\JJ$. By Lemma~\ref{lem:extensionh} the map $h$ extends to an automorphism of the hexagonal system $(\JJ_{\KK},\KK,\#)$ of type $9/\KK$ such that $h:\JJ_{\KK}\to\JJ_{\KK}$ has order~$3$ and satisfies $\ssT(\alpha)=\alpha+\alpha^h+\alpha^{h^2}$ and $\ssT(\alpha^{\#})=\ssT(\alpha,\alpha^h)$ for all $\alpha\in\JJ_{\KK}$. But then by Theorem~\ref{thm:allexamples} the extended automorphism $\theta=hx_1(1)s_1$ of the associated hexagon $\Gamma_{\KK}$ of type $9/\KK$ is point-domestic. Since $\theta$ does not fix an ovoid of $\Gamma_{\KK}$ (by the previous paragraph) it must fix a large full subhexagon of $\Gamma_{\KK}$ (by Theorem~\ref{thm:PTM3} and Lemma~\ref{lem:axial}), contradicting Proposition~\ref{largesubh}. 
\end{proof}
%
%\begin{remark}
%If $(\JJ,\FF,\#)$ is an hexagonal system of class (H3) then there does not exist an hexagonal system automorphism $h:\JJ\to\JJ$ satisfying $h^3=1$, $\ssT(\alpha)=\alpha+\alpha^h+\alpha^{h^2}$ and $\ssT(\alpha^{\#})=\ssT(\alpha,\alpha^h)$ for all $\alpha\in\JJ$ (for if such $h$ existed then $\theta=hx_1(1)s_1$ would be point-domestic by Theorem~\ref{thm:allexamples}, contrary to Lemma~\ref{lem:axial}, Theorem~\ref{thm:subhex}, and Corollary~\ref{cor:noovoidsH3}). We note that there is sometimes an hexagonal system automorphism $h:\JJ\to\JJ$ satisfying only $h^3=1$ and $\ssT(\alpha)=\alpha+\alpha^h+\alpha^{h^2}$. For example, in a $9/\FF$ system $(\JJ,\FF,\#)$ with $\JJ=(\EE,\sigma,\gamma)$ if there exists $t\in\FF\backslash\{1\}$ with $t^2+t+1=0$ then $(a+by+cy^2)^h=a^{\sigma}+b^{\sigma}ty+c^{\sigma}t^2y^2$ is such an automorphism. However this automorphism does not satisfy $\ssT(\alpha^{\#})=\ssT(\alpha,\alpha^h)$ for all $\alpha\in\JJ$ (this is easily seen directly by reducing it to the fact that in a cubic Galois extension $\EE/\FF$ with $\kar\FF\neq 3$ there exists $a\in\EE$ with $\ssT(a^{\#})\neq\ssT(a^2)$).
%\end{remark}

\subsection{Proof of the main theorem}\label{sec:main}

We now have all ingredients for the proof of Theorem~\ref{thm:main}.

\begin{proof}[Proof of Theorem~\ref{thm:main}]
Let $\Gamma$ be a Moufang hexagon (with Convention~\ref{conv:duality} in force). Suppose that $\theta$ is a nontrivial line-domestic collineation. By Theorem~\ref{thm:PTM3} the fixed element structure of~$\theta$ is either (i) a ball of radius $3$ centred at a point, (ii) a large ideal subhexagon, or (iii) a spread. Case (ii) is eliminated by Theorem~\ref{thm:subhex}, and case (iii) is eliminated by Corollary~\ref{cor:spread}. By Lemma~\ref{lem:central} there is a unique class of collineations in case (i).

Suppose now that $\theta$ is a nontrivial point-domestic collineation. By Theorem~\ref{thm:PTM3} the fixed element structure of~$\theta$ is either (i) a ball of radius $3$ centred at a line, (ii) a large full subhexagon, or (iii) an ovoid. Lemma~\ref{lem:axial} deals with case (i), and Theorem~\ref{thm:subhex} deals with case (ii). Case (iii) is dealt with by Corollaries~\ref{cor:spread}, \ref{cor:noovoidsH2} and~\ref{cor:noovoidsH3}.

Finally, the statements on exceptional domestic collineations follow from Theorems~\ref{thm:TMPex1} and~\ref{thm:TMPex2}.
\end{proof}

\subsection{Concluding comments}\label{sec:extra1}

We conclude by providing an independent geometric proof of Corollary~\ref{cor:noovoidsH2}, and a uniform description of all examples of point-domestic collineations of Moufang hexagons. 

Recall from \cite{Ron:80} (see also \cite[Theorem~6.3.2]{HVM:98}) that all points and all lines of a Moufang hexagon $\Gamma$ are distance-$3$-regular. Suppose that $p,q\in\Gamma$ are opposite points, and consider the set $L(p,q)$ of all lines that are at distance $3$ in the incidence graph from both $p$ and $q$. The \textit{imaginary line} determined by $p,q$ is 
$$
I(p,q)=\{r\in\cP\mid d(L,r)=3\text{ for all $L\in L(p,q)$}\},
$$
where $d(L,r)$ denotes distance in the incidence graph. By distance-$3$-regularity the set $I(p,q)$ is determined by any two lines $L,L'\in L(p,q)$ with $L\neq L'$. 

\begin{prop}\label{ovoidsgen}
Let $\Gamma$ be a Moufang hexagon and suppose $\theta$ is a domestic collineation only fixing points (hence the fixed point structure is an ovoid~$O$). Then 
\begin{compactenum}[$(i)$] 
\item $\theta$ has order $3$, 
\item there exists a full dual split Cayley subhexagon stabilised by $\theta$, 
\item every full subhexagon stabilised under $\theta$ contains an ovoid fixed by $\theta$, \item $O$ is closed under taking imaginary lines.
\end{compactenum}
\end{prop}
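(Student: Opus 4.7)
The plan is to reduce (i)--(iii) to Theorems~\ref{thm:ovoids1} and~\ref{thm:PTM3}, and to handle (iv) via the Moufang-set structure carried by imaginary lines. For (i), Theorem~\ref{thm:ovoids1}(1) states that any such $\theta$ is conjugate to $hx_1(1)s_1$ with $h$ an hexagonal system automorphism, and that $\theta^3=1$; since $\theta$ is nontrivial, its order is exactly~$3$. For (ii), with $\theta$ in this conjugated form, the subspace $\FF\cdot 1\subseteq \JJ$ is $\#$-closed (as $(t\cdot 1)^{\#}=t^2\cdot 1$) and fixed pointwise by $h$ (as $h$ is $\FF$-linear with $1^h=1$). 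The associated sub-hexagonal system belongs to class~(H1) and so defines a full dual split Cayley subhexagon $\Gamma'$, and $\Gamma'$ is $\theta$-stable because $\theta$ is assembled from $h$, $x_1(1)$, and $s_1(1)=x_7(1)x_1(1)x_7(1)$, each of which preserves $\Gamma'$.

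For (iii), let $\Gamma''$ be a $\theta$-stable full subhexagon. The restriction $\theta|_{\Gamma''}$ is point-domestic on $\Gamma''$ (opposition in a full subhexagon coincides with ambient opposition), fixes no line (since $\theta$ does not), and is nontrivial: if $\theta|_{\Gamma''}$ were the identity then $\Gamma''\subseteq O$, contradicting that $O$ consists of pairwise opposite points while $\Gamma''$ contains collinear point pairs. Theorem~\ref{thm:PTM3} then forces the fixed element structure of $\theta|_{\Gamma''}$ to be an ovoid of $\Gamma''$, as the remaining two options (a ball of radius~$3$ at a line, or a large full subhexagon) both contain fixed lines.

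Part (iv) is the crux. Fix distinct $p,q\in O$ and $r\in I(p,q)$. Since $\theta$ fixes $p$ and $q$ it permutes both $L(p,q)$ and $I(p,q)$, and having no fixed lines it acts on $L(p,q)$ with every orbit of size~$3$. Using (ii) together with the transitivity of the collineation group on ordered pairs of opposite points in $O$, one selects a full dual split Cayley subhexagon $\Gamma'$ containing $p$ and $q$ and stabilised by $\theta$. As $\Gamma$ and $\Gamma'$ share the point-row parameter $s=|\FF|$, and each imaginary line has size $s+1$ by distance-$3$-regularity, $I(p,q)$ coincides with its $\Gamma'$-counterpart and is contained in $\Gamma'$. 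By (iii), $O\cap\Gamma'$ is an ovoid of the dual split Cayley hexagon $\Gamma'$. In $\Gamma'$ the imaginary line $I(p,q)$ has the structure of $\mathrm{PG}(1,\FF)$, on which the pointwise stabiliser of the pair $\{p,q\}$ in the little projective group acts as $\FF^\times$ by scalar multiplication. A nontrivial order-$3$ element of this torus would be a primitive cube root of unity in $\FF$; the task is to preclude this, for example by arguing that such an action would lift to a fixed line of $\theta$ in $\Gamma'$ contradicting the assumption that $\theta$ fixes no line. This forces $\theta|_{I(p,q)}=\mathrm{id}$, whence $I(p,q)\subseteq O$.

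The main difficulty is making the Moufang-set step in (iv) both uniform and precise: one must justify the existence of a $\theta$-stable full dual split Cayley subhexagon through an arbitrary pair of points of $O$, identify $I(p,q)$ with $\mathrm{PG}(1,\FF)$ inside it, and rule out nontrivial order-$3$ elements of the torus. Once (iv) is in place it powers the geometric proof of Corollary~\ref{cor:noovoidsH2}: a hypothetical $\theta$ in class~(H2) yields, through (ii) and (iii), an ovoid of a dual split Cayley subhexagon that must be closed under the imaginary lines inherited from the ambient hexagon by (iv), and this rigidity is incompatible with the classification of ovoids in split Cayley hexagons in the relevant cases.
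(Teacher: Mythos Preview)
Your arguments for (i)--(iii) are correct but take a genuinely different route from the paper. The paper's proof is deliberately geometric---this proposition sits in a section whose stated purpose is to give an independent geometric proof of Corollary~\ref{cor:noovoidsH2}, avoiding the algebraic machinery of Theorem~\ref{thm:ovoids1}. You instead lean directly on Theorem~\ref{thm:ovoids1} for (i) and (ii), and on Theorem~\ref{thm:PTM3} for (iii). This is logically valid (Theorem~\ref{thm:ovoids1} is proved independently), but it undercuts the purpose of the section. The paper instead constructs, for an \emph{arbitrary} pair $p,q\in O$, a $\theta$-stable dual split Cayley subhexagon through $p$ and $q$ by a direct incidence-geometric argument; (i) is then deduced from \cite[Theorem~6.10]{PVM:21} applied inside that subhexagon, and (iii) from the elementary observation that the unique ovoid point collinear to any $x\in\Gamma'$ must lie in~$\Gamma'$. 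One concrete advantage of the paper's (ii) over yours is that it produces a $\theta$-stable subhexagon through any prescribed $p,q\in O$; your version only yields one through the base pair $(\infty),w_0P_1$, and your appeal to ``transitivity on ordered pairs of opposite points in $O$'' to transport it is not quite right---you would need the transporting element to centralise~$\theta$, which is not automatic.

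Your part (iv) has a genuine gap, and you essentially acknowledge it. The paper's argument is short: by (ii) and (iii) the restriction of $O$ to a $\theta$-stable dual split Cayley subhexagon $\Gamma'$ through $p,q$ is an ovoid fixed by a domestic collineation, hence Hermitian by \cite[Theorem~6.10]{PVM:21}, and Hermitian ovoids are closed under imaginary lines. Your alternative---identify $I(p,q)$ with $\mathrm{PG}(1,\FF)$, argue that $\theta$ acts through the torus $\FF^\times$, and exclude primitive cube roots of unity---is not completed. Two specific issues: first, you have not justified that $\theta$ acts on $I(p,q)$ via $\mathrm{PGL}_2(\FF)$ rather than $\mathrm{P\Gamma L}_2(\FF)$ (the hexagonal system automorphism $h$ in your normalisation is $\FF$-linear, but a general collineation need not induce a linear map on imaginary lines); second, your proposed mechanism for excluding cube roots of unity (``such an action would lift to a fixed line'') is not substantiated. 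The actual reason no cube root appears is precisely the dichotomy of \cite[Theorem~6.10]{PVM:21}: when $X^2+X+1$ splits over $\FF$ the fixed structure in $\Gamma'$ is a subhexagon, not an ovoid. So you end up needing the same external input the paper uses, but via a more circuitous path.
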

\begin{proof}
Let $p,q$ be two points of the ovoid $O$ (the fixed point set of $\theta$). Let $p*L_i*r_i*M_i*s_i*K_i*q$, $i=1,2$, be two distinct paths joining $p$ with $q$, with $L_1^\theta=L_2$. Select $r_1'\in L_1\setminus\{p,r_1\}$ and let $s_1'$ be the unique point collinear to $r_1'$ and at distance 3 from $K_2$. Then $s_1'$ is contained in the unique full nonthick subhexagon $\Gamma''$ defined by $L_1$ and $K_2$. Let $r\in O$ be collinear to $s_1'$. Then $r\notin\Gamma''$ since $r$ is opposite $p$ and $q$. So $rs_1'$ does not belong to $\Gamma''$ and hence the unique line $M_2'$ through $r_2$ at distance $4$ from $rs_1'$ is distinct from $M_2$ and from $L_2$. Hence $r_2,s_1$, $M_2,M_2',L_2$ are contained in a unique full dual split Cayley subhexagon $\Gamma'$. The latter contains $r_1'$ hence $s_1'$ hence $rs_1'$ hence $r$. Now the lines $L_2,M_2$ and $K_2$ are contained in $\Gamma'\cap\Gamma'^\theta$, and so is the point~$r$. It follows that the shortest path from $r$ to $K_2$ is contained in $\Gamma'\cap\Gamma'^\theta$, and hence $s_1'$ is also contained in it. Then also $r_1',L_1,M_1,K_1,M_2'\in\Gamma'\cap\Gamma'^\theta$. Hence $\Gamma'\subseteq\Gamma'\cap\Gamma'^\theta$, and we conclude that $\Gamma'^\theta=\Gamma'$. This shows $(ii)$. 

Now, by \cite[Theorem~6.10]{PVM:21} we know that $\theta^3$ fixes $\Gamma'$ pointwise. As in the last line of the proof of Theorem~\ref{regnocol}, we conclude that $\theta^3$ is the identity, proving~$(i)$.

Now let $\Gamma'$ be any  full subhexagon and let $x$ be any point in it, not contained in $O$. Let $a\in O$ be collinear to $x$.  Then $x^\theta$ is collinear to $a$, and it readily follows that $a\in\Gamma'$ (since either $x^\theta$ lies on $xa$, and then by fullness, $a\in xx^\theta\subseteq\Gamma'$, or not, and then $a$ is the unique point collinear to both $x$ and $x^\theta$). Hence $O\cap \Gamma'$ is an ovoid of $\Gamma'$. This proves $(iii)$.

Now $(iv)$ follows from the fact that every ovoid in a dual split Cayley hexagon fixed by a domestic collineation is a Hermitian ovoid and hence closed under taking imaginary lines. 
\end{proof}

\begin{proof}[Geometric proof of Corollary~\ref{cor:noovoidsH2}]
Let $\Gamma$ be of class (H2). Suppose that $\theta$ is a collineation of $\Gamma$ fixing precisely an ovoid~$O$. By Proposition~\ref{ovoidsgen}$(ii)$ and $(iii)$, there exists a full proper dual split Cayley subhexagon $\Gamma'$ stabilised by $\theta$ and such that $O'=\Gamma'\cap O$ is an ovoid of $\Gamma'$. Let $p\in O\setminus O'$. Then, by Proposition~\ref{largesubh} there is a unique point $t\in\Gamma$ collinear to $p$. Since both $p$ and $\Gamma'$ are stabilised by $\theta$, so too is $t$, a contradiction.  
\end{proof}

\goodbreak

The following Corollary shows that all point-domestic collineations can be uniformly described using the setup of Theorem~\ref{thm:allexamples}.

\begin{cor}\label{cor:allexamples} Let $\Gamma$ be a Moufang hexagon with hexagonal system $(\JJ,\FF,\#)$. Let $\theta=hx_1(1)s_1$ with $h$ an automorphism of $(\JJ,\FF,\#)$.
\begin{compactenum}[$(1)$]
\item If $\Gamma$ is of class~$(\mathrm{H1})$ and $h=1$ then $\theta$ is point-domestic. Moreover
\begin{compactenum}[$(a)$]
\item if $X^2+X+1$ is irreducible over $\FF$ then $\theta$ fixes an ovoid;
\item if $X^2+X+1$ is not irreducible over $\FF$ then $\theta$ fixes a large full subhexagon.
\end{compactenum}
\item If $\Gamma$ is of class~$(\mathrm{H4})$ and $h=1$ then $\theta$ is point-domestic and fixes a ball of radius $3$ in the incidence graph centred at a line.
\item If $\Gamma$ is a triality hexagon of type ${^3}\sD_4$ and $h=\sigma$ is a nontrivial element of the Galois group then $\theta$ is point-domestic and fixes a large full subhexagon. 
\end{compactenum}
This gives the complete list of point-domestic collineations of Moufang hexagons.
\end{cor}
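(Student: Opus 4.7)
The approach is to combine Theorem~\ref{thm:allexamples} (which gives a sufficient condition for $\theta=hx_1(1)s_1$ to be point-domestic) with the trichotomy of fixed structures from Theorem~\ref{thm:PTM3}, and then appeal to Theorem~\ref{thm:main} for completeness. The plan is to run, for each of (1), (2), (3), a two-step routine: first verify the hypotheses of Theorem~\ref{thm:allexamples}, and second identify which of the three admissible fixed structures must occur by eliminating the other two using the results of Sections~\ref{sec:elations}--\ref{sec:ovoids}.

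For the first step, in case (1) one has $\JJ=\FF$, $a^{\#}=a^{2}$, $\ssT(a)=3a$, $\ssT(a,b)=3ab$; with $h=1$ both required identities $\ssT(a)=a+a^{h}+a^{h^{2}}=3a$ and $\ssT(a^{\#})=\ssT(a,a^{h})=3a^{2}$ are immediate. In case (2), $\kar\FF=3$ forces $\ssT\equiv 0$, so the identities hold trivially. In case (3), the identities for $h=\sigma$ are standard consequences of the cubic Galois definition $a^{\#}=a^{\sigma}a^{\sigma^{2}}$, $\ssT(a)=a+a^{\sigma}+a^{\sigma^{2}}$, $\ssT(a,b)=ab+a^{\sigma}b^{\sigma}+a^{\sigma^{2}}b^{\sigma^{2}}$, using $\sigma^{3}=1$. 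In all three cases Theorem~\ref{thm:allexamples} then gives that $\theta$ is point-domestic.

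For the second step, Theorem~\ref{thm:PTM3} reduces the fixed structure to one of: a ball of radius $3$ at a line, a large full subhexagon, or an ovoid. In case (2) (class~(H4)), Corollary~\ref{cor:spread} rules out an ovoid and Lemma~\ref{lem:mixedsubhex} rules out a large full subhexagon, so the fixed structure is a ball of radius $3$ at a line, and by Lemma~\ref{lem:axial} $\theta$ is conjugate to $x_{3}(1)$, which is a short root elation. In case (3) (triality ${}^{3}\sD_{4}$), Lemma~\ref{lem:axial} rules out a ball at a line (the hexagon is not of class (H4)), while Corollary~\ref{cor:noovoidsH2} rules out an ovoid, leaving a large full subhexagon. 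In case (1) (class (H1)), Theorem~\ref{thm:main}(2)(b) says that for the given field $\FF$ there is a unique conjugacy class of point-domestic collineations, which fixes an ovoid or a large full subhexagon according to whether $X^{2}+X+1$ is irreducible or reducible over $\FF$; since our $\theta$ lies in that class, the sub-cases (a) and (b) follow.

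Completeness is then immediate from Theorem~\ref{thm:main}(2), which classifies every point-domestic collineation of a Moufang hexagon (up to conjugation) as belonging to exactly one of three types: (a) a short root elation on a mixed or characteristic $3$ dual split Cayley hexagon, (b) an ovoid- or subhexagon-fixing collineation on a characteristic $\neq 3$ dual split Cayley hexagon, and (c) a Galois-type collineation on a ${}^{3}\sD_{4}$ hexagon. Cases (2), (1), (3) of the corollary supply an explicit representative in each type, and since each type is a single conjugacy class the list is exhaustive. The most delicate ingredient here is the ovoid-exclusion in case (3) via Corollary~\ref{cor:noovoidsH2} and the uniqueness-of-class statement in Theorem~\ref{thm:main}; everything else is a routine unpacking of the formulae recorded in Section~\ref{sec:Moufang}.
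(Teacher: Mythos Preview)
Your proof is correct and follows essentially the same line as the paper's: verify the hypotheses of Theorem~\ref{thm:allexamples} in each case, then use Theorem~\ref{thm:PTM3} together with the elimination results of Sections~\ref{sec:elations}--\ref{sec:ovoids} to pin down the fixed structure, and finally invoke Theorem~\ref{thm:main} for completeness.

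The one place where you and the paper diverge is case~(3). You rule out the ovoid possibility by citing Corollary~\ref{cor:noovoidsH2}, which is perfectly valid. The paper instead argues directly that $\theta$ fixes a chamber: computing $\theta\cdot x_1(z)s_1B=x_1(1-z^{-\sigma})s_1B$, it exhibits an explicit solution $z=-b^{\sigma^2}b^{-1}$ (with $b=a-a^{\sigma}$ for any $a$ not fixed by $\sigma$) of $z=1-z^{-\sigma}$, so a chamber is fixed and hence the fixed structure cannot be an ovoid. Your route is cleaner as a deduction from what has already been proved; the paper's route is more self-contained and, as a bonus, supplies the ingredient used in the remark immediately following the corollary (that $\theta=\sigma x_1(1)s_1$ is conjugate to~$\sigma$).
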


\begin{proof}
In each case the given element $h$ satisfies the conditions of Theorem~\ref{thm:allexamples}, and hence $\theta=hx_1(1)s_1$ is point-domestic. Claim (1) follows from \cite[Theorem~6.10]{PVM:21} (it is also easy to check directly because $\theta$ fixes a chamber if and only if $X^2+X+1$ is not irreducible over $\FF$). Claim (2) follows because we have already shown that for hexagons in class (H4) it is impossible to fix an ovoid or a large full subhexagon, leaving only a ball of radius $3$ centred at a line remaining. For claim (3), the case of a ball of radius $3$ centred at a line has already been eliminated, and so it suffices to show that there is a chamber fixed. Let $gB=x_1(z)s_1B$ with $z\neq 0$. We have (as in Corollary~\ref{cor:noovoidsH2})
$$
\theta gB=\sigma x_1(1)s_1x_1(z)s_1B=\sigma x_1(1)x_7(z)B=\sigma x_1(1)x_1(-z^{-1})s_1B=x_1(1-z^{-\sigma})s_1B.
$$
Thus the chamber $gB$ is fixed if and only if $z=1-z^{-\sigma}$, or equivalently $zz^{\sigma}-z^{\sigma}+1=0$. Choose any $a\in\JJ$ (a cubic Galois extension of $\FF$) with $a^{\sigma}\neq a$ and let $b=a-a^{\sigma}$. Then $b\neq 0$ with $\ssT(b)=0$. Then the element $z=-b^{\sigma^2}b^{-1}$ satisfies $zz^{\sigma}-z^{\sigma}+1=b^{\sigma^2}b^{-\sigma}+bb^{-\sigma}+1=\ssT(b)b^{-\sigma}=0$.

The fact that the list of examples is complete is a consequence of Theorem~\ref{thm:main}.
\end{proof}

\begin{remark}
In case (3) of Corollary~\ref{cor:allexamples} it is easy to see that the element $\theta=\sigma x_1(1)s_1$ is conjugate to $\sigma$ (as expected by Theorem~\ref{thm:subhex}). Since the fixed element set of $\sigma$ is clearly the dual split Cayley subhexagon, and since Theorem~\ref{thm:allexamples} implies $\sigma$ is domestic, it follows from Theorem~\ref{thm:PTM3} that the dual split Cayley hexagon is large in the ${^3}\sD_4$ triality hexagon (giving an independent proof of the ``if'' direction of Proposition~\ref{largesubh}).
\end{remark}

%\taubleofcontents

\bibliographystyle{plain}
%\bibliography{/Users/jamesp/Dropbox/Research/bibtex/Parkinson}
%\bibliography{/Users/jwparkinson/Dropbox/Research/bibtex/Parkinson}

%\end{document}

\end{document}